\documentclass[10pt]{article}

\usepackage[a4paper,margin=.879in,footskip=0.25in]{geometry}
%Document Format
%\setlength{\textheight}{195truemm}
%\setlength{\textwidth}{165truemm}
%\setlength{\headheight}{14.0truemm}
%\setlength{\hoffset}{-20truemm} 
%\setlength{\voffset}{-20truemm} 

%Packages
%\usepackage[margin=1in]{geometry}
\usepackage{amsmath,amsthm} 
\usepackage{amssymb}
\usepackage[all]{xypic} %For some commutative diagrams
\usepackage{amsbsy}
\usepackage{graphicx}  %To import figures, rotate   %and for scalebox
\usepackage{subfigure} %To format figures side by side
\usepackage[all,cmtip]{xy} %For matrices in xymatrix
\usepackage{multirow}  %For table labels
\usepackage{multicol}
\usepackage{rotating} %for sideways tables
\usepackage{comment} %to comment large sections
\usepackage{tikz} %for graphics
\usetikzlibrary{arrows, arrows.meta, decorations.markings, decorations.pathmorphing, shapes} 
\usepackage{relsize} %For making math symbols smaller
\usepackage{hyperref} %For email at the end
\usepackage{xcolor} %For colored text
\usepackage{lscape} %To turn some pages landscape
\usepackage{manfnt} %for the dbend  
\usepackage{fancyvrb}  %for fancy verbatim
 
%Numbering
\numberwithin{equation}{section}

%Math Format
\newtheorem{thm}{Theorem}[section]
\newtheorem{cor}[thm]{Corollary}

\newtheorem{lem}[thm]{Lemma}
\newtheorem{prop}[thm]{Proposition}
\theoremstyle{definition} 
\newtheorem{defn}[thm]{Definition}
\newtheorem{rem}[thm]{Remark}
\newtheorem{exam}[thm]{Example}

%New commands

%\newcommand{\cR}{\mathcal{R}} 

\newcommand{\even}{\text{even}}

\begin{document}

\title{Multiplicative preprojective algebras of Dynkin quivers}
\author{Daniel Kaplan}
\date{}

\maketitle

\begin{abstract}
For a commutative ring $R$ and an ADE Dynkin quiver $Q$, we
prove that the multiplicative preprojective algebra of Crawley-Boevey and
Shaw, with parameter $q=1$, is isomorphic to the (additive) preprojective
algebra as $R$-algebras if and only if the bad primes for $Q$ -- 2 in type
D, 2 and 3 for $Q = E_{6}$, $E_{7}$ and 2, 3 and 5 for $Q= E_{8}$ -- are
invertible in $R$. We construct an explicit isomorphism over
$\mathbb{Z}[1/2]$ in type D, over $\mathbb{Z}[1/2, 1/3]$ for
$Q = E_{6}$, $E_{7}$ and over $\mathbb{Z}[1/2, 1/3, 1/5]$ for
$Q=E_{8}$. Conversely, if some bad prime is not invertible in $R$, we show
that the additive and multiplicative preprojective algebras differ in zeroth
Hochschild homology, and hence are not isomorphic. In fact, one only needs
the vanishing of certain classes in zeroth Hochschild homology of the multiplicative
preprojective algebra, utilizing a rigidification argument for isomorphisms
that may be of independent interest.

In the setting of Ginzburg dg-algebras, our obstructions are new in type
E and give a more elementary proof of the negative result of Etg\"{u}--Lekili
\cite[Theorem 13]{EL17} in type D. Moreover, the zeroth Hochschild homology
of the multiplicative preprojective algebra, computed in Section~\ref{sec:_HH_0}, can be interpreted as the space of unobstructed deformations
of the multiplicative Ginzburg dg-algebra by Van den Bergh duality. Finally,
we observe that the multiplicative preprojective algebra
is not symmetric Frobenius if $Q \neq A_1$, a departure from the additive preprojective algebra in characteristic 2 for
$Q = D_{2n}$, $n \geq 2$ and $Q =E_{7}$, $E_{8}$.
\end{abstract}

\section{Introduction}
\label{sec1}

Fix a quiver $Q$, a field $k$, and write $\text{char}(k)$ for the characteristic
of $k$. Crawley-Boevey and Shaw defined the multiplicative preprojective
algebra of $Q$ over $k$, denoted $\Lambda _{k}(Q)$, to study Kac's middle
convolution operation and the Deligne--Simpson problem in
\cite{CBS06}. These algebras are a multiplicative version of the preprojective
algebra, denoted $\Pi _{k}(Q)$, as can be made precise using a moment-map
interpretation of the relations \cite[Theorem 6.7.1]{VdB_Double}. Yamakawa
developed the theory of multiplicative quiver varieties by looking at spaces
of semistable representations of the multiplicative preprojective algebra,
in analogy to Nakajima's quiver varieties for preprojective algebras
\cite{Yamakawa08}. Furthermore, the differential-graded multiplicative
preprojective algebra has recently arisen in the study of certain wrapped
Fukaya categories \cite{EL17,EL19} and microlocal sheaves
\cite{BK16}. Despite this attention in representation theory and geometry,
the multiplicative preprojective algebra is less understood algebraically
than the preprojective algebra. Hence the aim of this paper is to establish
elementary properties of multiplicative preprojective algebras, as part
of a larger program: \cite{CBS06}, \cite{Shaw05}, \cite{KS20}.

If $Q'$ is a quiver obtained from the quiver $Q$ by changing the orientation
of a single arrow, then $\Lambda _{k}(Q) \cong \Lambda _{k}(Q')$. Hence
the behavior the multiplicative preprojective algebra only depends on the
graph underlying $Q$. In fact, the behavior of these algebras depend heavily
on the trichotomy of the underlying graph into (1) simply-laced Dynkin
diagrams, (2) simply-laced extended Dynkin diagrams, and (3) others. In
this article, we restrict to the case of simply-laced Dynkin diagrams:
two infinite families $\{ A_{n} \}_{n \in \mathbb{N}}$,
$\{ D_{n} \}_{n \geq 4}$ and three exceptions $E_{6}, E_{7}, E_{8}$, see
Fig.~\ref{fig:Dynkin_diagrams}.
%
%f1 #&#
\begin{figure}
\begin{equation*}
\fontsize{10.99}{10}\selectfont
\xymatrix@R=1em{ & & & & 1 \ar[rd]^a & & & & \\
A_{n} = 1 \ar[r]^-{a_{1}} & 2 \ar[r]^-{a_{2}} & \cdots \ar[r]^{a_{n-1}} & n & D_{n} = & 3 & 4 \ar[l]_{c_{1}} & \ar [l]_{c_{2}} \cdots & \ar [l]_{c_{n-3}} n \\
& & & & 2 \ar[ru]_b & & & &}
\end{equation*}
\begin{equation*}\fontsize{10.99}{10}\selectfont
\begin{array}{l}E_{6, 7, 8} =
\xymatrix{ 1 \ar[r]^d & 2 \ar[r]^b & 3 & 5 \ar[l]_c & 6 \ar[l]_e & 7 \ar[l]_f & 8 \ar[l]_g \\ & & 4 \ar[u]^a & & & & }
\end{array}
\end{equation*}\vspace{-5pt}

\caption{Quivers corresponding to simply-laced Dynkin diagrams with a labelling
of the vertices and arrows and the arrows oriented towards the central vertex.}
\label{fig:Dynkin_diagrams}
\end{figure}
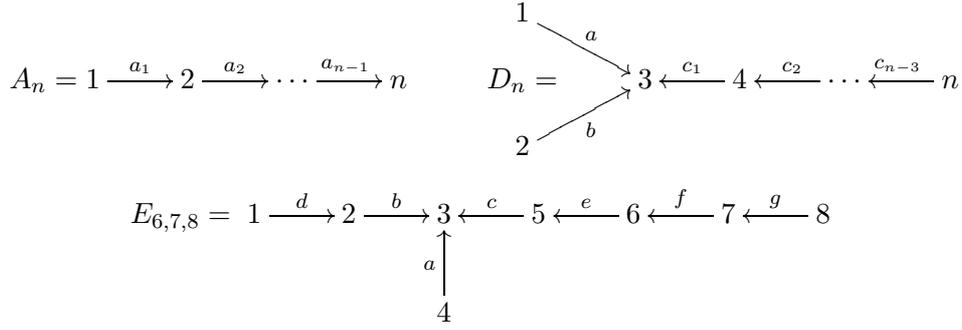

Crawley-Boevey non-constructively proved that
$\Lambda _{k}(Q) \cong \Pi _{k}(Q)$ for $Q$ ADE, over
$k= \mathbb{C}$ in \cite[Corollary 1]{CB13}. However, Shaw proved that
$\Lambda _{k}(D_{4}) \ncong \Pi _{k}(D_{4})$ if
$\text{char}(k)=2$, \cite[Lemma 5.2.1]{Shaw05}. This leaves the natural
question:
\begin{itemize}
\item[]
\textbf{For which $(Q, k)$, with $Q$ an ADE quiver and $k$ a field, is
$\Lambda _{k}(Q) \cong \Pi _{k}(Q)$?}
\end{itemize}

Malkin, Ostrik, and Vybornov in \cite{MOV06} give the following notion:
%
%d1.1 #&#
\begin{defn}
%%LEAP%%%\label{defn1.1}
\label{def:_bad_prime}
Define $p$ to be a \emph{bad} prime for the quiver $Q$ if:
\begin{equation*}
\begin{cases}
p = 2 & \text{for } Q = D_{n}, \ n\geq 4
\\
p = 2, 3 & \text{for } Q= E_{6}, \hspace{.05cm} E_{7}
\\
p = 2, 3, 5 & \text{for } Q=E_{8}.
\end{cases}
\end{equation*}
Define $p$ to be \emph{good} for the quiver $Q$ otherwise.
\end{defn}

%t1.2 #&#
\begin{thm}
%%LEAP%%%\label{thm1.2}
\label{thm:_main}
Let $Q$ be an ADE Dynkin. $\Lambda _{k}(Q) \cong \Pi _{k}(Q)$ if and only
if $\text{char}(k)$ is good for $Q$.
\end{thm}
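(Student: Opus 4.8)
The plan is to prove the two implications separately and, for the forward direction, to package the construction integrally. For ``$\ch(k)$ good $\Rightarrow \Lambda_k(Q) \cong \Pi_k(Q)$'', I would first reduce to exhibiting an isomorphism of $R$-algebras $\Lambda_R(Q) \cong \Pi_R(Q)$, where $R = \bZ$ in type $A$, $R = \bZ[1/2]$ in type $D$, $R = \bZ[1/2,1/3]$ for $Q = E_6,E_7$, and $R = \bZ[1/2,1/3,1/5]$ for $Q = E_8$; base change along $R \to k$ then yields the field statement whenever $\ch(k)$ is good, and by the orientation-independence noted above we may fix the orientations of Fig.~\ref{fig:Dynkin_diagrams}. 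Type $A$ is essentially classical: the multiplicative relation can be solved triangularly along the linear quiver with no denominators. So the content is in types $D$ and $E$.

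For the construction over $R$, equip the path algebra $R\bar{Q}$ of the doubled quiver with the path-length filtration. Writing the multiplicative relation as $r = r_2 + r_{\geq 3}$, its lowest-degree part $r_2$ equals, after the sign change $a^{*} \mapsto -a^{*}$ on a suitable subset of arrows, the additive moment map $\sum_a [a,a^{*}]$; hence $\mathrm{gr}\,\Lambda_R(Q)$ is a quotient of $\Pi_R(Q)$. I would then build an $R$-algebra homomorphism $\phi \colon \Pi_R(Q) \to \Lambda_R(Q)$ by setting $\phi(a) = a + (\text{terms of path length} \geq 2)$ and solving, degree by degree, the single requirement that $\phi$ applied to the moment map vanish in $\Lambda_R(Q)$ (where $r = 0$). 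Each stage inverts a fixed nonzero integer, and the main computational task -- carried out quiver by quiver using the explicit small presentations -- is to verify that the primes dividing these integers are exactly the bad primes of $Q$. By construction $\phi$ induces the identity on associated gradeds, hence is surjective; since $\Lambda_R(Q)$ and $\Pi_R(Q)$ are finitely generated free $R$-modules of the same rank (Shaw's dimension formula, valid over $R$ by flatness and the char-$0$ isomorphism of Crawley--Boevey), $\phi$ is an isomorphism.

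For the converse, ``$\ch(k)$ bad $\Rightarrow \Lambda_k(Q) \ncong \Pi_k(Q)$'', I would use zeroth Hochschild homology $\HH_0(-) = (-)/[-,-]$ as an isomorphism invariant. The subtlety, handled by the rigidification argument advertised in the abstract, is that an arbitrary $k$-algebra isomorphism $\Lambda_k(Q) \to \Pi_k(Q)$ need not respect the path-length filtration; I would prove a lemma showing that any such isomorphism may be adjusted by an inner automorphism and a filtered change of variables so that it does, making it legitimate to compare the two algebras on a short explicit list of $\HH_0$-classes rather than on all of $\HH_0$. It then remains to compute enough of $\HH_0(\Lambda_k(Q))$: when $\ch(k)$ is a bad prime $p$ for $Q$, I would show that certain commutator classes which vanish in $\HH_0(\Pi_k(Q))$ fail to vanish (or vice versa) in $\HH_0(\Lambda_k(Q))$, producing a discrepancy in $\HH_0$ and hence an obstruction to any isomorphism; Shaw's $D_4$, $\ch(k) = 2$ example is recovered as the smallest case.

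I expect the main obstacle to be twofold. First, the explicit degree-by-degree lift in type $E_8$: controlling which primes enter the denominators through the higher terms $r_{\geq 3}$ is delicate and must come out to be precisely $\{2,3,5\}$, with nothing worse. Second, the rigidification lemma together with the $\HH_0(\Lambda_k(Q))$ computation -- one must show that detecting the difference on a bounded family of $\HH_0$-classes suffices despite the a priori lack of compatibility of an abstract isomorphism with any filtration, and then make those classes genuinely visible at each bad prime in types $D$ and $E$.
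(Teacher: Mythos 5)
Your proposal follows essentially the same route as the paper: explicit corrected (arrow $\mapsto$ arrow $+$ higher order) isomorphisms over $\mathbb{Z}$ with the bad primes inverted, found by solving the image of the relation degree by degree and verified computationally in type E; and, for the converse, a rigidification of an arbitrary isomorphism to a filtered one followed by a comparison of finitely many explicit classes in $\HH_0$ ($\alpha\beta$ at $p=2$, $\beta\alpha\beta$ at $p=3$, $\beta\alpha\beta\alpha\beta$ at $p=5$ in the paper's notation). The one step where you diverge is injectivity of the constructed map: as stated, invoking that $\Lambda_R(Q)$ is free over $R$ of the ``same rank'' is circular, since that freeness is a consequence of the theorem rather than a priori knowledge; the paper instead orients the map as $\Lambda_R(Q)\to\Pi_R(Q)$ and observes that the composite $\Pi_R(Q)\twoheadrightarrow\mathrm{gr}\,\Lambda_R(Q)\to\mathrm{gr}\,\Pi_R(Q)\cong\Pi_R(Q)$ is the identity, which is self-contained, whereas your version must be repaired by using only freeness of $\Pi_{\mathbb{Z}}(Q)$ plus Crawley--Boevey's characteristic-zero isomorphism to see that $\phi\otimes\mathbb{Q}$ is injective and then that $\ker\phi$ is torsion, hence zero.
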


Note that the definitions of the multiplicative and additive preprojective
algebra make sense over a general commutative ring, $R$. We obtain this
result over $R$, where the condition that the $\text{char}(k)$ is good for
$Q$ is modified to the condition that all bad primes for $Q$ are invertible
in $R$.

To prove this theorem, we construct explicit isomorphisms over
$\mathbb{Z}[1/2]$ in type D, over $\mathbb{Z}[1/2, 1/3]$ for
$E_{6}$, $E_{7}$ and over $\mathbb{Z}[1/2, 1/3, 1/5]$ for $E_{8}$. For type
D, using the orientation and labels in Fig.~\ref{fig:Dynkin_diagrams} and writing
$\gamma _{i} := c_{i}^{*} c_{i}$ we have the isomorphism
$\phi : \Lambda _{\mathbb{Z}[1/2]}(D_{n}) \rightarrow \Pi _{
\mathbb{Z}[1/2]}(D_{n})$ defined to be the identity on all vertices and
arrows except:
\begin{equation*}
\phi (a) = a p(\gamma _{1}) \quad \quad \phi (a^{*}) = q(\gamma _{1}) a^{*}
\quad \quad \phi (c_{i}) = c_{i} p(\gamma _{i})
\end{equation*}
where $q(x) := 1 +x/2$ and $p(x) := q^{-1}(x)$. For type E, the isomorphisms
are explicit but less presentable.

While the construction of these isomorphisms is new, their existence is
known. Etg\"{u}--Lekili, working at the level of dg-algebras
$\mathcal{G}^{\text{mult}}_{Q}$ and $\mathcal{G}^{\text{add}}_{Q}$, whose
zeroth homology is respectively $\Lambda _{k}(Q)$ and $\Pi _{k}(Q)$, prove
the existence of a quasi-isomorphism in type D if and only if
$\text{char}(k) \neq 2$, \cite[Theorem 13]{EL17}. The technique involves
realizing $\mathcal{G}^{\text{mult}}_{Q}$ as a flat, filtered deformation
of $\mathcal{G}^{\text{add}}_{Q}$ and then computing that the relevant deformation
class in $\text{HH}^{2}(\mathcal{G}^{\text{add}}_{Q})$ is zero if and only
if $\text{char}(k) \neq 2$. More recently, Lekili--Ueda computed
$\text{HH}^{\bullet}(\mathcal{G}^{\text{add}}_{Q})$ for $Q$ ADE Dynkin in
\cite[Section 5]{LU21}. In particular, they computed that the positive
piece of $\text{HH}^{2}(\mathcal{G}^{\text{add}}_{Q})$ is zero if
$\text{char}(k)$ is good for $Q$. Hence at the level of algebras they prove
the existence of an isomorphism $\Lambda _{k}(Q) \cong \Pi _{k}(Q)$ in
good characteristic. In this setting, our main result says that the relevant
deformation class in $\text{HH}^{2}(\mathcal{G}^{\text{add}}_{Q})$ is non-trivial
both for $Q$ type E if $\text{char}(k) = 3$ and for $Q =E_{8}$ if
$\text{char}(k) = 5$. Hence $\mathcal{G}^{\text{mult}}_{Q}$ and
$\mathcal{G}^{\text{add}}_{Q}$ are \emph{not} quasi-isomorphic over bad characteristic,
completing the proof of the following:
%
%t1.3 #&#
\begin{thm} [Theorem~\ref{thm:_dg_main}]
\label{thm1.3}
Let $Q$ be ADE Dynkin.
$\mathcal{G}^{\text{mult}}_{Q} \cong \mathcal{G}^{\text{add}}_{Q}$ are quasi-isomorphic
if and only if $\text{char}(k)$ is good for $Q$.
\end{thm}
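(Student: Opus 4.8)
The plan is to leverage Theorem~\ref{thm:_main} at the level of zeroth homology together with the deformation-theoretic framework already set up by Etg\"{u}--Lekili and Lekili--Ueda. The dg-algebra $\mathcal{G}^{\text{mult}}_{Q}$ is a flat filtered deformation of $\mathcal{G}^{\text{add}}_{Q}$, so it is classified (to first order, and here in fact globally, since the filtration is bounded and the obstruction calculus collapses) by a class $\xi_{Q} \in \text{HH}^{2}(\mathcal{G}^{\text{add}}_{Q})$ of the appropriate internal degree. A quasi-isomorphism $\mathcal{G}^{\text{mult}}_{Q} \simeq \mathcal{G}^{\text{add}}_{Q}$ exists if and only if $\xi_{Q}$ can be trivialized by a (possibly nonlinear, filtration-preserving) gauge transformation, and because the relevant graded piece of $\text{HH}^{2}$ is concentrated in a single weight, this reduces to the honest vanishing $\xi_{Q} = 0$ in that weight-graded piece of $\text{HH}^{2}(\mathcal{G}^{\text{add}}_{Q})$.

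First I would recall from \cite[Section 5]{LU21} the computation of $\text{HH}^{\bullet}(\mathcal{G}^{\text{add}}_{Q})$ for $Q$ ADE, isolating the degree-$2$, positive-weight part; by their result this vanishes precisely in good characteristic, which immediately gives the ``if'' direction: in good characteristic $\xi_{Q}=0$, so the deformation is gauge-trivial and $\mathcal{G}^{\text{mult}}_{Q} \simeq \mathcal{G}^{\text{add}}_{Q}$. (Alternatively, and more self-containedly, the explicit algebra isomorphisms $\phi$ constructed over $\mathbb{Z}[1/2]$, $\mathbb{Z}[1/2,1/3]$, $\mathbb{Z}[1/2,1/3,1/5]$ can be lifted to the dg level using that both Ginzburg dg-algebras are cofibrant resolutions of their $H^{0}$ in the relevant characteristics; one checks the cohomology of $\mathcal{G}_{Q}$ is concentrated in degree $0$ there, so a quasi-isomorphism of $H^{0}$'s lifts.) For the ``only if'' direction, suppose some bad prime $p$ is not invertible, i.e. $\text{char}(k) = p$ is bad for $Q$. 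If $\mathcal{G}^{\text{mult}}_{Q}$ and $\mathcal{G}^{\text{add}}_{Q}$ were quasi-isomorphic, then passing to $H^{0}$ would give $\Lambda_{k}(Q) \cong \Pi_{k}(Q)$, contradicting Theorem~\ref{thm:_main}. This already closes the argument for \emph{algebra}-level obstructions; the point emphasized in the introduction is that the obstruction is in fact visible already in $\text{HH}_{0}$ of $\Lambda_{k}(Q)$, and for type E with $\text{char}(k) = 3$ and $E_{8}$ with $\text{char}(k)=5$ these are the genuinely new cases not covered by \cite{EL17}.

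Concretely, the key steps in order are: (i) recall the definitions of $\mathcal{G}^{\text{mult}}_{Q}$, $\mathcal{G}^{\text{add}}_{Q}$ and the filtration exhibiting the former as a deformation of the latter, with $H^{0}$'s equal to $\Lambda_{k}(Q)$, $\Pi_{k}(Q)$ respectively; (ii) observe that in good characteristic $H^{\bullet}(\mathcal{G}^{\text{add}}_{Q})$ (hence also $H^\bullet(\mathcal{G}^{\text{mult}}_{Q})$, the two being abstractly filtered-related) is concentrated in degree $0$, so quasi-isomorphism type is detected by $H^{0}$, and invoke Theorem~\ref{thm:_main} for the ``if'' direction; (iii) for the ``only if'' direction, note a quasi-isomorphism of dg-algebras induces an isomorphism on $H^{0}$, and apply Theorem~\ref{thm:_main} in bad characteristic to get a contradiction. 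I would also include the interpretation via $\text{HH}^{2}(\mathcal{G}^{\text{add}}_{Q})$ to connect with \cite{EL17,LU21}: the deformation class $\xi_{Q}$ is nonzero in the positive-weight part of $\text{HH}^{2}$ precisely when some bad prime is non-invertible, which is exactly the Lekili--Ueda computation read in the other direction.

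The main obstacle is step (ii): one must be sure that in good characteristic the cohomology of the Ginzburg dg-algebra really is concentrated in degree $0$ (Koszulity / exactness of the Koszul-type complex computing $\mathcal{G}_{Q}$), so that ``quasi-isomorphic'' is equivalent to ``$H^{0}$'s isomorphic'' and the problem genuinely reduces to Theorem~\ref{thm:_main}; in bad characteristic this may fail, but there the ``only if'' direction only needs the easy implication (quasi-isomorphism $\Rightarrow$ $H^{0}$-isomorphism), which holds unconditionally. A secondary subtlety is matching the weight grading conventions between the filtered deformation picture and the Hochschild cohomology computation of \cite{LU21} so that ``positive piece of $\text{HH}^{2}$'' is exactly the graded line containing $\xi_{Q}$; this is bookkeeping rather than a real difficulty, but it is where the identification of the \emph{specific} bad primes ($3$ for $E_{6},E_{7},E_{8}$ and $5$ for $E_{8}$) with nontriviality of $\xi_{Q}$ gets pinned down.
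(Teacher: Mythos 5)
Your ``only if'' direction is exactly the paper's: a quasi-isomorphism induces an isomorphism on $H^{0}$, i.e.\ $\Lambda_{k}(Q)\cong\Pi_{k}(Q)$, which is ruled out in bad characteristic. The ``if'' direction via the deformation class $\xi_{Q}\in\text{HH}^{2}(\mathcal{G}^{\text{add}}_{Q})$ is also the paper's strategy (the paper computes the relevant group more self-containedly as $\text{HH}^{2}(\mathcal{G}^{\text{add}}_{Q})\cong\text{HH}_{0}(\Pi(Q))$ via Van den Bergh duality, since $\mathcal{G}^{\text{add}}_{Q}$ is $2$-Calabi--Yau and concentrated in non-positive degrees, rather than citing Lekili--Ueda). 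However, there is a genuine gap in how you close this direction: vanishing of $\xi_{Q}$ only shows that $\mathcal{G}^{\text{mult}}_{Q}$ is trivial as a \emph{formal} deformation, i.e.\ one gets a quasi-isomorphism only after \emph{completing} both dg-algebras with respect to the path-length filtration. Your parenthetical ``the filtration is bounded and the obstruction calculus collapses'' is not correct -- the path-length filtration on the free (localized) path algebra underlying $\mathcal{G}_{Q}$ is unbounded. Passing from the completed quasi-isomorphism to an honest one is the content of Etg\"{u}--Lekili's argument: truncate to an algebra map of uncompleted algebras, correct it to a chain map, and invoke the \emph{strong convergence} of the spectral sequences of the filtrations (their Lemma 12, which uses finite-dimensionality of $\Lambda_{k}(Q)$ and extends from type D to type E). This step cannot be waved away and is the reason the paper quotes their proof explicitly.

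A second, independent error is your step (ii) and the alternative route (b): it is \emph{false} that $H^{\bullet}(\mathcal{G}^{\text{add}}_{Q})$ is concentrated in degree $0$ for $Q$ Dynkin (in any characteristic). If it were, $\Pi_{k}(Q)$ would inherit the $2$-Calabi--Yau property of the Ginzburg dg-algebra, i.e.\ would be homologically smooth; but for Dynkin $Q$ it is a finite-dimensional self-injective algebra of infinite global dimension. (Concentration in degree $0$ is a feature of the \emph{non}-Dynkin case.) So ``quasi-isomorphic'' genuinely does not reduce to ``isomorphic $H^{0}$'s,'' and the lifting argument you sketch there does not work; only the deformation-theoretic route, completed as above, gives the ``if'' direction.
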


For any $k$-algebra, $A$, define the zeroth Hochschild homology of $A$ to be
\begin{equation*}
\text{HH}_{0}(A) := A/[ A, A] =: A_{\text{cyc}}
\end{equation*}
where $[A, A]$ denotes the $k$-linear span
of the set of commutators in $A$. We show for $Q$ ADE Dynkin
that $\Lambda _{k}(Q)_{\text{cyc}} = kQ_{0}$, whereas $\Pi_k(Q)_{\text{cyc}} \neq k Q_0$ in bad characteristic following Schedler \cite[Theorem 13.1.1]{Schedler16}. Hence
$\Lambda _{k}(Q)_{\text{cyc}} \cong \Pi _{k}(Q)_{\text{cyc}}$, if and only
if $\text{char}(k)$ is good for $Q$. Note if $\text{char}(k)$ is bad for
$Q$ then the discrepancy in zeroth Hochschild homology gives an obstruction
to both an isomorphism $\Lambda _{k}(Q) \cong \Pi _{k}(Q)$ and a quasi-isomorphism
$\mathcal{G}^{\text{mult}}_{Q} \cong \mathcal{G}^{\text{add}}_{Q}$.

Alternatively, we give a procedure to utilize automorphisms of
$\Pi _{k}(Q)$ to \emph{correct} any $k$-algebra isomorphism
$\Lambda _{k}(Q) \rightarrow \Pi _{k}(Q)$ to an isomorphism that is the
identity on vertices and takes certain paths to themselves plus higher
order terms. Hence one need not compute the entire zeroth Hochschild homology.
Rather, it suffices to find a path $p$ that is zero when viewed as an element
of $\Lambda _{k}(Q)_{\text{cyc}}$ but non-zero in
$\Pi _{k}(Q)_{\text{cyc}}$. And indeed -- in the case $Q = D_{4}$ with arrows
$a, b, c$ pointing towards the central vertex -- $c^{*} c b^{*}b$ is such
a path. Notice the relation in $\Lambda_k(D_{4})$ can be written:
\begin{equation*}
[a^{*}, a] + [b^{*}, b] + [c^{*}, c] + c^{*} c b^{*}b = 0
\end{equation*}
realizing
$c^{*} c b^{*}b \in [\Lambda _{k}(D_{4}), \Lambda _{k}(D_{4})]$. Whereas,
$c^{*} c b^{*}b \notin [\Pi _{k}(D_{4}), \Pi _{k}(D_{4})]$ if
$\text{char}(k) =2$, see Lemma~\ref{lem:_length_4}. In a similar vein, in Lemma~\ref{lem:_length_6} we show the length 6 path
$b^{*} b a^{*} a b^{*} b$ is zero in
$\Lambda _{k}(E_{6})_{\text{cyc}}$ and non-zero in
$\Pi _{k}(E_{6})_{\text{cyc}}$ if $\text{char}(k) = 3$. Moreover, in Lemma~\ref{lem:_length_10} we show the length 10 path
$b^{*}b a^{*} a b^{*} b a^{*} a b^{*} b$ is zero in
$\Lambda _{k}(E_{8})_{\text{cyc}}$ and non-zero in
$\Pi _{k}(E_{8})_{\text{cyc}}$ if $\text{char}(k) = 5$.

As a final application, we demonstrate that $\Lambda _{k}(Q)$ and
$\Pi _{k}(Q)$, when non-isomorphic, can have different structure. Namely,
if $\text{char}(k) = 2$ and $Q = D_{2n}, E_{7}, E_{8}$ then
$\Pi _{k}(Q)$ is symmetric Frobenius with form given by summing the coefficients
of the top degree cycles. Whereas, $\Lambda _{k}(Q)$ is not symmetric Frobenius
as any symmetric form on $\Lambda _{k}(Q)$ factors though
$\Lambda _{k}(Q)_{\text{cyc}} = k Q_{0}$, and hence is degenerate. In good
characteristic, our isomorphisms restrict to the identity in top degree,
establishing that $\Lambda _{k}(Q)$ is Frobenius with form
$\Lambda _{k}(Q) \rightarrow \Lambda ^{\text{top}}_{k}(Q) \rightarrow k$
projecting to top degree, as explained in Section~\ref{ss:_Frobenius}.

\bigskip
\noindent
\textbf{Open questions}

Here are several natural questions not addressed in this work:
\begin{itemize}
\item[(1)] Do the isomorphisms $\Lambda _{k}(Q) \cong \Pi _{k}(Q)$ lift
to quasi-isomorphisms
$\mathcal{G}_{Q}^{\text{mult}} \cong \mathcal{G}_{Q}^{\text{add}}$?
\item[(2)] Is the multiplicative preprojective algebra with parameter
$q$, $\Lambda ^{q}_{k}(Q)$, isomorphic to $\Pi _{k}^{\lambda}(Q)$ for some
deformation parameter $\lambda $ depending on $q$, if and only if
$\text{char}(k)$ is good for $Q$?
\item[(3)] Can one build a uniform isomorphism
$\Lambda _{k}(Q) \cong \Pi _{k}(Q)$ for each ADE Dynkin quiver?
\end{itemize}
Note for (2) that one should impose the condition that
$\Pi ^{\lambda}_{k}(Q)$ is a flat deformation of $\Pi _{k}(Q)$, in order
to avoid instances where both algebras are zero or concentrated in degree
zero. In this case, we checked that the relevant classes in zeroth Hochschild
homology differ if $\text{char}(k) = 2$ or $3$.

\bigskip
\noindent
\textbf{Non-Dynkin case}

We suspect for $Q$ connected, not ADE Dynkin that
$\Pi _{R}(Q) \ncong \Lambda _{R}(Q)$ as $R$-algebras unless $R$ is
the trivial ring. Etg\"{u}--Lekili claim that their representation varieties
are not isomorphic (see the paragraph in the introduction between Remark
4 and Example 5 in \cite{EL19}). We outline an argument for $Q_{E}$ extended
Dynkin and for $Q'$ containing a cycle.

First note that it suffices to prove the result for $R = k$ a field since
any isomorphism $\Pi _{R}(Q) \cong \Lambda _{R}(Q)$ will descend to one
$\Pi _{R/ \mathfrak{m}}(Q) \cong \Lambda _{R/ \mathfrak{m}}(Q)$ where
$\mathfrak{m}$ is a maximal ideal in $R$.

If $Q'$ contains an unoriented cycle, including the extended Dynkin case
$\tilde{A}_{n}$, then $\Pi _{k}(Q') \ncong \Lambda _{k}(Q')$ should
follow from an explicit $k$-basis for $\Lambda _{k}(Q)$ given in
\cite[Proposition 7.11]{KS20}.

For each extended Dynkin quiver $Q_{E}$ with extended vertex 1, Shaw in
\cite[Theorem 4.1.1]{Shaw05} found a polynomial
$g(Q_{E}) \in k[X, Y, Z]$ such that:
\begin{equation*}
e_{1} \Lambda _{k}(Q_{E}) e_{1} \cong k[X, Y, Z]/(g(Q_{E})).
\end{equation*}
Earlier work of Crawley-Boevey and Holland \cite{CBH98} establishes that
the du Val polynomial $f(Q_{E}) \in k[X, Y, Z]$ satisfies
\begin{equation*}
e_{1} \Pi _{k}( Q_{E}) e_{1} \cong k[X, Y, Z]/(f(Q_{E})).
\end{equation*}
Now for $Q_{E} \neq \tilde{A}_{n}$, $Q_{E}$ is acyclic so Corollary~\ref{cor:_correct_iso} says any isomorphism
$\Lambda _{k}(Q_{E}) \rightarrow \Pi _{k}(Q_{E})$ can be corrected to one
sending
$e_{1} \Lambda _{k}(Q_{E}) e_{1} \rightarrow e_{1} \Pi _{R}( Q_{E}) e_{1}$
and sending each arrow to itself plus higher order terms. Hence it induces
an isomorphism
$k[X, Y, Z]/(g(Q_{E})) \rightarrow k[X, Y, Z]/(f(Q_{E}))$ taking each variable
to itself plus higher order terms. One can check that no such isomorphism
exists, although note that (at least in type E) an isomorphism exists after
completing both algebras at the origin.

\bigskip
\noindent
\textbf{Literature independence}

To highlight the elementary nature of these arguments, notice that one
can arrive at the main result, Theorem~\ref{thm:_main}, using exclusively
internal results and Schedler's result
\cite[Theorem 13.1.1]{Schedler16}. In fact, we need only that a single
torsion class in each of $\Pi _{\mathbb{Z}}(D_{4})_{\text{cyc}}$,
$\Pi _{\mathbb{Z}}(E_{6})_{\text{cyc}}$, and
$\Pi _{\mathbb{Z}}(E_{8})_{\text{cyc}}$ is non-zero. This is itself independent
of the literature, utilizing only the Diamond Lemma for modules over a
commutative ring in \cite[Appendix A]{Schedler16}.

\bigskip
\noindent
\textbf{Computer dependence}

Four results rely on computer assistance: Lemma~\ref{lem:_length_10} and
Propositions~\ref{prop:_HH_0_Em}, \ref{prop:_iso_E7}, and \ref{prop:_iso_E8}. Without a computer, one can still prove all results
in Sections~\ref{sec:_background} and \ref{sec:_correct_iso}, find obstructions
in characteristic 2 and 3, and construct isomorphisms in type D and for
$E_{6}$.

\bigskip
\noindent
\textbf{Organization of the paper}

In Section~\ref{sec:_background} we fix notation, define the relevant objects
of study, and prove some elementary results. In Section~\ref{sec:_correct_iso}, we explain how any $k$-algebra isomorphism can
be corrected by utilizing certain automorphisms of the target, to one preserving
the vertices. In Section~\ref{sec:_HH_0}, we demonstrate the non-existence
of an isomorphism in bad characteristic by computing the zeroth Hochschild
homology of the multiplicative preprojective algebra of Dynkin quivers.
In Section~\ref{sec:_construct_isos}, we construct an isomorphism in good
characteristic, completing the proof of Theorem~\ref{thm:_main}. In Section~\ref{ss:_dg_results} we prove Theorem~\ref{thm:_dg_main} and interpret
the Hochschild homology computation in terms of Hochschild cohomology using
Van den Bergh duality. Section~\ref{ss:_Frobenius} proves the non-existence
of a symmetric Frobenius structure on the multiplicative preprojective algebra if $Q \neq A_1$. We conclude with an appendix containing Magma code to
verify our constructed maps descend to isomorphisms.

Fig.~\ref{fig:_flowchart} is a flowchart of implications. In particular
it shows that Section~\ref{sec:_correct_iso} is extraneous, but using it
simplifies the work needed in Section~\ref{sec:_HH_0} to arrive at the
main result. And note that Section~\ref{sec:_correct_iso} explains how
the isomorphisms are constructed in Section~\ref{sec:_construct_isos}.

\fboxsep=1.8pt
%f2 #&#
\begin{figure}
\hspace*{5pt}
\xymatrix@R=.00em@C=.65em{
& \fbox{\txt{\underline{Section 3} \\Correct Isomorphisms}} \ar@{=>}[r] & \fbox{\txt{\underline{Section 4} \textcolor{red}{(only lemmas)} \\ Zeroth Hochschild Homology}} \ar@{=>}[r] \ar@{=>}!<7ex,-1ex>;[rdd]!<0ex,0ex> &
\fbox{\txt{\underline{Section 6.1} \\ Theorem~\ref{thm:_dg_main} }}\\ \\
\fbox{\txt{\underline{Section 2} \\Background}} \ar@{=>}[ruu] \ar@{=>}[r] \ar@{=>}[rdd]!<-9ex,0ex>
& \fbox{\txt{\underline{Section 7} \\Appendix}} \ar@{=>}[r] & \fbox{\txt{\underline{Section 5} \\Construct Isomorphisms}} \ar@{=>}!<7ex,-1ex>;[rdd]!<0ex,0ex> \ar@{=>}[r]&
\fbox{\txt{\underline{Main result} \\ Theorem~\ref{thm:_main} }} \\ \\
& \fbox{\txt{\underline{Section 4} \\ Zeroth Hochschild Homology}} \ar@{=>}[rr] \ar@{=>}[rd]& &
\fbox{\txt{\underline{Main result} \\ Theorem~\ref{thm:_main} }} \\
& & \fbox{\txt{\underline{Section 6.2} \\ Frobenius Structures}}
}\vspace{10pt}
\caption{A flowchart of implications, where $\fbox{x} \implies \fbox{y}$ means
$x$ is a prerequisite to $y$.}
\label{fig:_flowchart}
\end{figure}
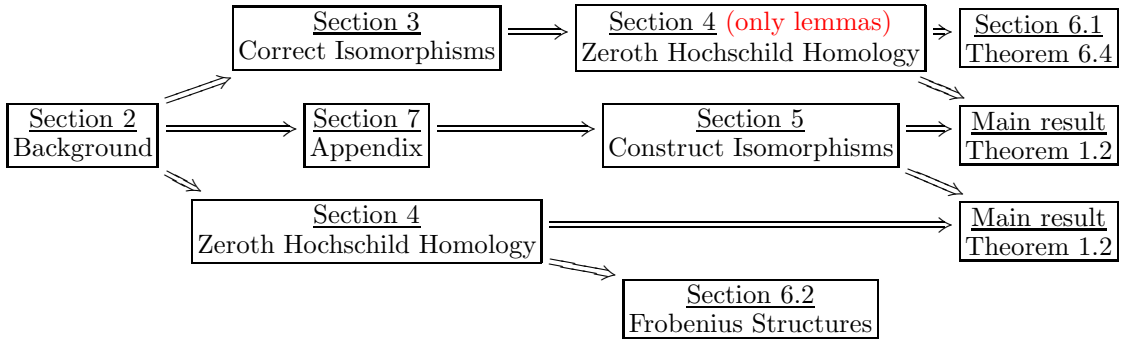

\subsection*{Acknowledgements}
This work was funded by the Engineering and Physical Sciences Research Council (EPSRC) grant number EP/S03062X/1. The Geometry and Mathematical Physics
and Algebra groups at the University of Birmingham provided an engaging
research environment. The work benefited from conversations and emails
with Travis Schedler, who caught a mistake in an earlier draft of this
paper. Thank you Yanki Lekili for explaining your result with Etg\"{u}
in a cab to the airport in Luminy and for making me aware of your work
with Ueda. The exposition evolved following discussions with Tyler Kelly,
Jack Smith, and Michael Wong. David Craven was extremely helpful with my
questions regarding Magma implementation and Jonathan Kaplan provided general
coding suggestions. Finally, thank you to the anonymous
referee for carefully reading an earlier version of this paper and catching
several errors.

%s2 #&#
\section{Background}
%%LEAP%%%\label{sec2}
\label{sec:_background}

Let $Q$ be a quiver (i.e., a directed graph) with vertex set $Q_{0}$, arrow
set $Q_{1}$, and source and target maps
$s, t:Q_{1} \rightarrow Q_{0}$ respectively. An \emph{automorphism} of
$Q$ is a pair of bijections
$(\varphi _{0}: Q_{0} \rightarrow Q_{0}, \varphi _{1}: Q_{1}
\rightarrow Q_{1})$ satisfying
$\varphi _{0} \circ s = s \circ \varphi _{1}$ and
$\varphi _{0} \circ t = t \circ \varphi _{1}$. Let $Q^{\text{op}}$ denote
the opposite quiver (i.e., $Q^{\text{op}}$ has the same underlying graph
as $Q$ but each arrow is pointing in the opposite direction). For an arrow
$a \in Q_{1}$, we write $a^{*}$ for the reverse arrow in
$Q_{1}^{\text{op}}$. Let $\overline{Q}$ denote the doubled quiver formed
from $Q$ by adding the arrow $a^{*}$ for all $a \in Q_{1}$.

Let $\Gamma (Q)$ denote the graph underlying $Q$. We say $Q$ is
\emph{star-shaped} if there exists $m \in \mathbb{N}$ and
$n_{1}, \dots , n_{m} \in \mathbb{N}$ such that $\Gamma (Q)$ is a quotient
of the $m$ type A graphs (see Fig.~\ref{fig:Dynkin_diagrams})
$\sqcup _{i \in \{1, 2, \dots m \}} \Gamma (A_{n_{i}}) $ formed by identifying
the $1 \in (A_{n_{1}})_{0}$ with $1 \in (A_{n_{i}})_{0}$ for all
$i \in \{ 1, \dots , m \}$. The subquivers $A_{n_{i}} \subset Q$ are called
\emph{arms}, which for consistency we always orient towards the central
vertex. For a general star-shaped quiver $Q$ we write $v$ for the central
vertex, while we set $v = 3$ when working with a specific Dynkin quiver
as in Fig.~\ref{fig:Dynkin_diagrams}.

%e2.1 #&#
\begin{exam}
\label{exmp2.1}
Let $D_{5}$ be the quiver (depicted below) with vertex set
$\{1, 2, 3, 4, 5 \}$, arrow set $\{ a, b, c, d \}$, and with e.g.
$s(a) =1$, $t(a) = 3$. $D_{5}$ is star-shaped with three arms
$1 \overset{a}{\rightarrow} 3$, $2 \overset{b}{\rightarrow} 3$ and
$5 \overset{d}{\rightarrow} 4 \overset{c}{\rightarrow} 3$. The opposite
and doubled quivers are
\begin{equation*}
\xymatrix@R=1em@C=1.5em{ 1 \ar[rd]^a & & & & 1 & & & & 1 \ar@/^.5pc/[rd]^a & & & & \\
D_{5} = \hspace{.5cm} & 3 & 4 \ar[l]_{c} & \ar [l]_{d} 5 & D_{5}^{\text{op}} = \hspace{.5cm} & 3 \ar[r]^{c^{*}} \ar[lu]_{a^{*}} \ar[ld]^{b^{*}} & 4 \ar[r]^{d^{*}} & 5 & \overline{D}_{5} = \hspace{.5cm} & 3 \ar@/^.5pc/[r]^{c^{*}} \ar@/^.5pc/[lu]_{a^{*}} \ar@/^.5pc/[ld]^{b^{*}} & 4 \ar@/^.5pc/[r]^{d^{*}} \ar@/^.5pc/[l]^{c} & \ar @/^.5pc/[l]^{d} 5. \\
2 \ar[ru]_b & & & & 2 & & & & 2 \ar@/^.5pc/[ru]_{b} & & & &}
\end{equation*}
\end{exam}

For any commutative ring $R$, let $R Q$ denote the path algebra (i.e. the
algebra of $R$-linear combinations of paths) of the quiver. Multiplication
in $R Q$ is given by concatenation (which we write from left to right)
where possible, and zero otherwise. The path algebra includes length-zero
(or lazy) paths at the vertices which we denote $e_{i}$ for
$i \in Q_{0}$.
We will call a path $p = a_{1} \cdots a_{n}$ in $RQ$ a \emph{cycle} if
$s(a_{1}) = t(a_{n})$ and further call it a \emph{loop} if $n=1$.

%d2.2 #&#
\begin{defn}
\label{defn2.2}
The (additive) \emph{preprojective algebra} of a quiver $Q$, denoted
$\Pi _{R}(Q)$, is the quotient of the path algebra of the doubled quiver
$R \overline{Q}$ modulo the two-sided ideal generated by the relation
\begin{equation*}
r_{\text{add}} := \sum _{a \in Q_{1}} aa^{*} - a^{*} a .
\end{equation*}
\end{defn}

%d2.3 #&#
\begin{defn}
\label{defn2.3}
\cite[Definition 1.2]{CBS06} The
\emph{multiplicative preprojective algebra} of a quiver $Q$, denoted
$\Lambda _{R}(Q)$, is the quotient of the localized path algebra
$L := R \overline{Q} [ (1+a^{*}a)^{-1} ]_{a \in Q_{1}}$ modulo the two-sided
ideal generated by the relation
\begin{equation*}
r_{\text{mult}} := \prod _{a \in Q_{1}} (1+ aa^{*})(1+a^{*}a)^{-1} - 1.
\end{equation*}
\end{defn}

As $\Pi _{R}(Q)$ is graded by path length we write
$\Pi _{R}^{\geq i}(Q)$, $\Pi _{R}^{\leq i}(Q)$, and
$\Pi _{R}^{+}(Q)$ for the $R$-subspace generated by paths of length at
least $i$, paths of length at most $i$, and positive length paths respectively.

%r2.4 #&#
\begin{rem}
%%LEAP%%%\label{rem2.4}
\label{rem:_indep_of_ordering}
Note the product is taken with respect to some choice of ordering of the
arrows in $\overline{Q}_{1}$. Crawley-Boevey and Shaw prove that the isomorphism
class of $\Lambda _{R}(Q)$ is independent of the choice of ordering in
\cite[Theorem 1.4]{CBS06}. By conjugation, it is clear only the cyclic
order is relevant and hence for $Q$ star-shaped one only needs a cyclic
order at the central vertex. In this paper we label the arrows leaving
the central vertex as $a^{*}, b^{*}$, and $c^{*}$ and always use the alphabetical
order $a^{*} < b^{*} < c^{*}$.
\end{rem}

%r2.5 #&#
\begin{rem}
%%LEAP%%%\label{rem2.5}
\label{rem:_opposite}
Crawley-Boevey and Shaw read the concatenated path
$1 \overset{a}{\rightarrow} 2 \overset{b}{\rightarrow} 3$ as $ba$ (from
right to left) while our convention is to read this path as $ab$ from left
to right. By \cite[Theorem 1.4]{CBS06} the isomorphism class of
$\Lambda _{R}(Q)$ is independent of the orientation of $Q$, and hence
$\Lambda _{R}(Q) \cong \Lambda _{R}(Q^{\text{op}}) = \Lambda _{R}(Q)^{
\text{op}}$. For the usual preprojective algebra
$\Pi _{R}(Q) \cong \Pi _{R}(Q)^{\text{op}}$ via the map defined on generators
by $e_{i} \mapsto e_{i}$ and $a \mapsto a^{*}$, $a^{*} \mapsto -a$ for
$a \in Q_{1}$.
\end{rem}

%r2.6 #&#
\begin{rem}
%%LEAP%%%\label{rem2.6}
\label{rem:_relations_agree_mod_4}
The relation $r_{\text{mult}}$ can be realized as $r_{\text{add}}$ modulo
paths of length 4, as we now explain. Since the isomorphism class of the
algebra is independent of the choice of order on the arrows in
$\overline{Q}_{1}$, we choose an order with $a < b^{*}$ for all
$a, b \in Q_{1}$. So
\begin{equation*}
r_{\text{mult}} := \prod _{a \in Q_{1}} (1+aa^{*}) \prod _{a \in Q_{1}}
(1+a^{*}a)^{-1} -1.
\end{equation*}
Since $(1+ a^{*}a)^{-1} = 1 - a^{*} a \ + $ (higher order terms) we conclude
that
$r_{\text{mult}} = r_{\text{add}} + \text{(higher order terms)}$. Note if
$Q$ is star-shaped then there are only finitely many higher order terms
by Corollary~\ref{cor:_inverses}.
\end{rem}

We present the following unifying setting for studying these algebras.

%d2.7 #&#
\begin{defn}
\label{defn2.7}
Let $w \in Q_{0}$. The \emph{partial} preprojective algebra of
$(Q, w)$, denoted $\Pi _{R}(Q, w)$, is the quotient
$R \overline{Q} / ( (1-e_{w}) r_{\text{add}})$. Similarly, the
\emph{partial} multiplicative preprojective algebra of $(Q, w)$, denoted
$\Lambda _{R}(Q, w)$, is the quotient
$L / ((1-e_{w}) r_{\text{mult}})$.
\end{defn}

In words, this is the usual (multiplicative) preprojective algebra of
$Q$ except we do not enforce the relation at the vertex $w$.

We will show $\Pi _{R}(Q, v) = \Lambda _{R}(Q, v)$ for $Q$ star-shaped
with central vertex $v$, but first we need the following elementary result.

%l2.8 #&#
\begin{lem}
\label{lem2.8}
For all $i \in \{ 0, \dots , n-1 \}$,
$(a_{i} a_{i}^{*})^{i} = 0 \in \Pi _{R}(A_{n}, n)$.
\end{lem}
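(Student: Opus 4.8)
The plan is a short induction on $i$, carried out after making the defining relations of $\Pi_R(A_n,n)$ completely explicit. Label $\overline{A_n}$ so that $s(a_j)=j$ and $t(a_j)=j+1$; then $a_j a_j^{*}$ is a cycle at vertex $j$ and $a_j^{*} a_j$ is a cycle at vertex $j+1$. Since $r_{\text{add}}=\sum_{j}(a_j a_j^{*}-a_j^{*}a_j)$ is a sum of such cycles, one has $e_i\, r_{\text{add}} = e_i\, r_{\text{add}}\, e_i$ for every $i$, so $(1-e_n)r_{\text{add}}=\sum_{i=1}^{n-1} e_i r_{\text{add}} e_i$, and the two-sided ideal it generates coincides with the ideal generated by $\{\,e_i r_{\text{add}} e_i : 1\le i\le n-1\,\}$ (each $e_i r_{\text{add}} e_i$ is recovered from the sum by multiplying by $e_i$ on both sides). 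Reading these off vertex by vertex gives, in $\Pi_R(A_n,n)$,
$$a_1 a_1^{*}=0,\qquad a_i a_i^{*}=a_{i-1}^{*}a_{i-1}\quad(2\le i\le n-1),$$
while no relation is imposed at vertex $n$.

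Now induct on $i$. For $i=0$ the statement is vacuous, and for $i=1$ it is precisely the vertex-$1$ relation $a_1 a_1^{*}=0$. Assume $2\le i\le n-1$ and $(a_{i-1}a_{i-1}^{*})^{\,i-1}=0$. Substituting the vertex-$i$ relation $a_i a_i^{*}=a_{i-1}^{*}a_{i-1}$ into each of the $i$ factors and then re-bracketing,
$$(a_i a_i^{*})^{\,i}=(a_{i-1}^{*}a_{i-1})^{\,i}=a_{i-1}^{*}\,(a_{i-1}a_{i-1}^{*})^{\,i-1}\,a_{i-1}=a_{i-1}^{*}\cdot 0\cdot a_{i-1}=0,$$
which completes the induction.

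The computation is essentially forced once the relations are written down, so the only real obstacle is the bookkeeping in the first step: correctly tracking the sources and targets of the reversed arrows, hence identifying that the relation imposed at vertex $i$ is $a_i a_i^{*}=a_{i-1}^{*}a_{i-1}$ (not its mirror image), and checking that the induction uses only relations at the vertices $1,\dots,n-1$, all of which are present since only vertex $n$ is omitted. I would also remark in passing that the same manipulation gives $(a_i a_i^{*})^{\,j}=0$ for every $j\ge i$, and that the exponent $i$ is sharp, since $(a_i a_i^{*})^{\,i-1}=(a_{i-1}^{*}a_{i-1})^{\,i-1}$ need not vanish.
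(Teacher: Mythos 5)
Your proof is correct and is essentially the paper's argument: both reduce $(a_i a_i^*)^i$ to vertex $1$ via the relations $a_i a_i^* = a_{i-1}^* a_{i-1}$ at the intermediate vertices, which you simply package as a clean induction $(a_i a_i^*)^i = a_{i-1}^*(a_{i-1}a_{i-1}^*)^{i-1}a_{i-1} = 0$. The explicit identification of the defining relations of $\Pi_R(A_n,n)$ vertex by vertex is a welcome (if routine) addition that the paper leaves implicit.
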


\begin{proof}
When $i=1$ the preprojective relation at vertex $1$ is
$a_{1} a_{1}^{*} =0$. When $i= 2$ the preprojective relation at vertex
$2$, $a_{2} a_{2}^{*} - a_{1}^{*} a_{1}$, implies
\begin{equation*}
(a_{2} a_{2}^{*})^{2} = (-a_{1}^{*} a_{1})^{2} = a_{1}^{*} ( a_{1} a_{1}^{*})
a_{1} = 0.
\end{equation*}
Continuing in this way, one can pull any path $(a_{i} a_{i})^{i}$ towards
vertex $1$ using the preprojective relation at the vertices
$2, 3, \dots , i$, at which point the relation at vertex 1 implies the
path is zero.
\end{proof}

%c2.9 #&#
\begin{cor}
%%LEAP%%%\label{cor2.9}
\label{cor:_inverses}
For $Q$ star-shaped with central vertex $v$ and $a \in Q_{1}$, the cycle
$a^{*}a \in \Pi _{R}(Q, v)$ is nilpotent.
\end{cor}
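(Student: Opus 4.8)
The plan is to reduce the claim to Lemma~\ref{lem2.8} by exhibiting each cycle $a^{*}a$ as living inside the image of one of the ``arm'' path algebras $R A_{n}$. Let $Q$ be star-shaped with central vertex $v$, and let $a \in Q_{1}$. By definition $a$ lies on some arm $A_{n} \subset Q$, which we orient towards $v$; say $a$ is the arrow from vertex $j$ to vertex $j+1$ along that arm, so that in the labelling of Fig.~\ref{fig:Dynkin_diagrams} for $A_{n}$ this is $a = a_{j}$ for some $j \in \{1,\dots,n-1\}$ (after possibly relabelling the arm so that $v$ plays the role of the terminal vertex $n$). The key point is that the inclusion of quivers $A_{n} \hookrightarrow Q$ induces an algebra map $R\overline{A_{n}} \to R\overline{Q}$, and — because the preprojective relation $r_{\mathrm{add}}$ at any vertex $i$ of the arm other than $v$ only involves the arrows of that arm incident to $i$ (no other arm touches those vertices) — this descends to a ring homomorphism $\Pi_R(A_n, n) \to \Pi_R(Q, v)$. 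Here we use that in $\Pi_R(Q,v)$ the relation is imposed at every vertex except $v$, in particular at all the interior arm vertices $1,\dots,n-1$.

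Next I would invoke Lemma~\ref{lem2.8}, which gives $(a_j a_j^{*})^{j} = 0$ in $\Pi_R(A_n, n)$. Pushing this through the homomorphism above yields $(a_j a_j^{*})^{j} = 0$ in $\Pi_R(Q,v)$, i.e.\ $a a^{*}$ is nilpotent in $\Pi_R(Q,v)$. To upgrade this to nilpotence of $a^{*}a$, observe that $(a^{*}a)^{j+1} = a^{*}(a a^{*})^{j} a$; since $(aa^{*})^{j} = 0$ we get $(a^{*}a)^{j+1} = 0$. (Alternatively, one notes that in $\Pi_R(Q,v)$ the vertex relation at $t(a)$, together with the relations further along the arm, lets one pull $a^{*}a$ down the arm exactly as in the proof of Lemma~\ref{lem2.8}; but routing through the already-proved lemma is cleaner.)

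The only thing requiring a moment's care — and the step I'd flag as the main obstacle — is verifying that the arm inclusion really does induce a well-defined map on the partial preprojective algebras, i.e.\ that the relation $(1-e_n)r_{\mathrm{add}}$ for $A_n$ maps into the ideal generated by $(1-e_v)r_{\mathrm{add}}$ for $Q$. This is where star-shapedness is essential: the interior vertices of an arm have valence $2$ in $Q$ and meet no other arm, so the component of $r_{\mathrm{add}}$ at such a vertex is literally the same expression whether computed in $\overline{A_n}$ or in $\overline{Q}$; at the central vertex $v$ no relation is imposed on either side. Once this bookkeeping is in place the result is immediate. (Note also that, as remarked after Definition~\ref{defn2.7}, for star-shaped $Q$ there are only finitely many arrows and arms, so no convergence issue arises.)
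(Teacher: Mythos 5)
Your proposal is correct and follows essentially the same route as the paper: locate $a$ in an arm $A_n$, apply Lemma~\ref{lem2.8} there, and transport the conclusion to $\Pi_R(Q,v)$ via the arm inclusion. You are merely more explicit about two points the paper leaves implicit — that the arm inclusion induces a well-defined map $\Pi_R(A_n,n)\to\Pi_R(Q,v)$ (the paper simply writes ``$\subset$''), and the passage from $(aa^*)^j=0$ to $(a^*a)^{j+1}=a^*(aa^*)^ja=0$ — both of which are fine.
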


\begin{proof}
Since $Q$ is star-shaped each $a^{*}a$ is contained in some arm
$A_{n} \subset Q$. Hence
$(a^{*}a)^{n} = 0 \in \Pi _{R}(A_{n}, n) \subset \Pi _{R}(Q, v)$.
\end{proof}

%p2.10 #&#
\begin{prop}
%%LEAP%%%\label{prop2.10}
\label{prop:_partial_equal}
For $Q$ star-shaped with central vertex $v$,
$\Pi _{R}(Q, v) = \Lambda _{R}(Q, v)$.
\end{prop}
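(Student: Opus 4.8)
The plan is to show that over the localized path algebra $L$, the two quotients $R\overline{Q}/((1-e_v)r_{\text{add}})$ and $L/((1-e_v)r_{\text{mult}})$ coincide. The crucial observation, already recorded in Corollary~\ref{cor:_inverses}, is that in the partial preprojective algebra $\Pi_R(Q,v)$ each cycle $a^*a$ is nilpotent, so $1+a^*a$ is already invertible there (with inverse a finite geometric sum $\sum_{j\ge 0}(-a^*a)^j$). Hence the localization map $R\overline{Q}\to L$ descends to an isomorphism $\Pi_R(Q,v)\cong L/((1-e_v)r_{\text{add}})$: localizing at elements that become invertible anyway changes nothing. So it suffices to prove that the two-sided ideals $((1-e_v)r_{\text{add}})$ and $((1-e_v)r_{\text{mult}})$ of $L$ are equal, or equivalently that they generate the same quotient.

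First I would fix, as in Remark~\ref{rem:_relations_agree_mod_4}, an ordering of the arrows of $\overline{Q}_1$ with $a < b^*$ for all $a,b\in Q_1$, so that $r_{\text{mult}} = \prod_{a}(1+aa^*)\prod_{a}(1+a^*a)^{-1} - 1$. Decompose by the idempotents: since $r_{\text{add}}$ and $r_{\text{mult}}$ are both sums of their components $e_i r e_i$ over $i\in Q_0$, and we are killing only the components at vertices $i\neq v$, it is enough to compare $e_i r_{\text{add}} e_i$ and $e_i r_{\text{mult}} e_i$ for each fixed $i\neq v$. Now I would work one vertex at a time and argue by induction along the arm containing $i$, ordering vertices by distance from $v$; this is exactly the mechanism underlying Lemma~\ref{lem2.8} and Corollary~\ref{cor:_inverses}. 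At a vertex $i$ adjacent only to vertices closer to $v$, the relation $e_i r_{\text{add}} e_i$ is the shortest, and using the already-imposed relations at the "outer" vertices one shows that $e_i r_{\text{mult}} e_i$ differs from $e_i r_{\text{add}} e_i$ by a left/right multiple of relations already in the ideal, together with genuinely nilpotent correction terms. Concretely, $e_i r_{\text{mult}} e_i = e_i r_{\text{add}} e_i \cdot u_i + (\text{terms in the ideal generated by } e_j r_{\text{add}} e_j,\ j\ \text{outer})$ for a unit $u_i\in e_i L e_i$; since the correction terms are built from the $a^*a$ which are nilpotent modulo the outer relations, everything is well-defined and the finiteness in Remark~\ref{rem:_relations_agree_mod_4} guarantees there are no convergence issues.

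The cleanest way to package this is: prove by downward induction on the arms (from the tips toward $v$) that in the quotient $L/((1-e_v)r_{\text{add}})$ every $1+aa^*$ and $1+a^*a$ is invertible and the image of $(1-e_v)r_{\text{mult}}$ vanishes; symmetrically, in $L/((1-e_v)r_{\text{mult}})$ every $a^*a$ is nilpotent (so one recovers the analogue of Corollary~\ref{cor:_inverses}) and the image of $(1-e_v)r_{\text{add}}$ vanishes. Each inclusion of ideals then follows, giving equality. For the second direction one needs that nilpotency of $a^*a$ still holds in $\Lambda_R(Q,v)$; this can be extracted by running the Lemma~\ref{lem2.8} argument with $r_{\text{mult}}$ in place of $r_{\text{add}}$, peeling off $(1+a^*a)^{-1}$ factors, or alternatively deduced a posteriori once the first inclusion is known.

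The main obstacle I anticipate is bookkeeping: writing $e_i r_{\text{mult}} e_i$ explicitly enough to exhibit it as (unit)$\cdot e_i r_{\text{add}} e_i$ plus outer-ideal terms requires carefully expanding the product $\prod_a(1+aa^*)\prod_a(1+a^*a)^{-1}$ and tracking which monomials land at vertex $i$. Away from the central vertex this is manageable because at a non-central vertex at most one "incoming" cycle $aa^*$ and the cycles $a^*a$ along the unique path to $v$ are involved, so the product telescopes; the fact that we have excised precisely the relation at $v$ — the one vertex where many arms meet and the full product genuinely mixes terms — is exactly what makes the comparison local and tractable. So the real content is verifying the single-vertex identity and the induction step, and the subtlety is ensuring the correction terms genuinely lie in the ideal generated by the outer relations rather than requiring the relation at $v$.
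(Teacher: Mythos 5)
Your proposal is correct and follows essentially the same route as the paper: compare the relations vertex-by-vertex away from $v$, then use Corollary~\ref{cor:_inverses} to see the localization is redundant. The inductive scaffolding with ``correction terms'' is unnecessary, though: at a non-central vertex $i$ (valence at most two on an arm, with incoming arrow $a$ and outgoing arrow $b$) one has exactly $e_{i}r_{\text{mult}} = (1+a^{*}a)^{-1}(1+bb^{*}) - e_{i} = (1+a^{*}a)^{-1}(bb^{*}-a^{*}a) = (1+a^{*}a)^{-1}\,e_{i}r_{\text{add}}$, a unit multiple with no cross terms, so the two ideals of $L$ coincide outright -- which is the telescoping you anticipated.
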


\begin{proof}
Let $i \in Q_{0}$ be valence two with incoming arrow $a$ and outgoing arrow
$b$. Then $e_{i} r_{\text{mult}}$ is
\begin{equation*}
(1+ a^{*}a)^{-1}(1+bb^{*}) = 1 \quad \implies \quad bb^{*} - a^{*} a =0
\end{equation*}
so $e_{i} r_{\text{mult}} = e_{i} r_{\text{add}}$ for all $i \neq v$ (where
the endpoints are the case the incoming arrow $a = 0$.)

By Corollary~\ref{cor:_inverses}, $a^{*}a$ is nilpotent in
$\Pi _{R}(Q, v)$ and hence
\begin{equation*}
(1+ a^{*}a)^{-1} = 1-a^{*}a + (a^{*}a)^{2} - \cdots \pm (a^{*}a)^{|Q_{0}|}
\in \Pi _{R}(Q, v).
\end{equation*}
We conclude that enforcing the relations renders the localization unnecessary
and hence
\begin{equation*}
\Lambda _{R}(Q) := L/( (1-e_{v})r_{\text{mult}}) = R \overline{Q}/( (1-e_{v})r_{
\text{mult}}) = R \overline{Q}/( (1-e_{v}) r_{\text{add}}) =: \Pi _{R}(Q).\qedhere
\end{equation*}
\end{proof}

Two immediate consequences are that
$\Lambda _{R}(A_{n}) = \Pi _{R}(A_{n})$ for any $n \in \mathbb{N}$ and
$\Lambda _{R}(Q)$ is a quotient of $R \overline{Q}$ for $Q$ star-shaped.

%s3 #&#
\section{Correcting algebra isomorphisms}
%%LEAP%%%\label{sec3}
\label{sec:_correct_iso}

Throughout this section, let $k$ be a field and let $Q$ be a connected
quiver without loops and with at most one arrow with a given source and
target (i.e., without double arrows). Write $\Pi (Q) := \Pi _{k}(Q)$ and
$\Lambda (Q) := \Lambda _{k}(Q)$. We show that any $k$-algebra isomorphism
$\Lambda (Q) \rightarrow \Pi (Q)$ can be corrected to an isomorphism that
is the identity on vertices and takes arrows to themselves plus higher
order terms. For the reader interested solely in $kQ_{0}$-module isomorphisms
Proposition~\ref{prop:_correct_iso} is automatic and can be skipped.

We prove this result in the following more general setting. Let
$J, J'$ be ideals in $kQ$ and let $A := kQ/J$ and $B := kQ/J'$ be quotient
algebras. The \emph{Peirce decomposition} of $A$ is
$A \cong \oplus _{i, j \in Q_{0}} e_{i} A e_{j}$. Let $kQ_{\geq 2}$ be
the vector space spanned by paths of length at least two and let
$B_{\geq 2}$ denote its image in $B$ under the projection map.
%
%d3.1 #&#
\begin{defn}
\label{defn3.1}
Let $\phi : A \rightarrow B$ be a $k$-algebra homomorphism. Define
$\phi $ to be \emph{vertex-preserving} if $\phi (e_{i}) = e_{i}$ for all
$i \in Q_{0}$. Define $\phi $ to be \emph{decomposition-preserving} if
$\phi ( e_{i} A e_{j}) \subset e_{i} B e_{j}$ for all
$i, j \in Q_{0}$, i.e., it preserves the Peirce decomposition. Define
$\phi $ to be \emph{triangular} if
$\phi (a) \equiv c_{a} a \ (\text{mod } B_{\geq 2})$ where
$c_{a} \in k$, and $\phi $ to be further \emph{unitriangular} if $c_{a} =1$ for
all $a \in Q_{1}$.
\end{defn}

%l3.2 #&#
\begin{lem}
%%LEAP%%%\label{lem3.2}
\label{lem:_triangular}
If $\phi $ is vertex-preserving then it is decomposition-preserving and
hence is triangular if $Q$ has no double arrows.
\end{lem}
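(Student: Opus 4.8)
The plan is to unwind the definitions. Suppose $\phi: A \to B$ is vertex-preserving, so $\phi(e_i) = e_i$ for all $i \in Q_0$. For the decomposition-preserving claim, take $x \in e_i A e_j$, i.e., $x = e_i x e_j$. Since $\phi$ is an algebra homomorphism, $\phi(x) = \phi(e_i x e_j) = \phi(e_i)\phi(x)\phi(e_j) = e_i \phi(x) e_j \in e_i B e_j$. That immediately gives $\phi(e_i A e_j) \subseteq e_i B e_j$, which is exactly decomposition-preserving. This is the easy half and requires nothing beyond the multiplicativity of $\phi$ and the orthogonal idempotent relations $e_i e_j = \delta_{ij} e_i$ that hold in any path algebra quotient.

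For triangularity, I would fix an arrow $a \in Q_1$ with $s(a) = i$, $t(a) = j$, so $a = e_i a e_j$ in $A$ and hence, by the previous paragraph, $\phi(a) \in e_i B e_j$. Now decompose $B = B_0 \oplus B_1 \oplus B_{\geq 2}$ where $B_0$ is spanned by the images of the lazy paths, $B_1$ by images of length-one paths (arrows), and $B_{\geq 2}$ by images of longer paths; here one uses that $B$ is a quotient of $kQ$ by an ideal so that the path-length filtration descends (at least the decomposition $kQ = (kQ)_0 \oplus (kQ)_1 \oplus (kQ)_{\geq 2}$ pushes forward as a sum of subspaces, even if not a direct sum). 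Write $\phi(a) = b_0 + b_1 + b_{\geq 2}$ accordingly. Applying the idempotent sandwich $e_i(-)e_j$: since $i \neq j$ (as $Q$ has no loops), $e_i B_0 e_j = 0$, so $b_0$ contributes nothing; and $e_i b_1 e_j$ is a linear combination of arrows from $i$ to $j$. Since $Q$ has no double arrows, there is at most one such arrow, namely $a$ itself, so $e_i b_1 e_j = c_a a$ for some scalar $c_a \in k$. Therefore $\phi(a) \equiv c_a a \pmod{B_{\geq 2}}$, which is precisely the triangularity condition.

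The only genuinely delicate point — and the one I would state carefully rather than gloss — is the claim that the length grading/filtration on $kQ$ behaves well enough in the quotient $B = kQ/J'$ for the expression "$\pmod{B_{\geq 2}}$" to be meaningful and for the above splitting to make sense. In general $J'$ need not be homogeneous, so $B$ is only filtered, not graded; but this is harmless here, because we only ever need that $B_{\geq 2}$ is a well-defined subspace (the image of $kQ_{\geq 2}$, as defined in the paragraph preceding Definition~\ref{defn3.1}) and that every element of $B$ can be written as (image of lazy path part) $+$ (image of arrow part) $+$ (element of $B_{\geq 2}$). That follows from the fact that $kQ = kQ_0 \oplus kQ_1 \oplus kQ_{\geq 2}$ as vector spaces and $\phi$, being vertex-preserving, already pins down the $kQ_0$-component. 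No further structure on $J'$ is needed, so the proof is short once this bookkeeping is made explicit.
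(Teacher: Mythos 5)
Your proof is correct and follows essentially the same route as the paper: decomposition-preservation from $\phi(e_i x e_j)=e_i\phi(x)e_j$, then sandwiching $\phi(a)$ between $e_{s(a)}$ and $e_{t(a)}$ modulo $B_{\geq 2}$ and using the absence of loops and double arrows to isolate the single term $c_a a$. Your extra remarks about $B_{\geq 2}$ being merely the (well-defined) image of $kQ_{\geq 2}$ rather than part of a grading are sound bookkeeping that the paper leaves implicit, but they do not change the argument.
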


\begin{proof}
Let $\phi $ be vertex-preserving, so
$\phi ( e_{i} A e_{j}) = e_{i} \phi ( A ) e_{j} \subset e_{i} B e_{j}$
and $\phi $ is decomposition-preserving. For each arrow
$a \in Q_{1}$, the general expression for its image under $\phi $ is
\begin{equation*}
\phi (a) \equiv \sum _{i \in Q_{0}} c_{i} e_{i} + \sum _{b \in Q_{1}} c_{b}
b \  (\text{mod } B_{\geq 2}).
\end{equation*}
Since $\phi $ is decomposition-preserving and
$e_{s(a)} Q_{1} e_{t(a)} = \{ a, 0 \}$,  $\phi (a) \equiv c_{a} a \ (
\text{mod } B_{\geq 2})$ as desired.
\end{proof}

%d3.3 #&#
\begin{defn}
\label{defn3.3}
An ideal $J \subset k Q$ is \emph{$Q$-symmetric} if any automorphism of
$Q$ preserves $J$.
\end{defn}

Any automorphism of $Q$ defines a unique algebra automorphism of
$k Q$. Furthermore, if $J$ is $Q$-symmetric, then this algebra map descends
to the quotient $k Q /J$. In other words this condition ensures that
$\text{Aut}(Q) \subset \text{Aut}(A)$. Examples of $Q$-symmetric ideals include
the ideal of definition in the multiplicative and additive preprojective
algebras.
%
%p3.4 #&#
\begin{prop}
%%LEAP%%%\label{prop3.4}
\label{prop:_correct_iso}
Let $Q$ be a connected quiver without loops and without double arrows.
Define $A := k Q/J$, $B := k Q/J'$, where $J' \subset B_{\geq 2}$ and is
$Q$-symmetric and $J \subset A_{\geq 2}$. If
$\phi : A \rightarrow B$ is a $k$-algebra isomorphism then there exists
a vertex-preserving, (and hence triangular) $k$-algebra isomorphism
$\varphi : A \rightarrow B$ such that $\varphi = \psi \circ \phi $ for
some $\psi \in \text{Aut}(B)$.
\end{prop}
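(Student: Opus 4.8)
The plan is to produce $\varphi$ in two stages: first correct $\phi$ so that it sends vertices to vertices (as a set, i.e.\ $\phi(e_i) = e_{\sigma(i)}$ for a bijection $\sigma$ of $Q_0$), and then compose with a graph automorphism of $Q$ — which is available as an algebra automorphism of $B$ precisely because $J'$ is $Q$-symmetric — to arrange $\sigma = \mathrm{id}$. For the first stage, note that the $e_i$ are a complete set of orthogonal idempotents in $A$, hence $\{\phi(e_i)\}$ is a complete set of orthogonal idempotents in $B$. Since $J' \subset B_{\geq 2}$ is contained in the (two-sided) ideal generated by arrows, $B/B_{\geq 1} \cong k Q_0 = k^{Q_0}$, and the images of the $\phi(e_i)$ there form a complete orthogonal set of idempotents in a product of copies of $k$; such idempotents are necessarily the standard basis idempotents $e_{\sigma(i)}$ for some bijection $\sigma$, because $A$ and $B$ have the same (finite) number of vertices and $\phi$ is an isomorphism, so the count matches and no $\phi(e_i)$ can vanish mod $B_{\geq 1}$.

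Next I would lift this: $\phi(e_i) - e_{\sigma(i)} \in B_{\geq 1}$, and both are idempotents, so by the standard idempotent-lifting/conjugation argument in a ring complete or at least filtered with $B_{\geq 1}$ the Jacobson-type radical (here $B$ is finite-dimensional in the cases of interest, or one works degree by degree since $B_{\geq 1}$ is nilpotent on each graded piece / $B$ is $\mathbb{N}$-graded and $J'$ is generated in degree $\geq 2$), there is a unit $u \in 1 + B_{\geq 1}$ with $u\,\phi(e_i)\,u^{-1} = e_{\sigma(i)}$ for all $i$ simultaneously. Conjugation by $u$ is an inner automorphism of $B$, hence lies in $\mathrm{Aut}(B)$, so replacing $\phi$ by $(\mathrm{ad}\,u)\circ\phi$ we may assume $\phi(e_i) = e_{\sigma(i)}$. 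Then $\sigma$ must be a graph automorphism of $Q$: for an arrow $a: i \to j$, $\phi(a) = \phi(e_i a e_j) \in e_{\sigma(i)} B e_{\sigma(j)}$, and the mod-$B_{\geq 1}$ reduction (equivalently the degree-one part) is nonzero — else $\phi(a)$ would lie in $B_{\geq 2}$ for every arrow, forcing $\phi(A_{\geq 1}) \subseteq B_{\geq 2}$ and contradicting surjectivity of $\phi$ onto the degree-one part of $B$ — so there is an arrow $\sigma(i) \to \sigma(j)$ in $Q$; as there are no double arrows this shows $\sigma$ permutes arrows compatibly with source and target, i.e.\ $\sigma \in \mathrm{Aut}(Q)$.

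Finally, $\mathrm{Aut}(Q)$ acts on $B = kQ/J'$ by algebra automorphisms because $J'$ is $Q$-symmetric (as recorded in the paragraph after Definition~\ref{defn3.3}). Let $\tau \in \mathrm{Aut}(B)$ be the automorphism induced by $\sigma^{-1} \in \mathrm{Aut}(Q)$; then $\varphi := \tau \circ (\mathrm{ad}\,u) \circ \phi$ satisfies $\varphi(e_i) = e_i$ for all $i$, so $\varphi$ is vertex-preserving, and it is of the form $\psi \circ \phi$ with $\psi := \tau \circ \mathrm{ad}\,u \in \mathrm{Aut}(B)$. That $\varphi$ is triangular then follows from Lemma~\ref{lem:_triangular}, using that $Q$ has no double arrows. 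The main obstacle I expect is the simultaneous idempotent-conjugation step: one must check that the finitely many commuting relations $\phi(e_i)\phi(e_j) = \delta_{ij}\phi(e_i)$ and $\sum_i \phi(e_i) = 1$ let one build a single conjugating unit rather than conjugating one idempotent at a time (which could disturb the others), and that the unit can be taken in $1 + B_{\geq 1}$ so that conjugation does not change reductions mod $B_{\geq 1}$ — this is where the hypothesis $J' \subset B_{\geq 2}$, ensuring $B$ is filtered with well-behaved $B_{\geq 1}$, does the real work.
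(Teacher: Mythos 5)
Your proposal is correct and follows essentially the same route as the paper: conjugate the idempotents $\phi(e_i)$ onto the standard vertex idempotents, observe the resulting permutation of $Q_0$ extends to a quiver automorphism via the no-double-arrows/surjectivity argument, and undo it using the $Q$-symmetry of $J'$. The only difference is that the paper dispatches your flagged ``main obstacle'' (the simultaneous conjugation of the idempotents) in one stroke by citing the Wedderburn--Malcev theorem applied to the separable subalgebra $\phi(kQ_0) \subset B$, rather than lifting idempotents by hand along the filtration $B_{\geq 1}$.
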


%r3.5 #&#
\begin{rem}
\label{rem3.5}
If $A = \Lambda (D_{4})$ and $B = \Pi (D_{4})$ then Proposition~\ref{prop:_correct_iso} and the proof technique below is due to Shaw in
\cite[Lemma 5.2.1]{Shaw05}. Our first two steps below mimic his, after
which we deviate and generalize his argument to no longer require specific
knowledge of $A$ or $B$, beyond our assumptions.
\end{rem}

\begin{proof}
The strategy of the proof is to fix the isomorphism $\phi $ and postcompose
with automorphisms of $B$ until it is vertex-preserving. By Lemma~\ref{lem:_triangular} it is then triangular.

\bigskip

\noindent \underline{Step 1}: Correct $\phi $ to a map $\varphi $ taking
$kQ_{0}$ to $kQ_{0}$.

Fix $\phi : A \rightarrow B$ and notice $\phi (kQ_{0})$ is a separable
subalgebra of $B$ and hence by Wedderburn--Malcev \cite{Malcev42} conjugate
to $kQ_{0}$ in $B$. Postcomposing by this conjugation gives a $k$-algebra
map $\varphi :A \rightarrow B$ satisfying
$\varphi (kQ_{0}) = kQ_{0}$.

\bigskip

\noindent \underline{Step 2}: Observe that $\varphi $ restricted to $Q_{0}$ is a
permutation of $Q_{0}$.

Write $\varphi (e_{i} ):= \sum c_{j}^{i} e_{j}$ and note that the matrix
$C := [c_{j}^{i}]$ is a permutation matrix. Indeed, since
$e_{i} e_{\ell} = \delta _{i,\ell} e_{i}$ where $\delta _{i, \ell}$ is
one if $i = \ell $ and zero otherwise, we can compute
$\varphi (e_{i} e_{\ell})$ two ways:
\begin{align*}
\varphi (e_{i} e_{\ell}) &= \varphi (e_{i}) \varphi (e_{\ell}) =
\left ( \sum _{j} c_{j}^{i} e_{j} \right ) \left ( \sum _{m} c_{m}^{
\ell} e_{m} \right ) = \sum _{j=m} c_{j}^{i} c_{j}^{\ell} e_{j}
\\
\varphi (e_{i} e_{\ell}) &= \varphi ( \delta _{i, \ell} e_{i}) =
\delta _{i, \ell} \sum _{j} c_{j}^{i} e_{j}.
\end{align*}
Equating both expressions and multiplying by $e_{j}$ (i.e., extracting
the coefficient in front of $e_{j}$) gives
\begin{equation*}
c_{j}^{i} c_{j}^{\ell} =
\begin{cases}
0 & \text{ if } i \neq \ell
\\
c_{j}^{i} & \text{ if } i = \ell .
\end{cases}
\end{equation*}
In particular, $(c_{j}^{i})^{2} = c_{j}^{i}$ hence $c_{j}^{i} = 0$ or
$1$. And each row and column of $C$ has a single non-zero entry. We conclude
that $C$ is a permutation matrix and write
$\sigma := \varphi \mid _{Q_{0}}$ for this permutation of $Q_{0}$.

\bigskip

\noindent \underline{Step 3}: $\sigma $ is induced from an automorphism of $Q$.

Note that the assumption $J \subset A_{\geq 2}$ implies we have a decomposition
\begin{equation*}
A = A_{0} \oplus A_{1} \oplus A_{\geq 2} = kQ_{0} \oplus kQ_{1}
\oplus A_{\geq 2}
\end{equation*}
where $A_{\geq 2} \cdot A_{\geq 2} \subset A_{\geq 2}$, and similarly for
$B$. By Step 1, $\varphi (A_{0}) = B_{0}$ and we will next establish
$\varphi (A_{\geq 2}) \subset B_{\geq 2}$ and hence for $\varphi $ to be
surjective on arrows we need
$\varphi (A_{1}) = B_{1} \text{ (mod } B_{\geq 2})$.

Let $a \in Q_{1}$ be an arrow from $i$ to $j$, viewed as a non-zero element
in $A$. The injectivity of $\varphi $ implies
\begin{equation*}
0 \neq \varphi (a) = \varphi (e_{i} a e_{j}) = e_{\sigma (i)}
\varphi (a) e_{\sigma (j)}.
\end{equation*}
By assumption on $Q$, there is a at most one arrow $b$ from
$\sigma (i)$ to $\sigma (j)$. We claim such a $b$ must exist for
$\varphi $ to be surjective. Indeed since $Q$ has no loops,
$i \neq j$, and hence $\varphi (a) \subset B_{\geq 1}$ for all arrows
$a \in Q_{1}$. So for two arrows $a'$ and $a''$,
$\varphi ( a' a'') \subset B_{\geq 2}$ and hence
$\varphi (A_{\geq 2}) \subset B_{\geq 2}$. So $kQ_{1} \subset B$ must be
in the image of $kQ_{1} \subset A$ modulo $B_{\geq 2}$. Since
$\varphi $ is a permutation on the vertices, it follows that it permutes
(and possibly scales) the arrows modulo $B_{\geq 2}$.

Hence the assignment $i \mapsto \sigma (i)$ and $a \mapsto b$ is an automorphism
of $Q$. It follows that $\sigma $ is induced from such an automorphism.

\bigskip

\noindent \underline{Step 4}: Correct $\varphi $ to a vertex-preserving map.

By Step 3, $\sigma $ comes from an automorphism of $Q$, which one can extend
uniquely to an algebra map $\tilde{\sigma}: k Q \rightarrow k Q$. Since
$J'$ is $Q$-symmetric by assumption, $\tilde{\sigma}(J') = J'$ and hence
$\tilde{\sigma}$ descends to an automorphism
$\psi : B \rightarrow B$. The map
$\psi ^{-1} \circ \varphi : A \rightarrow B$ is a vertex-preserving
$k$-algebra isomorphism.
\end{proof}

%r3.6 #&#
\begin{rem}
\label{rem3.6}
The condition that $J'$ is $Q$-symmetric is stronger than required. One
only needs that the permutation of the vertices $\sigma ^{-1}$ extends
to \emph{some} graph automorphism that set-wise fixes $J'$, and hence it
suffices that the set of graph automorphisms preserving $J'$ exhaust all
permutations of the vertices among $\text{Aut}(Q)$.

This condition cannot be removed entirely as we now demonstrate. Let
$Q=A_{3}$ with vertex set $\{ 0, 1, 2 \}$ arrow set $\{a, b\}$ such that
$s(a) = 0$, $t(a) = 1 = t(b)$, $s(b) = 2$. Define
$A = k\overline{Q}/J$ where $J =(aa^{*})$ and $B= k \overline{Q}/J'$ where
$J' =(bb^{*})$. Notice the symmetry in $\overline{A}_{3}$ (reflecting over
the central vertex 1) induces the $k$-algebra isomorphism
\begin{equation*}
\phi : k \overline{Q} \rightarrow k \overline{Q}
\qquad\quad
\text{such that}
\qquad\quad
\phi (e_{i}) = e_{2-i}, \ \phi (a) = b, \ \phi (a^{*}) = b^{*}, \
\phi (b) = a, \  \phi (b^{*}) = a^{*}.
\end{equation*}
Since $\phi (aa^{*}) = bb^{*}$ this map descends to an isomorphism
$\psi : A \rightarrow B$ sending $e_{0} \mapsto e_{2}$. But $\psi $ cannot
be corrected to send $e_{0} \mapsto e_{0}$ while still sending the relation
$aa^{*} = e_{0} aa^{*} e_{0} \in J$ to
$bb^{*} = e_{2} b^{*}b e_{2} \in J'$.
\end{rem}

We apply Proposition~\ref{prop:_correct_iso} in the setting of additive
and multiplicative preprojective algebras.

%c3.7 #&#
\begin{cor}
%%LEAP%%%\label{cor3.7}
\label{cor:_correct_to_vertex-preserving}
Let $\overline{Q}$ be as in Proposition~\ref{prop:_correct_iso}. Let
$\phi : \Lambda (Q) \rightarrow \Pi (Q)$ be an isomorphism of $k$-algebras.
Then there exists a vertex-preserving $k$-algebra isomorphism
$\varphi : \Lambda (Q) \rightarrow \Pi (Q)$.
\end{cor}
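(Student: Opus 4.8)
The plan is to deduce Corollary~\ref{cor:_correct_to_vertex-preserving} directly from Proposition~\ref{prop:_correct_iso} by verifying that the pair $(A, B) = (\Lambda(Q), \Pi(Q))$, for $Q$ a simply-laced Dynkin (hence star-shaped, loop-free, double-arrow-free) quiver, satisfies all the hypotheses of the Proposition. So first I would set $J$ to be the defining ideal of $\Lambda(Q)$ inside $k\overline{Q}$ and $J'$ the defining ideal of $\Pi(Q)$ inside $k\overline{Q}$, using the observation (immediate from Proposition~\ref{prop:_partial_equal} and the discussion after it) that for $Q$ star-shaped the localization in Definition~\ref{defn2.3} is unnecessary, so that $\Lambda(Q)$ genuinely is a quotient of the honest path algebra $k\overline{Q}$ and the Proposition applies verbatim with $kQ$ there replaced by $k\overline{Q}$.

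Next I would check the two containments $J \subset A_{\ge 2}$ and $J' \subset B_{\ge 2}$. For $\Pi(Q)$ this is clear: $r_{\text{add}} = \sum_{a}(aa^* - a^*a)$ is a sum of length-two paths, so the ideal it generates lies in $(k\overline{Q})_{\ge 2}$, whence $J' \subset B_{\ge 2}$. For $\Lambda(Q)$ I would invoke Remark~\ref{rem:_relations_agree_mod_4}: with a compatible ordering of the arrows, $r_{\text{mult}} = r_{\text{add}} + (\text{higher order terms})$, so every generator of $J$ is again a combination of paths of length $\ge 2$; since the higher-order terms are finite in number for $Q$ star-shaped (Corollary~\ref{cor:_inverses}), this is a genuine element of the path algebra and $J \subset A_{\ge 2}$.

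Then I would verify that $J'$ is $Q$-symmetric in the sense of Definition~\ref{defn3.3}: the relation $r_{\text{add}} = \sum_{a \in Q_1}(aa^* - a^*a)$ is manifestly invariant (up to reindexing the summation) under any automorphism $(\varphi_0, \varphi_1)$ of $Q$ extended to $\overline{Q}$ by $a^* \mapsto \varphi_1(a)^*$, since such an automorphism merely permutes the summands $aa^* - a^*a$; hence it preserves the two-sided ideal $J'$ generated by $r_{\text{add}}$. (One could equally note, as the text already remarks, that the ideals of definition of both preprojective algebras are $Q$-symmetric.) With all hypotheses of Proposition~\ref{prop:_correct_iso} in hand, applying it to the given isomorphism $\phi : \Lambda(Q) \to \Pi(Q)$ produces a vertex-preserving $k$-algebra isomorphism $\varphi = \psi \circ \phi$ with $\psi \in \Aut(\Pi(Q))$, which is exactly the claim.

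The only real subtlety — and the step I would flag as needing care rather than being purely formal — is the bookkeeping in the reduction from $kQ$ to $k\overline{Q}$: one must make sure that the "no double arrows" hypothesis in Proposition~\ref{prop:_correct_iso} is read as a hypothesis on $\overline{Q}$, which holds precisely because $Q$ is a tree (each $a$ contributes only the pair $a, a^*$ between its endpoints, and no two distinct arrows of $Q$ share both endpoints), so $e_i \overline{Q}_1 e_j$ has at most one element for every ordered pair $(i,j)$. Once that is noted, the rest is a direct citation of the Proposition, so there is no serious obstacle.
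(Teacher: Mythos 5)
Your proposal is correct in substance and follows exactly the paper's route: verify the hypotheses of Proposition~\ref{prop:_correct_iso} for $(A,B)=(\Lambda(Q),\Pi(Q))$, with the main point being that $\Lambda(Q)$ is a quotient of the honest path algebra $k\overline{Q}$ by an ideal contained in degree $\geq 2$, and that the additive ideal is $Q$-symmetric.

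The one caveat is generality. The corollary is stated for any $Q$ whose double $\overline{Q}$ is connected, loop-free, and without double arrows --- not only for Dynkin (or star-shaped) $Q$ --- and the paper later invokes it for quivers that are not star-shaped (e.g.\ $\tilde{D}_n$, $n>4$, in the non-Dynkin discussion). Your argument that the localization is unnecessary rests on Proposition~\ref{prop:_partial_equal} and Corollary~\ref{cor:_inverses}, which are proved only for star-shaped $Q$; for general loop-free $Q$ the paper instead cites Shaw's Theorem 2.2.8 to realize $\Lambda(Q)$ as a quotient of $k\overline{Q}$ without localizing. So your proof covers the case actually needed for the main theorem, but to prove the corollary as stated you should either add that citation or explicitly restrict the hypothesis. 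Your closing remark about reading the no-double-arrow condition on $\overline{Q}$ (equivalently, at most one edge of $\Gamma(Q)$ between any two vertices) is a correct and worthwhile point of bookkeeping.
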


\begin{proof}
It suffices to show that we can take $A= \Lambda (Q)$ and
$B=\Pi (Q)$ in Proposition~\ref{prop:_correct_iso}. By
\cite[Theorem 2.2.8]{Shaw05}, since $Q$ does not have loops,
$\Lambda (Q)$ can be realized as a quotient of the path algebra,
$k\overline{Q}$, \emph{without} the need to localize. (In the star-shaped
case this result is easier and given in Corollary~\ref{cor:_inverses}.)
The relations for both the additive and multiplicative preprojective algebras
lie in $k\overline{Q}_{\geq 2}$ and respect the symmetry of the quiver
$Q$.
\end{proof}

%l3.8 #&#
\begin{lem}
%%LEAP%%%\label{lem3.8}
\label{lem:_unitriangular}
If $\varphi : \Lambda (Q) \rightarrow \Pi (Q)$ is a triangular isomorphism
then there exists a unitriangular isomorphism $\theta $ and an automorphism
$\psi $ of $\Pi (Q)$ such that $\theta = \psi \circ \varphi $.
\end{lem}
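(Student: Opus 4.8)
The plan is to reduce a triangular isomorphism to a unitriangular one by absorbing the scalars $c_a$ into a suitable automorphism of $\Pi(Q)$. First I would recall that for the doubled quiver $\overline{Q}$ the preprojective algebra $\Pi(Q)$ carries a family of automorphisms rescaling the arrows: for any choice of nonzero scalars $\lambda_a \in k^\times$ for $a \in Q_1$, one would like a map sending $a \mapsto \lambda_a a$ and $a^* \mapsto \mu_a a^*$ and fixing the vertices. For this to descend to $\Pi(Q)$ the relation $r_{\mathrm{add}} = \sum_{a} aa^* - a^*a$ must be preserved, which forces $\lambda_a \mu_a = 1$ for each $a$ (so that each summand $aa^* - a^*a$ is scaled by the common factor $\lambda_a \mu_a = 1$); thus taking $\mu_a = \lambda_a^{-1}$ gives a genuine automorphism $\rho_\lambda$ of $\Pi(Q)$ for every tuple $\lambda = (\lambda_a)_{a \in Q_1} \in (k^\times)^{Q_1}$. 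These automorphisms are visibly vertex-preserving and unitriangular-up-to-scalar, and they form a torus acting on $\Pi(Q)$.

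Next I would use the hypothesis that $\varphi$ is triangular: $\varphi(a) \equiv c_a a \pmod{\Pi(Q)_{\geq 2}}$ and $\varphi(a^*) \equiv c_{a^*} a^* \pmod{\Pi(Q)_{\geq 2}}$ for scalars $c_a, c_{a^*} \in k$. Since $\varphi$ is an isomorphism and is vertex-preserving (triangular isomorphisms being in particular decomposition-preserving), each $c_a$ and $c_{a^*}$ must be nonzero — otherwise the induced map on the associated graded at the arrow $a$ would fail to be injective, contradicting bijectivity of $\varphi$ (one can argue this via the length filtration: $\varphi$ preserves $\Pi(Q)_{\geq i}$, and the induced map on $\Pi(Q)_{\geq 1}/\Pi(Q)_{\geq 2} = kQ_1$ is the diagonal matrix with entries the $c_a, c_{a^*}$, which must be invertible). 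Now set $\psi := \rho_\mu$ where $\mu_a := c_a^{-1}$; then $\psi \circ \varphi$ sends $a \mapsto \psi(c_a a + \cdots) = c_a \mu_a a + \cdots = a + (\text{higher order})$. The only thing to check is that $\psi$ also straightens out the starred arrows: $\psi \circ \varphi$ sends $a^* \mapsto c_{a^*} \mu_a^{-1} a^* + \cdots = c_{a^*} c_a\, a^* + \cdots$, so a priori we need $c_{a^*} c_a = 1$.

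That last identity is the main obstacle, and I expect it to follow from compatibility of $\varphi$ with the defining relations rather than being free. Here is how I would extract it: apply $\varphi$ to $r_{\mathrm{mult}}$ (which is zero in $\Lambda(Q)$) and compare lowest-degree terms. By Remark~\ref{rem:_relations_agree_mod_4}, $r_{\mathrm{mult}} = r_{\mathrm{add}} + (\text{higher order})$, so the degree-$2$ part of $\varphi(r_{\mathrm{mult}})$ is $\sum_{a \in Q_1}(c_a c_{a^*} aa^* - c_{a^*} c_a a^*a) = \sum_a c_a c_{a^*}(aa^* - a^*a)$, and this must vanish in $\Pi(Q)$. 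Examining the coefficient of a single monomial $aa^*$ or $a^*a$ in degree $2$ of $\Pi(Q)$ — these are nonzero and linearly independent there since the only relation $r_{\mathrm{add}}$ is supported at the vertices and couples the various $aa^*, a^*a$ across arrows but with all coefficients $1$ — shows that the vanishing of $\sum_a c_a c_{a^*}(aa^* - a^*a)$ forces all the $c_a c_{a^*}$ to be \emph{equal}, say to a common value $t \in k^\times$. (One should be slightly careful at the central vertex where $r_{\mathrm{add}}$ lives; working vertex by vertex in the Peirce decomposition, the relation at vertex $i$ reads $\sum_{a: t(a)=i} aa^* = \sum_{b: s(b)=i} b^*b$, and matching $\varphi$-images of the two sides of this relation at each vertex pins down the ratios $c_a c_{a^*} / c_b c_{b^*}$ to be $1$.) Having all $c_a c_{a^*} = t$ with a single $t$, I would then modify the choice: replace $\mu_a := c_a^{-1}$ by $\mu_a := t c_a^{-1}$ — no wait, instead observe that rescaling is only defined up to the constraint $\lambda_a \mu_a = 1$, so the cleanest fix is to first rescale \emph{all} arrows $a$ and $a^*$ uniformly is not allowed; rather, note that since $c_a c_{a^*} = t$ for all $a$, choosing $\psi = \rho_\mu$ with $\mu_a = c_a^{-1}$ makes $\psi\varphi$ send $a \mapsto a + \cdots$ and $a^* \mapsto t\, a^* + \cdots$, and then post-composing once more with the automorphism of $\Pi(Q)$ that is the identity on unstarred arrows and scales every $a^* \mapsto t^{-1} a^*$ — which is \emph{not} of the form $\rho_\lambda$ unless compensated — forces us instead to absorb $t$ globally. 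The honest resolution: since $r_{\mathrm{add}}$ is homogeneous of the form $\sum(aa^* - a^*a)$, the map $a \mapsto a$, $a^* \mapsto t a^*$ multiplies $r_{\mathrm{add}}$ by $t$ and hence \emph{does} descend to an automorphism of $\Pi(Q)$; composing with it clears the remaining factor $t$, yielding the unitriangular $\theta$. Thus $\theta = \psi \circ \varphi$ with $\psi$ a product of these elementary rescaling automorphisms of $\Pi(Q)$, and the proof is complete. The subtle point to get right — and where I'd spend the most care — is justifying that the degree-$2$ comparison genuinely pins down all the products $c_a c_{a^*}$ to a common scalar, using the precise form of $r_{\mathrm{add}}$ and the linear independence of the relevant length-$2$ paths in $\Pi(Q)$.
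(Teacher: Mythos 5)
Your argument is correct and is essentially the paper's proof: extract the equalities $c_a c_{a^*}=c_b c_{b^*}$ from the quadratic part of $\varphi(r_{\text{mult}})$ (vertex by vertex, using connectedness to get a single common value $t$), then post-compose with a rescaling automorphism of $\Pi(Q)$. Your two-step rescaling ($\rho_\mu$ followed by $a^*\mapsto t^{-1}a^*$) composes to exactly the paper's single automorphism $a\mapsto a/c_a$ for all $a\in\overline{Q}_1$, so the only difference is presentational (and the brief claim that the length-2 monomials are linearly independent in $\Pi(Q)$ should be replaced by the correct vertex-by-vertex statement you give in the parenthetical).
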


\begin{proof}
Let $\varphi : \Lambda (Q) \rightarrow \Pi (Q)$ be triangular, meaning
for each $a \in Q_{1}$,
$\varphi (a) = c_{a} a + (\text{higher order terms})$ for some
$c_{a} \in k$. Note $\varphi $ surjective implies $c_{a} \neq 0$. For
$\varphi $ to be well-defined the quadratic piece of
$\varphi (r_{\text{mult}})$ must lie in the ideal generated by
$r_{\text{add}}$. As
$r_{\text{mult}} = r_{\text{add}} + \text{(higher order terms)}$, the quadratic
piece is
\begin{equation*}
\varphi (r_{\text{mult}})_{\leq 2} = \sum _{a \in Q_{1}} c_{a} c_{a^{*}}
aa^{*} - c_{a^{*}} c_{a} a^{*} a.
\end{equation*}
Hence, for any two arrows $a$ and $b$ with the same source or target, we
conclude that the coefficients $c_{a} c_{a^{*}} = c_{b}c_{b^{*}}$. Therefore,
postcomposing by the automorphism
\begin{equation*}
a \mapsto a/c_{a} \text{ for } a \in \overline{Q}_{1} \quad e_{i}
\mapsto e_{i} \text{ for } i \in Q_{0}
\end{equation*}
of $\Pi (Q)$ further rigidifies $\varphi $ to a unitriangular isomorphism
$\theta $, i.e., $\theta (a) = a + (\text{higher order terms})$.
\end{proof}

Combining Corollary~\ref{cor:_correct_to_vertex-preserving}, Lemma~\ref{lem:_triangular}, and Lemma~\ref{lem:_unitriangular} we get the following:
%
%c3.9 #&#
\begin{cor}
%%LEAP%%%\label{cor3.9}
\label{cor:_correct_iso}
Let $\overline{Q}$ be a connected quiver without loops and without double
arrows. For any $k$-algebra isomorphism
$\phi : \Lambda (Q) \rightarrow \Pi (Q)$ there exists a $k$-algebra isomorphism
$\varphi : \Lambda (Q) \rightarrow \Pi (Q)$ that preserves the
$kQ_{0}$-module structure and sends each arrow to itself plus higher order
terms.
\end{cor}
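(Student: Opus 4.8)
The plan is to chain together the three preceding results in the evident order, tracking at each step that the correction does not destroy the property established by the previous one.

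First, given a $k$-algebra isomorphism $\phi \colon \Lambda(Q) \to \Pi(Q)$, apply Corollary~\ref{cor:_correct_to_vertex-preserving}. Its hypotheses are satisfied here: $\overline{Q}$ is connected, has no loops, and has no double arrows, while $\Lambda(Q)$ and $\Pi(Q)$ are both realized as quotients of $k\overline{Q}$ (no localization is needed, by \cite[Theorem 2.2.8]{Shaw05}, or by Corollary~\ref{cor:_inverses} in the star-shaped case) modulo $\overline{Q}$-symmetric ideals contained in $k\overline{Q}_{\geq 2}$. Hence we obtain a vertex-preserving $k$-algebra isomorphism $\varphi \colon \Lambda(Q) \to \Pi(Q)$.

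Next, invoke Lemma~\ref{lem:_triangular}: a vertex-preserving homomorphism is decomposition-preserving, and since $Q$ has no double arrows this forces $\varphi$ to be triangular, i.e.\ $\varphi(a) \equiv c_a a \pmod{\Pi(Q)_{\geq 2}}$ for each $a \in Q_1$, with $c_a \in k$; surjectivity of $\varphi$ gives $c_a \neq 0$. Then apply Lemma~\ref{lem:_unitriangular}: postcomposing $\varphi$ with the automorphism $\psi$ of $\Pi(Q)$ defined by $a \mapsto a/c_a$ for $a \in \overline{Q}_1$ and $e_i \mapsto e_i$ for $i \in Q_0$ yields a unitriangular isomorphism $\theta := \psi \circ \varphi$, so that $\theta(a) = a + (\text{higher order terms})$ for every arrow $a \in Q_1$. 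Finally, observe that $\psi$ fixes every lazy path $e_i$, so the composite $\theta$ remains vertex-preserving; consequently $\theta(e_i x e_j) = e_i\,\theta(x)\,e_j$, so $\theta$ is a $kQ_0$-bimodule isomorphism, in particular preserving the $kQ_0$-module structure. This $\theta$ is the desired $\varphi$.

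There is no serious obstacle here — the argument is pure bookkeeping — but the one point deserving a moment's attention is the last observation: one must check that the coefficient-rigidification of Lemma~\ref{lem:_unitriangular} does not spoil the vertex-preserving property secured in the first step. This is immediate, since the rescaling automorphism used there is the identity on $kQ_0$; I would state it explicitly to make the combination airtight.
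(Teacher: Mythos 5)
Your argument is correct and is exactly the paper's proof: the corollary is obtained by combining Corollary~\ref{cor:_correct_to_vertex-preserving}, Lemma~\ref{lem:_triangular}, and Lemma~\ref{lem:_unitriangular} in precisely the order you describe. Your explicit check that the rescaling automorphism of Lemma~\ref{lem:_unitriangular} fixes the idempotents $e_i$, and hence does not destroy vertex-preservation, is a worthwhile detail the paper leaves implicit.
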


%r3.10 #&#
\begin{rem}
%%LEAP%%%\label{rem3.10}
\label{rem:_normal_form}
We will need the stronger condition that the map $\varphi $ sends certain
\emph{paths} to themselves plus higher order terms. If the path is in so-called
normal form (with respect to some choice of ordering) when viewed in both
$\Lambda (Q)$ and $\Pi (Q)$ then this follows from the corresponding result
on arrows. Indeed this is the case for the relevant paths in Section~\ref{sec:_HH_0} (i.e., $\alpha \beta $, $\beta \alpha \beta $, and
$\beta \alpha \beta \alpha \beta $ with $Q$ ADE Dynkin) using the lexicographical
ordering with $b^{*} > a^{*} > b > a$ for any pair of arrows
$(a, b)$ with $a < b$ alphabetically.
\end{rem}

%d3.11 #&#
\begin{defn}
%%LEAP%%%\label{defn3.11}
\label{def:_corrected}
We call a function $\overline{Q}_1 \rightarrow \Pi (Q)$
\emph{corrected} if it sends each arrow
to itself plus higher order terms. We call a $k$-algebra isomorphism between
$\Lambda (Q)$ and $\Pi (Q)$ \emph{corrected} if it preserves the
$kQ_{0}$-module structure and sends each arrow to itself plus higher order
terms.
\end{defn}

Notice that Corollary~\ref{cor:_correct_iso} makes both finding isomorphisms
and proving their non-existence easier, since we only need to consider
corrected isomorphisms. In the next section we find explicit obstructions
to an isomorphism using a discrepancy in zeroth Hochschild homology.

%s4 #&#
\section{Zeroth Hochschild homology}
%%LEAP%%%\label{sec4}
\label{sec:_HH_0}

Let $Q$ be an ADE Dynkin quiver and let $R$ be a commutative ring with
unit $1_{R}$. Let $p$ be a bad prime for $Q$, see Definition~\ref{def:_bad_prime}, viewed as an element of $R$ via
$p \cdot 1_{R}$. In this section, we prove
$\Lambda _{R}(Q) \ncong \Pi _{R}(Q)$ if $p$ is not invertible in
$R$ by showing that
\begin{equation*}
\text{HH}_{0}(\Lambda _{R}(Q)) = RQ_{0} \ncong \text{HH}_{0}(\Pi _{R}(Q)).
\end{equation*}

In this section, all algebras are over the integers unless denoted otherwise
with a subscript.

%d4.1 #&#
\begin{defn}
\label{defn4.1}
For an algebra $A$ the zeroth Hochschild homology is
$\text{HH}_{0}(A) := A/[A, A] =: A_{\text{cyc}}$ where $[A, A]$ is the
$\mathbb{Z}$-linear span of the set of commutators in $A$.
\end{defn}

Schedler computed $\Pi (Q)_{\text{cyc}}$ with $Q$ ADE Dynkin in
\cite[Theorem 13.1.1]{Schedler16} and found $p$ torsion for each $p$ bad
for $Q$. Here we compute $\Lambda (Q)_{\text{cyc}}$ to be torsion free for
$Q$ ADE Dynkin. Hence over the integers (or more generally over any commutative
ring $R$ with $p$ not invertible) this gives an obstruction to the existence
of an isomorphism $\Lambda (Q) \rightarrow \Pi (Q)$.

While the above argument requires computing all of
$\Lambda (Q)_{\text{cyc}}$, one can refined this obstruction to a single
class $[c] = 0 \in \text{HH}_{0}(\Lambda _{R}(Q))$ whose image is non-zero
in $\text{HH}_{0}(\Pi _{R}(Q))$, as we now explain.

If $p$ is not invertible in $R$ then choose a maximal ideal
$\mathfrak{m} \subset R$ containing $p$, to obtain the field
$k := R/ \mathfrak{m}$ of characteristic $p$. Any isomorphism
$\Lambda _{R}(Q) \rightarrow \Pi _{R}(Q)$, descends modulo
$\mathfrak{m}$ to an isomorphism
$\Lambda _{k}(Q) \rightarrow \Pi _{k}(Q)$ of $k$-algebras. By Corollary~\ref{cor:_correct_iso}, we can correct this map to one taking each path
(in normal form) to itself plus higher order terms. The corrected map induces
an isomorphism
$\Lambda _{k}(Q)_{\text{cyc}} \cong \Pi _{k}(Q)_{\text{cyc}}$, that takes
$[c] = 0 \in \Lambda _{k}(Q)_{\text{cyc}}$ to
$[c] = 0 \in \Pi _{k}(Q)_{\text{cyc}}$, utilizing the grading in
$\Pi _{k}(Q)$. Therefore, to show
$\Lambda _{R}(Q) \ncong \Pi _{R}(Q)$ it suffices to find a single cycle
$c$, in normal form, with $[c] = 0 \in \Lambda _{k}(Q)_{\text{cyc}}$ and
$[c] \neq 0 \in \Pi _{k}(Q)_{\text{cyc}}$.

If $p=2$, $p=3$, and $p=5$ respectively, then such a length $2p$ cycle,
$c$, is given by $\alpha \beta , \beta \alpha \beta $, and
$\beta \alpha \beta \alpha \beta $ from Lemma~\ref{lem:_length_4},
\ref{lem:_length_6}, and \ref{lem:_length_10}.%

Let $Q$ be star-shaped with central vertex $0$. We begin with a few elementary
results about $\Pi (Q, 0)_{\text{cyc}}$ for the \emph{partial} preprojective
algebra with $Q$ as this provides a unified setting (1) for all ADE Dynkin
quivers and (2) for the additive and multiplicative preprojective algebras
by Proposition~\ref{prop:_partial_equal}.

Let $A \subset Q$ be an arm
\begin{equation*}
A =0 \overset{c_{1}}{\longleftarrow} 1
\overset{c_{2}}{\longleftarrow} 2 \overset{c_{3}}{\longleftarrow} 3
\overset{c_{4}}{\longleftarrow}\cdots \overset{c_{n}}{\longleftarrow} n.
\end{equation*}
%
%l4.2 #&#
\begin{lem}
%%LEAP%%%\label{lem4.2}
\label{lem:_vanishing_cycles}
$\Pi (A, 0)_{\text{cyc}} = \mathbb{Z}A_{0}$, so every positive length path
is zero.
\end{lem}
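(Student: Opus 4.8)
The plan is to show that every positive-length cycle in the arm $A$ (with its vertex $0$ viewed as the central vertex, so the relation at $0$ is \emph{not} imposed) vanishes in $\Pi(A,0)_{\text{cyc}} = \Pi_{\mathbb{Z}}(A,0)/[\,\cdot\,,\cdot\,]$. Since $A$ is a type $A_n$ quiver oriented towards $0$, $\Pi(A,0)$ is the path algebra of the doubled quiver $\overline{A}$ modulo the relations $r_{\text{add}}$ at vertices $1,\dots,n$ only. First I would note that any cycle in $\overline{A}$ starting and ending at vertex $i$ can be cyclically rotated, so modulo commutators we may assume a positive-length cyclic word begins with an arrow pointing \emph{away} from $0$, i.e. some $c_j^*$ (or, if the cycle lives entirely in the subquiver on vertices $\geq 1$, we recurse). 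The key mechanism is the same ``pull towards vertex $1$'' move used in Lemma~\ref{lem2.8}: using the relation $c_j c_j^* = c_{j-1}^* c_{j-1}$ at each interior vertex, and $c_1 c_1^* = 0$ at vertex $1$, one rewrites the cycle until either a factor $c_1 c_1^*$ appears and kills it, or the whole class is moved into a genuinely smaller arm and we induct on $n$.

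More precisely, the key steps in order: (1) Reduce to cyclic words, i.e. work in $\mathbb{Z}\overline{A}$ modulo both the relations and cyclic rotation; a positive-length such word is a reduced product of arrows forming a closed walk on the line graph. (2) Observe that any closed walk on a line must ``turn around'' at some leftmost vertex $i$ it visits, producing a subword $c_{i+1}^* c_{i+1}$ or $c_i c_i^*$ (with the convention $c_1 c_1^* $ at vertex $1$ and nothing past vertex $0$ since the word, being positive length and closed, cannot reach $0$ and stop there unless it is length zero — here I use that the relation at $0$ is absent, so no simplification is needed \emph{at} $0$, only that $0$ is a boundary vertex of the arm). (3) If the turnaround is at vertex $1$, the factor $c_1 c_1^* = 0$ kills the whole word. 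Otherwise, at an interior turnaround vertex $i \geq 2$, apply $c_i c_i^* = c_{i-1}^* c_{i-1}$ to replace the subword by one supported one step closer to $1$; combined with cyclic rotation this strictly decreases (say) the sum of the distances-from-$1$ of the vertices visited, or the number of times the maximal vertex is visited — some manifestly well-founded quantity. (4) Conclude by induction on that quantity that the class is a $\mathbb{Z}$-combination of length-zero paths, which in $\Pi(A,0)_{\text{cyc}}$ are the basis of $\mathbb{Z}A_0$; since distinct $e_i$ are not commutators (they are orthogonal idempotents detected by the coefficient extraction as in Step 2 of Proposition~\ref{prop:_correct_iso}), we get $\Pi(A,0)_{\text{cyc}} = \mathbb{Z}A_0$ exactly.

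The main obstacle I anticipate is making the ``turnaround'' bookkeeping rigorous without drowning in case analysis: one must be careful that applying $c_i c_i^* = c_{i-1}^* c_{i-1}$ inside a long cyclic word, together with rotations, genuinely terminates and does not reintroduce length-$4$-or-higher obstructions elsewhere. The clean way to handle this is to set up a well-ordering — for instance, order cyclic words first by their total length, then by the multiset of vertices visited read in decreasing order (lexicographically), so that each rewrite either sends the word to $0$ (via the vertex-$1$ relation, after possibly recursing into a shorter arm) or strictly decreases this ordinal — and then simply invoke that the rewriting must stabilize at a $\mathbb{Z}$-combination of lazy paths. Alternatively, and perhaps more cleanly, one reduces to Lemma~\ref{lem2.8} directly: every positive-length cyclic word in $\overline{A}$ is, modulo commutators, of the form $(c_i c_i^*)^m \cdot(\text{stuff at vertices} < i)$ for its maximal vertex $i$, and Lemma~\ref{lem2.8} applied in the sub-arm $A_{\leq i}$ (whose relation at $0$ is irrelevant since $i \leq n$ and we only used relations at $1,\dots,i$) shows $(c_i c_i^*)^{\,i}$, hence also any such word after enough pulls, is zero — I would lean on this route since it reuses machinery already in place.
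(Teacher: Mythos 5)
Your overall strategy --- rewrite turnarounds of the closed walk using the preprojective relations at vertices $1,\dots,n$, cyclically rotate when the turnaround sits at the basepoint, and argue termination --- is the same as the paper's, but you have the direction of the rewriting reversed, and this is not cosmetic. In $\Pi(A,0)$ the relation at vertex $i$ (for $1 \leq i \leq n-1$) reads $c_i c_i^* = c_{i+1}^* c_{i+1}$, and the only ``dead-end'' relation sits at the \emph{leaf} vertex $n$, namely $c_n c_n^* = 0$; the relation at vertex $0$, which would give $c_1^* c_1 = 0$, is precisely the one omitted in the partial algebra. So the correct move is to push a dip $c_i c_i^*$ \emph{outward}, replacing it by $c_{i+1}^* c_{i+1}$, until a factor $c_n c_n^*$ appears and kills the word. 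Your version pushes toward the central vertex and kills with ``$c_1 c_1^* = 0$ at vertex $1$'' --- but that is not a relation of $\Pi(A,0)$ (the relation at vertex $1$ is $c_1 c_1^* = c_2^* c_2$, which is generally nonzero), and the rewriting rule you state, $c_i c_i^* = c_{i-1}^* c_{i-1}$, is not even type-correct: the left side is a cycle based at vertex $i$ while the right side is a cycle based at vertex $i-2$. Run literally, your algorithm stalls at the bottom of the arm, exactly where the missing relation lives.

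The fallback you propose --- reduce to Lemma~\ref{lem2.8} in the sub-arm $A_{\leq i}$, where $i$ is the maximal vertex visited --- has the same defect: Lemma~\ref{lem2.8} requires the dead-end relation at the far end of the arm it is applied to, i.e.\ $c_i c_i^* = 0$, and this does not hold in $\Pi(A,0)$ unless $i = n$ (the actual relation at vertex $i$ is $c_i c_i^* = c_{i+1}^* c_{i+1}$, which leaves the sub-arm). The repair is simply to flip the direction, which is what the paper does: replace each subword $c_i c_i^*$ by $c_{i+1}^* c_{i+1}$ when $i < n$ and by $0$ when $i = n$; if no such subword exists, the cycle begins with some $c_l^*$ and ends with $c_l$, so a cyclic rotation (legitimate modulo commutators) produces one; and termination holds because each substitution fixes the length while raising the sum of the heights of the vertices visited by $2$, a quantity bounded by $n$ times the length. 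Since a positive-length closed walk on a line always has, after rotation, a local-minimum subword $c_i c_i^*$, the process can only halt by producing $0$.
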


\begin{proof}
The preprojective relation at vertex $i$ is
$c_{i} c_{i}^{*} - c_{i+1}^{*} c_{i+1} = 0$ for $i \neq n$ and at vertex
$n$ is $c_{n} c_{n}^{*} = 0$. Let $c$ be a positive length cycle. If
$c$ has a subpath of the form $c_{i} c_{i}^{*}$, for any $i$, then use
the preprojective relations to replace it with
$ c_{i+1}^{*} c_{i+1}$ if $i<n$ or 0 if $i=n$. If $c$ has no subpath of
this form then it necessarily begins with a dual arrow $c_{l}^{*}$ and
ends with $c_{l}$ for some $l$. Hence $c$ can be cyclically reordered to
begin $c_{l} c_{l}^{*}$. Repeating this process of pulling paths to the
right and reordering paths that cannot be pulled right eventually terminates
(as $c$ is finite length and $n$ is finite) with zero.
\end{proof}

\begin{cor}
\label{cor:HH0_An_R}
$\Pi_R(A)_{\text{cyc}} = R A_{0}$ for any commutative ring R. 
\end{cor}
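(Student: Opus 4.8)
The plan is to deduce Corollary~\ref{cor:HH0_An_R} from Lemma~\ref{lem:_vanishing_cycles} by a base-change argument, exploiting the fact that the proof of the lemma is entirely ``integral'' and combinatorial. First I would recall that $\Pi_R(A) = R\overline{A}/(r_{\mathrm{add}})$ and that $R\overline{A} \cong R \otimes_{\mathbb{Z}} \mathbb{Z}\overline{A}$, so that $\Pi_R(A) \cong R \otimes_{\mathbb{Z}} \Pi_{\mathbb{Z}}(A)$; this uses only that the relation $r_{\mathrm{add}}$ has integer coefficients. Since the formation of $A_{\mathrm{cyc}} = A/[A,A]$ is a cokernel of an $R$-linear map (the span of commutators is the image of the map $A \otimes_R A \to A$, $x \otimes y \mapsto xy - yx$), and cokernels commute with the right-exact functor $R \otimes_{\mathbb{Z}} -$, we obtain $\Pi_R(A)_{\mathrm{cyc}} \cong R \otimes_{\mathbb{Z}} \Pi_{\mathbb{Z}}(A)_{\mathrm{cyc}}$.

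Next I would invoke Lemma~\ref{lem:_vanishing_cycles} over $\mathbb{Z}$ (taking $R = \mathbb{Z}$ there), which gives $\Pi_{\mathbb{Z}}(A)_{\mathrm{cyc}} = \mathbb{Z}A_0$, a free $\mathbb{Z}$-module on the lazy paths. Tensoring up, $\Pi_R(A)_{\mathrm{cyc}} \cong R \otimes_{\mathbb{Z}} \mathbb{Z}A_0 = R A_0$, and one checks the natural map $RA_0 \to \Pi_R(A)_{\mathrm{cyc}}$ sending $e_i \mapsto [e_i]$ realizes this isomorphism, so that every positive-length path in $\Pi_R(A)$ lies in $[\Pi_R(A),\Pi_R(A)]$. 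Alternatively, and perhaps cleaner to state, one can simply observe that the proof of Lemma~\ref{lem:_vanishing_cycles} never divided by anything: the manipulations ``replace $c_i c_i^*$ by $c_{i+1}^* c_{i+1}$'' and ``cyclically reorder'' are valid verbatim over any commutative ring $R$, since the preprojective relations $c_i c_i^* = c_{i+1}^* c_{i+1}$ (for $i<n$) and $c_n c_n^* = 0$ hold in $\Pi_R(A)$ regardless of $R$. Thus the same termination argument shows every positive-length cycle is zero in $\Pi_R(A)_{\mathrm{cyc}}$, and since $\Pi_R(A)$ is spanned over $R$ by paths while $A_{\mathrm{cyc}}$ only sees cycles, $\Pi_R(A)_{\mathrm{cyc}} = RA_0$.

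There is essentially no obstacle here; the only point requiring a moment's care is confirming that $RA_0 \to \Pi_R(A)_{\mathrm{cyc}}$ is injective, i.e.\ that the lazy paths remain $R$-linearly independent modulo commutators. This follows because the augmentation $\Pi_R(A) \to RA_0$ killing all positive-length paths is a well-defined $R$-algebra map (the relations are homogeneous of degree $2$) which sends commutators to commutators in the commutative algebra $RA_0$, hence descends to a retraction $\Pi_R(A)_{\mathrm{cyc}} \to RA_0$ of the natural map. I would present this as a two-sentence remark rather than belabor it. The upshot is that Corollary~\ref{cor:HH0_An_R} is immediate once one notes the integrality of the argument for Lemma~\ref{lem:_vanishing_cycles}, and I would write the proof in the ``base change / the proof of the lemma works over $R$'' style to keep it to a few lines.
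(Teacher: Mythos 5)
Your proof is correct and follows essentially the same route as the paper: establish the vanishing of positive-length cycles from Lemma~\ref{lem:_vanishing_cycles} over $\mathbb{Z}$, then base change to $R$ (the paper's proof is literally the one-liner ``since the result holds over $\mathbb{Z}$, it holds over any commutative ring $R$''). The only point worth flagging is that Lemma~\ref{lem:_vanishing_cycles} is stated for the \emph{partial} algebra $\Pi(A,0)$ rather than $\Pi(A)$ --- the paper bridges this by noting the quotient map $\Pi(A,0)\to\Pi(A)$ is an isomorphism in degree zero --- but your direct re-run of the lemma's manipulations inside $\Pi_R(A)$, together with the augmentation retraction $\Pi_R(A)_{\text{cyc}}\to RA_0$, handles this equally well.
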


\begin{proof}
The quotient map $\Pi(A, 0) \rightarrow \Pi(A)$ is an isomorphism in degree zero and so by Lemma \ref{lem:_vanishing_cycles}, identifies $\Pi(A, 0)_{\text{cyc}} = \mathbb{Z}A_{0} = \Pi(A)_{\text{cyc}}$. Since the result holds over $\mathbb{Z}$, it holds over any commutative ring $R$.
\end{proof}

%l4.3 #&#
\begin{lem}
%%LEAP%%%\label{lem4.3}
\label{lem:_spanning_set_general}
$\Pi (Q, 0)_{\text{cyc}}$ has spanning set
$\mathbb{Z}Q_{0} \oplus e_{0} \Pi ^{\geq 4}(Q, 0) e_{0}$, where
$\Pi ^{\geq 4}(Q, 0)$ is the subspace of length at least 4 paths. Since
$\Pi (Q, 0)_{\text{cyc}} = \Lambda (Q, 0)_{\text{cyc}}$ it follows that this
is a spanning set for the quotient $\Lambda (Q)$.
\end{lem}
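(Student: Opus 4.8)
The plan is to show that, modulo commutators, $\Pi(Q,0)$ reduces to the $\mathbb{Z}$-span of the lazy paths $e_i$ (giving $\mathbb{Z}Q_0$) together with the cycles of length $\geq 4$ based at the central vertex $0$ (giving $e_0\Pi^{\geq 4}(Q,0)e_0$), by three successive elementary reductions.

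First I would reduce to closed paths: if $p=e_i p e_j$ with $i\neq j$, then $[e_i p,\,e_j]=(e_i p)e_j-e_j(e_i p)=p-0=p$, since $e_j(e_i p)=0$; hence every non-closed path already lies in $[\Pi(Q,0),\Pi(Q,0)]$, so $\Pi(Q,0)_{\text{cyc}}$ is spanned by images of closed paths, the length-$0$ ones being the $e_i$. Second, for a closed path $c=a_1\cdots a_n$ of positive length, $c-a_2\cdots a_n a_1=[a_1,\,a_2\cdots a_n]$ (both products make sense, the second because $c$ is closed), so $c$ is equivalent modulo commutators to each of its cyclic rotations.

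Third, I would case on whether $c$ meets the center. If $c$ uses no arrow incident to $0$ then, being a connected path, it lives in the doubled arm $\overline{A}$ of a single arm $A$; the preprojective relations at the non-central arm vertices are the same in $\Pi(A,0)$ and in $\Pi(Q,0)$ (those vertices have all their neighbors inside the arm), so the arm inclusion induces an algebra map $\Pi(A,0)\to\Pi(Q,0)$, and Lemma~\ref{lem:_vanishing_cycles} gives that $c$, lying in the positive-degree part of $\Pi(A,0)$, maps to $0$ in $\Pi(Q,0)_{\text{cyc}}$. Otherwise $c$ uses an arrow at $0$, so after a cyclic rotation it is a cycle based at $0$, i.e. an element of $e_0\Pi(Q,0)e_0$. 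Since $Q$ is a tree without loops or double arrows, an inspection of the arrows at $0$ (the arm arrows $\alpha$ pointing into $0$ and their reverses $\alpha^*$) shows there is no cycle at $0$ of length $1$ or $3$, and the only length-$2$ cycles at $0$ are $\alpha^*\alpha$, which lie in a single arm and hence vanish by the previous case. So the surviving positive-length cycles have length $\geq 4$, proving the stated spanning set.

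For the final sentence: $\Pi(Q,0)=\Lambda(Q,0)$ by Proposition~\ref{prop:_partial_equal}, and $\Lambda(Q)$ is a quotient of $\Lambda(Q,0)$; as $(-)_{\text{cyc}}$ sends surjections to surjections, the image of the spanning set spans $\Lambda(Q)_{\text{cyc}}$. The step I expect to be most delicate is the central-vertex analysis: ruling out the short cycles through $0$, and checking that the ``within one arm'' reduction invokes only relations that survive in $\Pi(Q,0)$, so that Lemma~\ref{lem:_vanishing_cycles} genuinely transports along the arm inclusion.
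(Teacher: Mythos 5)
Your proposal is correct and follows essentially the same route as the paper: reduce to cycles up to cyclic rotation, kill everything contained in a single arm via Lemma~\ref{lem:_vanishing_cycles}, and observe that what remains is based at $0$ and has length at least $4$. You are somewhat more explicit than the paper about the standard reductions (non-closed paths are commutators, cyclic rotations agree modulo commutators, and the arm inclusion genuinely induces a map on $(-)_{\text{cyc}}$), but these are elaborations of the same argument rather than a different one.
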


\begin{proof}
Let $c$ be a positive length cycle in $\Pi (Q, 0)$. If $c$ does not visit
$0$ twice then $c$ is contained in some arm $A \subset Q$ and hence, by
Lemma~\ref{lem:_vanishing_cycles},
$c = 0 \in \Pi (A, 0)_{\text{cyc}} \subset \Pi (Q, 0)_{\text{cyc}}$. Otherwise
$c$ has length at least 4 and visits $0$, so up to cyclic ordering we can
rewrite $c$ as a cycle starting and ending at $0$.
\end{proof}

\noindent
\underline{Notation:} For $Q = D_{n}, E_{m}$ for
$n \geq 4, m = 6, 7, 8$ we orient all arrows towards the central vertex
$3$, as in Fig.~\ref{fig:Dynkin_diagrams}. We label the three arrows
with target $3$ by $a, b, c$ where $a$ is in the shortest arm and
$c$ is in the longest arm. We write $a^{*}, b^{*}$, and $c^{*}$ respectively
for the dual arrows and use the Greek letters
$\alpha := a^{*} a$, $\beta := b^{*} b$, $\gamma := c^{*}c$ for the length
two cycles at vertex $3$. Note that in this language the multiplicative
preprojective relation at vertex $3$ is
$(1+ \alpha )^{-1}(1+\beta )^{-1}(1+\gamma )^{-1} = 1$.

%r4.4 #&#
\begin{rem}
%%LEAP%%%\label{rem4.4}
\label{rem:_linearly_indep}
For the purpose of computing $\Lambda (Q)_{\text{cyc}}$ we will begin with
the spanning set in Lemma~\ref{lem:_spanning_set_general} and whittle it
down to $\mathbb{Z}Q_{0}$. To demonstrate the torsion in $\Pi (Q)_{\text{cyc}}$ is nonzero,
we use \cite[Proposition 11.3.2 (iii)]{Schedler16}, which is not hard to
prove from what we have already established. Namely, if $Q$ is the star-shaped
quiver with three arms of length $p$, $q$, and $r$ then
%
%e4.1 #&#
\begin{equation}
\label{eq:_HH_0_description}
\Pi (Q)_{\text{cyc}} := \mathbb{Z}Q_{0} \oplus
\frac{A}{ [A, A] \oplus P}
\qquad\quad
\text{with } A := e_{3} \Pi (Q) e_{3} \cong
\frac{\mathbb{Z}\langle \alpha , \beta , \gamma \rangle }{(\alpha + \beta + \gamma , \alpha ^{p+1}, \beta ^{q+1}, \gamma ^{r+1})}
\end{equation}
where $P$ is the $\mathbb{Z}$-span of
$\{ \alpha ^{i}, \beta ^{i}, \gamma ^{i} \}_{i \in \mathbb{N}}$, i.e.,
the powers of length two cycles based at the central vertex.

Moreover, since $\Pi (Q)_{\text{cyc}}$ is graded, to show a cycle $c$ of
length $d$ is non-zero it suffices to consider the length $d$ piece of
$P$. Notice for $Q=D_{4}$ the length $4$ piece of $P$ is
$\mathbb{Z}\{ \gamma ^{2} \}$. Similarly for $Q=E_{6}$ the length
$6$ piece of $P$ is $\mathbb{Z}\{ \gamma ^{3} \}$ and for $Q=E_{8}$ the
length $10$ piece of $P$ is $\mathbb{Z}\{ \gamma ^{5} \}$. So the identifications
\begin{align*}
\gamma ^{2} &= (\alpha + \beta )^{2} = \alpha \beta + \beta \alpha = 2
\alpha \beta
\\
-\gamma ^{3} &= (\alpha + \beta )^{3} = \alpha \beta \alpha + \alpha
\beta ^{2} + \beta \alpha \beta + \beta ^{2} \alpha = 3 \beta \alpha
\beta
\\
-\gamma ^{5} &= (\alpha + \beta )^{5} = \alpha \beta \alpha \beta
\alpha + \alpha \beta \alpha \beta ^{2} + \alpha \beta ^{2} \alpha
\beta + \beta \alpha \beta \alpha \beta + \beta \alpha \beta ^{2}
\alpha + \beta ^{2} \alpha \beta ^{2} = 5 \beta \alpha \beta \alpha
\beta
\end{align*}
imply $2 \alpha \beta $, $3 \beta \alpha \beta $, and
$5 \beta \alpha \beta \alpha \beta = 0 \in \Pi (Q)_{\text{cyc}}$ for
$Q$ ADE Dynkin. Since these relations exhaust $P$,
$\alpha \beta \neq 0 \in \Pi (D_{4})_{\text{cyc}}$,
$\beta \alpha \beta \neq 0 \in \Pi (E_{6})_{\text{cyc}}$ and
$\beta \alpha \beta \alpha \beta \neq 0 \in \Pi (E_{8})_{\text{cyc}}$.
\end{rem}

%p4.5 #&#
\begin{prop}
%%LEAP%%%\label{prop4.5}
\label{prop:_HH_0_D_n}
$\Lambda (D_{n})_{\text{cyc}} = \mathbb{Z}(D_{n})_{0}$.
\end{prop}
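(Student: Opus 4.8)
\textbf{Proof proposal for Proposition~\ref{prop:_HH_0_D_n}.}

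The plan is to start from the spanning set $\mathbb{Z}(D_n)_0 \oplus e_3 \Pi^{\geq 4}(D_n, 0) e_3$ for $\Lambda(D_n)_{\text{cyc}}$ provided by Lemma~\ref{lem:_spanning_set_general} (using $\Pi(D_n,0)_{\text{cyc}} = \Lambda(D_n,0)_{\text{cyc}}$ from Proposition~\ref{prop:_partial_equal}, and the fact that $\Lambda(D_n)$ is a quotient of $\Lambda(D_n,0) = \Pi(D_n, 0)$), and show that every cycle of length $\geq 4$ based at the central vertex $3$ is a commutator in $\Lambda(D_n)$. First I would set up notation: writing $\alpha = a^*a$, $\beta = b^*b$ for the two short arms and $\delta = c_1^* c_1$ for the length-two cycle at the start of the long arm, the preprojective relations along the long arm (which hold in $\Lambda(D_n)$ since that relation is not at vertex $3$) let one pull powers of $\delta$ outward and, as in Lemma~\ref{lem:_vanishing_cycles}, show $\delta^{n-2} = 0$ and more generally that any cycle at vertex $3$ travelling into the long arm reduces to a polynomial in $\delta$ cyclically; meanwhile cycles that only visit the two short arms reduce to words in $\alpha, \beta$. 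So $e_3 \Pi^{\geq 4}(D_n,0)e_3$ modulo commutators is spanned by (cyclic words in) $\alpha, \beta, \delta$, and using $\alpha^2 = \beta^2 = 0$ (Lemma~\ref{lem2.8}, since $\alpha, \beta$ lie in arms of length $\leq 2$) these cyclic words collapse to $\{(\alpha\beta)^i, (\beta\alpha)^i\}$ together with the pure powers $\delta^i$.

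The key remaining point is to use the multiplicative relation at vertex $3$, namely $(1+\alpha)^{-1}(1+\beta)^{-1}(1+\delta)^{-1} = 1$ in $\Lambda(D_n)$ — equivalently, after rearranging and clearing the invertible factors, $(1+\gamma) = (1+\beta)^{-1}(1+\alpha)^{-1}$ where $\gamma = c^* c$ but here $\gamma$ is genuinely the full cycle $c_1^*\cdots$; since $D_n$'s long arm has $c$ as its first arrow I will simply work with $\delta = c^*c$ (the arrow $c$ with target $3$) and note $(1+\delta)^{-1} = (1+\beta)(1+\alpha)$, so $\delta = -\alpha - \beta - \beta\alpha$ plus nothing else because $\alpha^2 = \beta^2 = 0$ kills the higher terms — more precisely $(1+\beta)(1+\alpha) = 1 + \alpha + \beta + \beta\alpha$. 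Thus in $\Lambda(D_n)_{\text{cyc}}$ we get $\delta \equiv -\alpha - \beta - \beta\alpha \equiv -\beta\alpha$ (mod commutators, since $\alpha, \beta$ have length $2$... wait, they are not commutators, but their cyclic classes survive); I would instead track everything at the level of $P$-powers as in Remark~\ref{rem:_linearly_indep}: each $\delta^i$ becomes, after substituting and cyclically reducing, a $\mathbb{Z}$-multiple of $(\alpha\beta)^{\ldots}$, and the relation $[\Lambda(D_n),\Lambda(D_n)]$ already contains $\alpha\beta$ itself because the vertex-$3$ relation written additively-plus-higher-order contains the term that realizes $(\alpha\beta$-type cycles$)$ as $[a^*,a]+[b^*,b]+[c^*,c]+(\text{higher order})$, exactly the $D_4$ computation $[a^*,a]+[b^*,b]+[c^*,c]+ c^*cb^*b = 0$ generalized. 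The cleanest route: show directly that $\beta\alpha \in [\Lambda(D_n),\Lambda(D_n)]$ using the full multiplicative relation at vertex $3$ (rearranged so that $\beta\alpha$, resp. all the $(\alpha\beta)^i$, appear as an explicit sum of commutators plus terms supported in the long arm which vanish), and that each long-arm power $\delta^i$ likewise lies in the commutator subspace; then by induction on length every element of $e_3\Pi^{\geq 4}(D_n,0)e_3$ is a commutator, giving $\Lambda(D_n)_{\text{cyc}} = \mathbb{Z}(D_n)_0$.

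The main obstacle I expect is the bookkeeping in the second step: unlike the additive case, the multiplicative relation at vertex $3$ is an infinite-looking identity among the $(1+\alpha)^{\pm 1}$ etc., so one must carefully argue (via Lemma~\ref{lem2.8} nilpotence of $\alpha,\beta$ and Corollary~\ref{cor:_inverses} for $\delta$) that it truncates to a \emph{finite} polynomial identity, then extract from it that \emph{all} the surviving basis cycles $(\alpha\beta)^i$ and $\delta^i$ — not just the length-4 one — lie in $[\Lambda(D_n),\Lambda(D_n)]$. This is where the multiplicative structure does essential work: additively only $2\alpha\beta$ (and its analogues) die, leaving $p$-torsion, whereas multiplicatively the extra higher-order terms in $r_{\text{mult}}$ conspire to kill $\alpha\beta$ on the nose. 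I would organize this as a downward induction on path length, at each stage using the relation to trade the top cyclic word for strictly shorter ones plus an explicit commutator, and verify the base/inductive steps by the kind of $(1+\alpha)(1+\beta)(1+\delta)$-expansion shown above. Everything else — reduction to words in $\alpha,\beta,\delta$, and $\alpha^2=\beta^2=0$ — is routine given the lemmas already in hand.
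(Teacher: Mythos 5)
Your plan is essentially the paper's proof: reduce via Lemma~\ref{lem:_spanning_set_general} to cyclic words in $\alpha,\beta,\gamma$, use $\alpha^2=\beta^2=0$ to rewrite the central relation so that $\gamma$ is eliminated and only the alternating words $(\alpha\beta)^i$ survive, then kill these by induction on $i$ using $\gamma^i=0$ from Lemma~\ref{lem:_vanishing_cycles}. One small correction: your identity $\delta=-\alpha-\beta-\beta\alpha$ comes from pretending $(1+X)^{-1}=1-X$ for $X=\alpha+\beta+\beta\alpha$, which fails since $X^2\neq 0$; the right manipulation is $(1+\gamma)=(1+\alpha)^{-1}(1+\beta)^{-1}=(1-\alpha)(1-\beta)$, giving $\gamma=-\alpha-\beta+\alpha\beta$, after which your downward induction on length goes through exactly as in the paper.
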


\begin{proof}
By Lemma~\ref{lem:_spanning_set_general},
$\Lambda (D_{n})_{\text{cyc}}$ has a spanning set given by idempotents at
the vertices and words in $\alpha , \beta , \gamma $. Since
$\alpha ^{2} = 0 = \beta ^{2} \in \Lambda (D_{n})$, we can rewrite the
relation at the central vertex as
\begin{equation*}
(1+ \alpha )^{-1} (1+\beta )^{-1} (1+\gamma )^{-1} = 1 \quad
\implies \quad (1- \alpha ) (1-\beta ) = (1+\gamma ) \quad \implies
\quad \gamma = -\alpha -\beta + \alpha \beta .
\end{equation*}
Imposing this relation, we obtain a smaller spanning set consisting of
idempotents at the vertices and words in $\alpha $ and $\beta $. These
words necessarily alternate and hence modulo cyclic permutation, one has
a single length $4i$ generator $(\alpha \beta )^{i}$ for $i >0$.

Notice the path $\gamma ^{i}$ is contained in a single arm of
$D_{n}$ and hence is zero by Lemma~\ref{lem:_vanishing_cycles}. We conclude
that $0 = \gamma ^{i} = (-\alpha -\beta + \alpha \beta )^{i}$ for all
$i$. When $i=1$, $\alpha $ and $\beta $ also vanish by Lemma~\ref{lem:_vanishing_cycles}, so $\alpha \beta =0$. The result follows by
induction on $i$ since we can expand the leading term of the relation,
$(\alpha \beta )^{i}$, in terms of shorter-length terms, which are zero
by the inductive hypothesis.
\end{proof}

%c4.6 #&#
\begin{cor}
%%LEAP%%%\label{cor4.6}
\label{cor:_HH0_Dn}
$\Lambda _{R}(D_{n})_{\text{cyc}} = R(D_{n})_{0}$ for any commutative ring
$R$.
\end{cor}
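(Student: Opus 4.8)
The goal is to upgrade Proposition~\ref{prop:_HH_0_D_n}, which states $\Lambda(D_n)_{\text{cyc}} = \mathbb{Z}(D_n)_0$, to the corresponding statement over an arbitrary commutative ring $R$. The plan is to observe that the hard computational work was already done over $\mathbb{Z}$ in the proposition, and that passing to $R$ is a routine base-change argument exactly as in Corollary~\ref{cor:HH0_An_R}. More precisely, I would note that $\Lambda_R(D_n)_{\text{cyc}} = \Lambda_R(D_n)/[\Lambda_R(D_n), \Lambda_R(D_n)]$, and that the formation of this quotient commutes with base change: since $\Lambda_R(D_n) = \Lambda_{\mathbb{Z}}(D_n) \otimes_{\mathbb{Z}} R$ (the defining relations of the multiplicative preprojective algebra, and the localizations, are all defined integrally and are flat-stable in the relevant sense — in the star-shaped case no localization is needed at all by Proposition~\ref{prop:_partial_equal} and Corollary~\ref{cor:_inverses}), and since $- \otimes_{\mathbb{Z}} R$ is right exact, applying it to the exact sequence $[\Lambda_{\mathbb{Z}}(D_n),\Lambda_{\mathbb{Z}}(D_n)] \to \Lambda_{\mathbb{Z}}(D_n) \to \Lambda_{\mathbb{Z}}(D_n)_{\text{cyc}} \to 0$ yields $\Lambda_{\mathbb{Z}}(D_n)_{\text{cyc}} \otimes_{\mathbb{Z}} R = \Lambda_R(D_n)_{\text{cyc}}$. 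One must check that the image of $[\Lambda_{\mathbb{Z}}(D_n),\Lambda_{\mathbb{Z}}(D_n)] \otimes R$ in $\Lambda_R(D_n)$ is exactly $[\Lambda_R(D_n), \Lambda_R(D_n)]$, which is immediate since commutators of $R$-linear combinations of paths are $R$-linear combinations of commutators of paths.

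Concretely, the steps in order are: first, record that $\Lambda_R(D_n) \cong \Lambda_{\mathbb{Z}}(D_n) \otimes_{\mathbb{Z}} R$ as $R$-algebras, which follows from the integral definition of the relations together with the fact (Proposition~\ref{prop:_partial_equal}) that in the star-shaped Dynkin case the multiplicative preprojective algebra is a quotient of the path algebra $R\overline{D_n}$ with no localization; second, observe that the commutator subspace is compatible with base change, so $\Lambda_R(D_n)_{\text{cyc}} \cong \Lambda_{\mathbb{Z}}(D_n)_{\text{cyc}} \otimes_{\mathbb{Z}} R$; third, substitute the result of Proposition~\ref{prop:_HH_0_D_n}, namely $\Lambda_{\mathbb{Z}}(D_n)_{\text{cyc}} = \mathbb{Z}(D_n)_0$, to conclude $\Lambda_R(D_n)_{\text{cyc}} = \mathbb{Z}(D_n)_0 \otimes_{\mathbb{Z}} R = R(D_n)_0$.

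I do not expect any genuine obstacle here; this is a formal corollary. The only point requiring a sentence of care is the identification $\Lambda_R(D_n) \cong \Lambda_{\mathbb{Z}}(D_n) \otimes_{\mathbb{Z}} R$ — one should be slightly cautious because in general localization and quotient do not obviously commute with tensoring, but Proposition~\ref{prop:_partial_equal} sidesteps this entirely by showing the localization is superfluous in the Dynkin star-shaped case, reducing $\Lambda$ to an honest quotient $R\overline{D_n}/(r_{\text{mult}})$ of a free $R$-module, whose formation manifestly commutes with base change. Thus the corollary follows in two or three lines, mirroring the proof of Corollary~\ref{cor:HH0_An_R} almost verbatim.

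\begin{proof}
By Proposition~\ref{prop:_partial_equal} and its consequences, $\Lambda_R(D_n)$ is the quotient of the path algebra $R\overline{D_n}$ by the two-sided ideal generated by $r_{\text{mult}}$, with no localization required; since $r_{\text{mult}}$ is defined over $\mathbb{Z}$ (it has only finitely many terms, by Corollary~\ref{cor:_inverses}), we have $\Lambda_R(D_n) \cong \Lambda_{\mathbb{Z}}(D_n) \otimes_{\mathbb{Z}} R$ as $R$-algebras. The commutator subspace is compatible with base change: any commutator of $R$-linear combinations of paths is an $R$-linear combination of commutators of paths, so the image of $[\Lambda_{\mathbb{Z}}(D_n), \Lambda_{\mathbb{Z}}(D_n)] \otimes_{\mathbb{Z}} R$ in $\Lambda_R(D_n)$ equals $[\Lambda_R(D_n), \Lambda_R(D_n)]$. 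Applying the right exact functor $- \otimes_{\mathbb{Z}} R$ to the exact sequence
\begin{equation*}
[\Lambda_{\mathbb{Z}}(D_n), \Lambda_{\mathbb{Z}}(D_n)] \longrightarrow \Lambda_{\mathbb{Z}}(D_n) \longrightarrow \Lambda_{\mathbb{Z}}(D_n)_{\text{cyc}} \longrightarrow 0
\end{equation*}
therefore yields $\Lambda_R(D_n)_{\text{cyc}} \cong \Lambda_{\mathbb{Z}}(D_n)_{\text{cyc}} \otimes_{\mathbb{Z}} R$. By Proposition~\ref{prop:_HH_0_D_n}, $\Lambda_{\mathbb{Z}}(D_n)_{\text{cyc}} = \mathbb{Z}(D_n)_0$, so $\Lambda_R(D_n)_{\text{cyc}} = \mathbb{Z}(D_n)_0 \otimes_{\mathbb{Z}} R = R(D_n)_0$.
\end{proof}
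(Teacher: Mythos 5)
Your proof is correct and follows exactly the route the paper intends: the corollary is stated without proof, and the analogous Corollary~\ref{cor:HH0_An_R} is justified by the one-line remark that a result of this form over $\mathbb{Z}$ holds over any commutative ring $R$. You have simply spelled out the base-change argument (no localization needed by Proposition~\ref{prop:_partial_equal}, right exactness of $-\otimes_{\mathbb{Z}} R$, compatibility of the commutator subspace with base change), all of which is accurate.
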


%l4.7 #&#
\begin{lem}
%%LEAP%%%\label{lem4.7}
\label{lem:_length_4}
The length 4 cycle $\alpha \beta \in \Pi (Q)_{\text{cyc}}$ for
$Q= D_{n}, E_{6}, E_{7}, E_{8}$ is non-zero with
$2 \alpha \beta = 0$ and
$\alpha \beta = 0 \in \Lambda (Q)_{\text{cyc}}$.
\end{lem}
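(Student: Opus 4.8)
The strategy splits into three assertions: (1) $2\alpha\beta = 0 \in \Pi(Q)_{\text{cyc}}$; (2) $\alpha\beta \neq 0 \in \Pi(Q)_{\text{cyc}}$; (3) $\alpha\beta = 0 \in \Lambda(Q)_{\text{cyc}}$. For (1) and (2) I would reduce to the case $Q = D_4$ using the star-shaped description. Indeed, for any ADE Dynkin quiver with central vertex $3$, the arm containing $a$ has length $1$ and the arm containing $b$ has length $1$ (for $D_n$) or length $2$ (for $E_m$), so in all cases $\alpha^2 = a^*aa^*a = 0$ by Lemma~\ref{lem2.8}/Corollary~\ref{cor:_inverses}, and $\beta^2 = 0$ for $D_n$ while $\beta^3 = 0$ for $E_m$. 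Using the graded description of $\Pi(Q)_{\text{cyc}}$ from Remark~\ref{rem:_linearly_indep} (equation~\eqref{eq:_HH_0_description}), the degree-$4$ part of $P$ is spanned by $\gamma^2$ (since $\alpha^2 = 0$ always, and $\beta^2 = 0$ unless $Q$ has a long enough second arm — and even then $\beta^2$ still lies in $P$, but the point is only that $\gamma^2$ is there), and the identity $\gamma^2 = (\alpha+\beta)^2 = \alpha\beta + \beta\alpha = 2\alpha\beta$ holds in $e_3\Pi(Q)e_3$ because $\alpha^2 = \beta^2 = 0$ (I should double-check that $\beta^2 = 0$ in degree $2$ for $E_m$: the $b$-arm has length $\geq 2$ so $\beta^2 = b^*bb^*b$ need not vanish — but for $\gamma^2$ we only need $\gamma^2 = (-\alpha-\beta)^2 = \alpha^2 + \alpha\beta + \beta\alpha + \beta^2$, and $\alpha^2 = 0$ always; so the cleanest route is to note $\beta^2 \in P$ already, hence $\gamma^2 \equiv \alpha\beta + \beta\alpha \pmod{P}$ regardless). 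Then $2\alpha\beta = \gamma^2 \equiv 0$ modulo $P$ but the relations $[A,A]\oplus P$ exhaust nothing more in degree $4$, so $\alpha\beta$ itself is nonzero — this is exactly the computation carried out at the end of Remark~\ref{rem:_linearly_indep}, which I would simply cite.

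For assertion (3), that $\alpha\beta = 0 \in \Lambda(Q)_{\text{cyc}}$, the key is the multiplicative relation at the central vertex. For $Q = D_n$, Proposition~\ref{prop:_HH_0_D_n} already shows $\Lambda(D_n)_{\text{cyc}} = \mathbb{Z}(D_n)_0$, so every positive-length cycle — in particular $\alpha\beta$ — is zero; I would just invoke that. For $Q = E_m$, I need a direct argument. The multiplicative relation at vertex $3$ reads $(1+\alpha)^{-1}(1+\beta)^{-1}(1+\gamma)^{-1} = 1$, equivalently $(1+\gamma) = (1+\beta)^{-1}(1+\alpha)^{-1}$ up to reordering — but more useful is to expand to second order: modulo higher-order terms the relation gives $r_{\text{add}} = \alpha + \beta + \gamma + (\text{length} \geq 4)$ acting as $0$, and I want to extract that $\alpha\beta$ is a commutator. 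In $\Lambda(Q)_{\text{cyc}}$, note $e_3 r_{\text{mult}} = 0$ can be rearranged, using $(1+x)^{-1} = 1 - x + x^2 - \cdots$ and the nilpotence of $\alpha, \beta, \gamma$ (Corollary~\ref{cor:_inverses}), into an expression of the form $\alpha + \beta + \gamma = (\text{explicit bilinear and higher terms in }\alpha,\beta,\gamma)$. Since $\alpha, \beta, \gamma$ each lie in a single arm, each is zero in $\Pi(Q,3)_{\text{cyc}} = \Lambda(Q,3)_{\text{cyc}}$ by Lemma~\ref{lem:_vanishing_cycles}; so in $\Lambda(Q)_{\text{cyc}}$ the degree-$4$ consequence of the relation is $\alpha\beta + \beta\alpha + (\text{terms that are cyclically }\alpha^2, \beta^2, \gamma^2\text{ or involve }\gamma) = 0$, and in $\Lambda(Q)_{\text{cyc}}$ we have $\alpha\beta = \beta\alpha$ (cyclic), $\alpha^2$ lies in an arm hence vanishes, $\gamma^2$ lies in an arm hence vanishes — leaving $2\alpha\beta + (\text{possibly }\beta^2) = 0$, which is not yet $\alpha\beta = 0$. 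The cleaner approach: observe directly from the relation that $\gamma \equiv -\alpha - \beta \pmod{\text{length} \geq 4}$, and more precisely (expanding the product) $\gamma = -\alpha - \beta + (\alpha\beta + \text{something symmetric}) + \cdots$; since $\gamma^i = 0$ in $\Lambda(E_m)_{\text{cyc}}$ (arm), substituting gives an inductive collapse of all $\alpha,\beta$-words exactly as in the proof of Proposition~\ref{prop:_HH_0_D_n}. The obstacle in the $E_m$ case is that $\beta^2 \neq 0$, so the reduction is messier than in $D_n$; but the grading still saves us — in each fixed degree there are only finitely many words and the relation lets us trade the lexicographically largest one for shorter words.

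The main obstacle I anticipate is assertion (3) for type $E$: the argument of Proposition~\ref{prop:_HH_0_D_n} uses $\beta^2 = 0$ crucially to collapse words in $\alpha,\beta$ to powers of $\alpha\beta$, and this fails for $E_m$. However, for the specific length-$4$ claim we are rescued by the fact that we only need vanishing in degree $4$, where the possible monomials are $\alpha^2, \alpha\beta, \beta\alpha, \beta^2, \gamma^2$ and various mixed terms — and $\alpha^2 = 0$, $\gamma^2 = 0$ (arm arguments), $\alpha\beta \sim \beta\alpha$ (cyclic), so the degree-$4$ relation reads $c_1 \alpha\beta + c_2 \beta^2 = 0$ for explicit constants, and I would separately argue $\beta^2 = 0 \in \Lambda(E_m)_{\text{cyc}}$ — which it need not be in degree $4$ of $\Pi$, but perhaps is in $\Lambda$. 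Actually the safest path: since $\beta^2$ is a cycle lying entirely within the $b$-arm $A_{n_2}$, and $\Pi(A_{n_2}, 3)_{\text{cyc}} = \mathbb{Z}A_0$ by Lemma~\ref{lem:_vanishing_cycles}, we get $\beta^2 = 0 \in \Lambda(E_m)_{\text{cyc}}$ too. Hence the degree-$4$ relation becomes $2\alpha\beta = 0$ — still not enough over $\mathbb{Z}$! The resolution must be that the multiplicative relation in degree $4$ actually contributes $\alpha\beta$ with coefficient $1$ (not $2$), because the $\gamma^2$ and cross terms from expanding $(1+\alpha)(1+\beta)(1+\gamma)^{-1}$-type products are asymmetric in the chosen ordering $a^* < b^* < c^*$. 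I would compute this degree-$4$ piece of $r_{\text{mult}}$ explicitly — it is a short calculation — and read off that $\alpha\beta \in [\Lambda(Q),\Lambda(Q)]$ directly. That explicit degree-$4$ expansion is the one place I cannot avoid a brief computation.
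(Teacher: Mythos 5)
Your final plan is correct and is essentially the paper's proof: the $\Pi(Q)_{\text{cyc}}$ assertions are cited from Remark~\ref{rem:_linearly_indep}, and for $\Lambda(Q)_{\text{cyc}}$ the paper rewrites the central relation as $(1+\gamma)^{-1}=(1+\beta)(1+\alpha)=1+\alpha+\beta+\beta\alpha$, so that every term except $\beta\alpha$ lies in a single arm and vanishes by Lemma~\ref{lem:_vanishing_cycles} --- exactly the ``coefficient $1$, not $2$, by asymmetry of the ordering'' resolution you arrive at in your last paragraph. The only difference is that your write-up detours through several discarded attempts before landing there; the short explicit expansion you defer is precisely the one the paper carries out.
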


\begin{proof}
The statement in $\Pi (Q)_{\text{cyc}}$ holds by Remark~\ref{rem:_linearly_indep}. In $\Lambda (Q)$, we can rewrite the central
relation
%
%e4.2 #&#
\begin{align}
\label{eq:_leading_term_ba}
(1+\alpha )^{-1}(1+\beta )^{-1}(1+\gamma )^{-1} = 1 \quad &\implies
\quad 1 - \gamma + \gamma ^{2} - \cdots \pm \gamma ^{ \max \{5, n-3
\}} = 1 + \alpha + \beta + \beta \alpha
\nonumber
\\
& \implies \quad \gamma + \gamma ^{2} - \cdots \pm \gamma ^{ \max \{5,
n-3 \}} = \alpha + \beta + \beta \alpha .
\end{align}
Every term except $\beta \alpha $ is contained in one arm of $Q$ and hence
is zero in $\Lambda (Q)_{\text{cyc}}$ by Lemma~\ref{lem:_vanishing_cycles}. So $\alpha \beta = \beta \alpha =0$, as desired.
\end{proof}

For a ring $R$, write $R^{\times}$ for the set of invertible elements in $R$.
%c4.8 #&#
\begin{cor}
%%LEAP%%%\label{cor4.8}
\label{cor:_no_iso_Dn}
Let $R$ be a commutative ring. If $2 \notin R^{\times}$ then
$\Lambda _{R}(D_{n}) \ncong \Pi _{R}(D_{n})$.
\end{cor}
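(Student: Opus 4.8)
The plan is to combine the zeroth Hochschild homology computations already established with the rigidification from Section~\ref{sec:_correct_iso}. The key observation is that an isomorphism of algebras induces an isomorphism on $\text{HH}_0$, so it suffices to show $\text{HH}_0(\Lambda_R(D_n)) \ncong \text{HH}_0(\Pi_R(D_n))$ when $2 \notin R^\times$. In fact, by Corollary~\ref{cor:_correct_iso} we can do better and reduce to checking that a single distinguished cycle behaves differently in the two algebras.

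First I would reduce to the case $R = k$ a field of characteristic $2$: if $2 \notin R^\times$, pick a maximal ideal $\mathfrak m \supset (2)$ and set $k := R/\mathfrak m$; any isomorphism $\Lambda_R(D_n) \to \Pi_R(D_n)$ descends modulo $\mathfrak m$ to a $k$-algebra isomorphism $\Lambda_k(D_n) \to \Pi_k(D_n)$, so it suffices to rule the latter out. Next, by Corollary~\ref{cor:_correct_iso}, any such isomorphism can be corrected to a $kQ_0$-module isomorphism $\varphi$ sending each arrow to itself plus higher order terms; by Remark~\ref{rem:_normal_form}, since $\alpha\beta = a^*ab^*b$ is in normal form in both $\Lambda_k(D_n)$ and $\Pi_k(D_n)$ (with respect to the stated lexicographical ordering), $\varphi$ sends $\alpha\beta$ to $\alpha\beta$ plus strictly longer paths. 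Hence on the associated graded (using that $\Pi_k(D_n)_{\text{cyc}}$ is graded by path length) the induced map $\varphi_{\text{cyc}}: \Lambda_k(D_n)_{\text{cyc}} \to \Pi_k(D_n)_{\text{cyc}}$ sends $[\alpha\beta]$ to $[\alpha\beta]$.

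Now I would derive the contradiction: by Lemma~\ref{lem:_length_4}, $\alpha\beta = 0$ in $\Lambda_k(D_n)_{\text{cyc}}$, whereas the same lemma shows $\alpha\beta \neq 0$ in $\Pi_k(D_n)_{\text{cyc}}$ when $2 = 0$ in $k$ (there $2\alpha\beta = 0$ but $\alpha\beta$ itself is nonzero). So $\varphi_{\text{cyc}}$ would send $0$ to a nonzero element, which is absurd for an isomorphism. Therefore no isomorphism $\Lambda_k(D_n) \to \Pi_k(D_n)$ exists, and consequently none over $R$.

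The only subtle point — and the step I'd expect to need the most care — is the passage to the associated graded: $\Lambda_k(D_n)$ is not itself graded, only filtered, so one must argue that a corrected isomorphism, which a priori only preserves the filtration, induces a well-defined map on $\text{HH}_0$ compatible with the length grading on the target $\Pi_k(D_n)_{\text{cyc}}$ in the sense that the lowest-degree part of $\varphi(\alpha\beta)$ is exactly $\alpha\beta$. This is exactly what Remark~\ref{rem:_normal_form} and the normal-form hypothesis guarantee, so the argument is clean, but it is worth spelling out that "$\alpha\beta = 0$ in $\Lambda_{\text{cyc}}$" combined with "$\varphi(\alpha\beta) = \alpha\beta + (\text{higher order})$" forces $\alpha\beta + (\text{higher order}) \in [\Pi_k(D_n), \Pi_k(D_n)]$, and then extracting the degree-$4$ component contradicts $\alpha\beta \neq 0 \in \Pi_k(D_n)_{\text{cyc}}$. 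Alternatively, one can bypass the single-class argument entirely and simply cite Corollary~\ref{cor:_HH0_Dn} (giving $\Lambda_R(D_n)_{\text{cyc}} = R(D_n)_0$) against Schedler's computation that $\Pi_R(D_n)_{\text{cyc}}$ has nonzero $2$-torsion, but the refined version is preferable since it needs less input.
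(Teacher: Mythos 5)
Your argument is correct, but you have inverted the paper's emphasis: the paper's official proof of this corollary is exactly your ``alternative'' at the end --- it cites Corollary~\ref{cor:_HH0_Dn} to get $\Lambda_R(D_n)_{\text{cyc}} = R(D_n)_0$ and Lemma~\ref{lem:_length_4} to get $\alpha\beta \neq 0 \in \Pi_R(D_n)_{\text{cyc}}$, and concludes in two lines without ever reducing to a field or invoking Section~\ref{sec:_correct_iso}. The refined single-class argument you lead with (reduce mod a maximal ideal containing $2$, correct the isomorphism via Corollary~\ref{cor:_correct_iso} and Remark~\ref{rem:_normal_form}, then extract the degree-$4$ component of $\varphi(\alpha\beta) \in [\Pi_k(D_n),\Pi_k(D_n)]$ using the grading) is precisely what the paper records in the remark immediately \emph{following} its proof, flagged as the sharper version that avoids computing all of $\Lambda(D_n)_{\text{cyc}}$. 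So both routes appear in the paper; the trade-off is that the direct $\text{HH}_0$ comparison is shorter given that Proposition~\ref{prop:_HH_0_D_n} has already been proved, while your route needs only Lemma~\ref{lem:_length_4} plus the rigidification machinery and so requires strictly less of Section~\ref{sec:_HH_0}. Your handling of the one delicate point --- that $[\Pi_k(D_n),\Pi_k(D_n)]$ is a graded subspace, so membership of $\alpha\beta + (\text{higher order})$ forces membership of its degree-$4$ part $\alpha\beta$ --- is exactly what the paper means by ``utilizing the grading in $\Pi_k(Q)$,'' and is stated correctly.
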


\begin{proof}
By Corollary~\ref{cor:_HH0_Dn},
$\Lambda _{R}(D_{n})_{\text{cyc}} = R(D_{n})_{0}$ and if
$2 \notin R^{\times}$, then by Lemma~\ref{lem:_length_4},
$\alpha \beta \neq 0 \in \Pi _{R}(D_{n})_{\text{cyc}}$. We conclude that
$\Lambda _{R}(D_{n})_{\text{cyc}} \ncong \Pi _{R}(D_{n})_{
\text{cyc}}$ and hence
$\Lambda _{R}(D_{n}) \ncong \Pi _{R}(D_{n})$.
\end{proof}

Note that after taking the quotient by a maximal ideal containing
$2$ in $R$ the argument is sharper. Any corrected isomorphism sends
$\alpha \beta $ to itself plus higher order terms and
$\alpha \beta = 0 \in \Lambda _{k}(D_{n})_{\text{cyc}}$ while
$\alpha \beta \neq 0 \in \Pi _{k}(D_{n})_{\text{cyc}}$. So one need not
compute the remaining zeroth Hochschild homology.

%r4.9 #&#
\begin{rem}
%%LEAP%%%\label{rem4.9}
\label{rem:_Pi(Dn)_cyc}
The first paragraph of the proof of Proposition~\ref{prop:_HH_0_D_n} implies
$\Pi (D_{n})_{\text{cyc}}$ is a quotient of $\mathbb{Z}$-span of
$\{ (\alpha \beta )^{i} \}_{i=1, \dots , \left \lfloor{ n/2-1}\right
\rfloor }$. The identity
$ 0 = \gamma ^{i} = (\alpha \beta )^{i} + (\beta \alpha )^{i} = 2(
\alpha \beta )^{i} \in \Pi (D_{n})_{\text{cyc}} $ implies these classes
are all 2-torsion. Each class is non-zero by
\cite[Theorem 13.1.1 (2)]{Schedler16}, see Remark~\ref{rem:_linearly_indep}, so
\begin{equation*}
\Pi (D_{n})_{\text{cyc}} \cong \mathbb{Z}(D_{n})_{0} \oplus \mathbb{Z}/
2 \mathbb{Z}^{>0}[ x]/ ( x^{ \lfloor n/2 \rfloor}),
\end{equation*}
as graded $\mathbb{Z}$-modules with $|x| = 4$.
\end{rem}

%l4.10 #&#
\begin{lem}
%%LEAP%%%\label{lem4.10}
\label{lem:_length_6}
The length 6 cycle $\beta \alpha \beta \in \Pi (Q)_{\text{cyc}}$ for
$Q$ type E is non-zero with $3 \beta \alpha \beta = 0$ and
$\beta \alpha \beta = 0 \in \Lambda (Q)_{\text{cyc}}$.
\end{lem}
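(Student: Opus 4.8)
The plan is to mirror the structure of the proofs of Proposition~\ref{prop:_HH_0_D_n} and Lemma~\ref{lem:_length_4}, adapting them to the type E situation where $\alpha^2$ and $\beta^2$ need no longer vanish. The non-vanishing of $\beta\alpha\beta$ with $3\beta\alpha\beta = 0$ in $\Pi(Q)_{\text{cyc}}$ is already handled by Remark~\ref{rem:_linearly_indep}: for $Q = E_6$ the length-6 piece of $P$ is $\mathbb{Z}\{\gamma^3\}$, the identity $-\gamma^3 = (\alpha+\beta)^3 = 3\beta\alpha\beta$ (modulo cyclic permutations and modulo $[A,A]$) exhausts $P$ in this degree, and \cite[Theorem 13.1.1]{Schedler16} guarantees the class is genuinely $3$-torsion, not zero; for $E_7, E_8$ the same computation applies since the relevant powers $\alpha^{p+1}$, $\beta^{q+1}$ don't yet intervene in degree $6$. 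So the real content is the vanishing $\beta\alpha\beta = 0 \in \Lambda(Q)_{\text{cyc}}$.

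For that, I would work in $\Lambda(Q, 3) = \Pi(Q,3)$ and use the spanning set from Lemma~\ref{lem:_spanning_set_general}: modulo commutators, $\Lambda(Q)_{\text{cyc}}$ is spanned by $\mathbb{Z}Q_0$ together with cycles at the central vertex of length $\geq 4$, i.e., words in $\alpha, \beta, \gamma$. The key input is Lemma~\ref{lem:_vanishing_cycles}: any word lying entirely in a single arm — in particular any power $\gamma^i$, $\alpha^i$, $\beta^i$ — is zero in $\Lambda(Q)_{\text{cyc}}$. Now expand the multiplicative relation at vertex $3$, namely $(1+\alpha)^{-1}(1+\beta)^{-1}(1+\gamma)^{-1} = 1$, to isolate $\gamma$ and its powers against words in $\alpha$ and $\beta$ alone; as in \eqref{eq:_leading_term_ba}, every term involving $\gamma$ is an arm cycle and hence dies in $\Lambda(Q)_{\text{cyc}}$, so modulo commutators and modulo arm cycles one gets a relation among words in $\alpha, \beta$. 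Using $\alpha = \beta = 0$ and $\alpha\beta = 0$ in $\Lambda(Q)_{\text{cyc}}$ (the latter from Lemma~\ref{lem:_length_4}), I would then extract the degree-6 component: the expansion of $\gamma^i = (-\alpha-\beta+\cdots)^i$ type relations, reduced modulo the already-established degree-$\leq 4$ vanishing, should force the only new degree-6 cyclic word $\beta\alpha\beta$ (equivalently $\alpha\beta^2$, $\alpha\beta\alpha$, etc., which coincide up to cyclic rotation and commutators) to be zero. Concretely, computing $\gamma^2$ and $\gamma^3$ from $\gamma = -\alpha-\beta+\alpha\beta - (\text{longer})$ and using that $\gamma^2, \gamma^3$ are arm cycles gives a cascade of relations whose leading degree-6 term is a nonzero multiple of $\beta\alpha\beta$ plus lower-order (hence vanishing) terms — wait, one must be careful that the coefficient is $\pm 1$ and not something like $3$, since over $\mathbb{Z}$ a relation $3\beta\alpha\beta = 0$ would not suffice. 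This is where I expect the main obstacle: one needs to show the relation obtained is $\beta\alpha\beta = (\text{arm cycles and shorter}) = 0$ on the nose, which likely requires carefully tracking the length-$\leq 6$ truncation of $(1+\alpha)^{-1}(1+\beta)^{-1}$ and of the powers of $\gamma$, rather than just their leading terms. An induction on length, as in Proposition~\ref{prop:_HH_0_D_n}, organizes this: assuming all cyclic words of length $< 6$ in $\alpha,\beta$ vanish in $\Lambda(Q)_{\text{cyc}}$, the degree-6 part of the relation $0 = \gamma^i$ (for the appropriate $i$) expresses $\beta\alpha\beta$ as an integer combination of such shorter words, hence zero.

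The remaining subtlety is the dependence on the specific quiver. For $E_6$ the longest arm has length $3$ so $\gamma^4 = 0$ is forced by the arm relation and the expansion of $\gamma$ terminates quickly; for $E_7$ and $E_8$ the arms are longer, so more terms of $r_{\text{mult}}$ survive, but since we only care about the degree-$6$ truncation and everything of higher degree is irrelevant to extracting $\beta\alpha\beta$, the same argument goes through uniformly — one just truncates all power series at length $6$. I would phrase the proof to emphasize that only the length-$\leq 6$ part of the relation matters, cite Lemma~\ref{lem:_vanishing_cycles} for the arm-cycle vanishing and Lemma~\ref{lem:_length_4} for $\alpha\beta = 0$, and conclude $\beta\alpha\beta = 0 \in \Lambda(Q)_{\text{cyc}}$ by the length induction.
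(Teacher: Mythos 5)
Your treatment of the statement in $\Pi(Q)_{\text{cyc}}$ is correct and is exactly the paper's (Remark~\ref{rem:_linearly_indep}), and your general strategy for the vanishing in $\Lambda(Q)_{\text{cyc}}$ — rewrite the central relation, kill everything supported on a single arm via Lemma~\ref{lem:_vanishing_cycles}, kill $\alpha\beta$ via Lemma~\ref{lem:_length_4} — is also the right one. But there is a genuine gap at the step you yourself flag: you never actually produce a relation in which $\beta\alpha\beta$ occurs with coefficient $\pm 1$. Your proposed source of such a relation, the identities $0=[\gamma^i]$ together with an induction on length, cannot supply it: the length-$6$ contribution of $\gamma^i$ for $i\geq 2$ only ever yields a non-unit multiple of $\beta\alpha\beta$ modulo cyclic equivalence (e.g.\ $\gamma^3$ gives $-3\beta\alpha\beta$, which is precisely the relation already present in $\Pi(Q)_{\text{cyc}}$ and proves nothing new; $\gamma^2$ gives $-4\beta\alpha\beta$ plus known terms). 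So the induction as described does not close, and "expresses $\beta\alpha\beta$ as an integer combination of shorter words" is not something the powers of $\gamma$ deliver.

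The missing idea is to use the relation for $\gamma$ itself, exploiting that in type E the $a$- and $b$-arms have lengths $1$ and $2$, so $\alpha^2=0$ and $\beta^3=0$ and the inverses $(1+\alpha)^{-1}=1-\alpha$ and $(1+\beta)^{-1}=1-\beta+\beta^2$ are exact polynomials. The central relation then reads
\begin{equation*}
(1+\gamma)=(1-\alpha)(1-\beta+\beta^2)
\qquad\Longrightarrow\qquad
\gamma=-\alpha-\beta+\alpha\beta+\beta^2-\alpha\beta^2 ,
\end{equation*}
an exact identity in $\Lambda(Q)$ with no truncation or grading argument needed. Here $\gamma$, $\alpha$, $\beta$, $\beta^2$ are arm cycles and hence vanish in $\Lambda(Q)_{\text{cyc}}$, and $\alpha\beta$ vanishes by Lemma~\ref{lem:_length_4}; therefore $\alpha\beta^2=0$ in $\Lambda(Q)_{\text{cyc}}$, and $\alpha\beta^2$ is a cyclic rotation of $\beta\alpha\beta$. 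This single identity, with its unit coefficient on $\alpha\beta^2$, is what your proposal is missing.
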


\begin{proof}
The statement in $\Pi (Q)_{\text{cyc}}$ holds by Remark~\ref{rem:_linearly_indep}. In $\Lambda (Q)$, the relation
%
%e4.3 #&#
\begin{align}
\label{eq:_leading_term_bab}
(1+\alpha )^{-1}(1+\beta )^{-1}(1+\gamma )^{-1} = 1 \quad &\implies
\quad (1-\alpha )(1-\beta +\beta ^{2}) = (1+ \gamma )
\nonumber
\\
& \implies \quad \gamma = -\alpha - \beta + \alpha \beta +\beta ^{2} -
\alpha \beta ^{2},
\end{align}
has terms $\gamma , \alpha , \beta $, and $\beta ^{2}$ entirely contained
in one arm and hence zero in $\Lambda (Q)_{\text{cyc}}$. The
$\alpha \beta $ term is zero by Lemma~\ref{lem:_length_4}. We conclude
that
$\beta \alpha \beta = \alpha \beta ^{2} = 0 \in \Lambda (Q)_{
\text{cyc}}$.
\end{proof}

%l4.11 #&#
\begin{lem}
%%LEAP%%%\label{lem4.11}
\label{lem:_length_8}
Let $2 \leq d \leq 8$. Any length $d$ cycle in $\alpha $ and
$\beta $ is zero in $\Lambda (E_{8})_{\text{cyc}}$.
\end{lem}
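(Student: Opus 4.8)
The plan is to mimic the strategy of Lemmas~\ref{lem:_length_4} and \ref{lem:_length_6}: extract from the multiplicative relation at vertex $3$ a formula expressing $\gamma$ (equivalently, each $\gamma^i$) in terms of $\alpha,\beta$, then observe that in $\Lambda(E_8)_{\text{cyc}}$ every term lying in a single arm dies by Lemma~\ref{lem:_vanishing_cycles}, while $\gamma^i=0$ for all $i$ since $\gamma$ is contained in the length-5 arm. This turns the truncated relation $(1+\alpha)^{-1}(1+\beta)^{-1}(1+\gamma)^{-1}=1$ into a sequence of polynomial identities, purely in the free algebra on $\alpha,\beta$ modulo $\alpha^2$-- well, \emph{not} modulo $\alpha^2$ this time: in $E_8$ the short arm has length $1$ (arrow $a$ from vertex $4$ to vertex $3$), so $\alpha = a^*a$ satisfies $\alpha^2 = 0$ by Lemma~\ref{lem:_vanishing_cycles}, while $\beta$ (in the length-$2$ arm $1\to 2\to 3$) satisfies $\beta^3 = 0$, and $\gamma^5 = 0$. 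So the working ring is $\mathbb{Z}\langle \alpha,\beta\rangle/(\alpha^2,\beta^3)$, and we want to show every cycle word of length $d$ with $2\le d\le 8$ vanishes in $\Lambda(E_8)_{\text{cyc}}$.

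Concretely, I would proceed by induction on $d$, at each stage using the degree-$d$ component of $(1+\gamma) = (1-\alpha)(1+\beta)^{-1}\big|_{\text{truncated}}$ — more precisely of $(1+\gamma)^{-1} = (1+\alpha)(1+\beta)(1+\gamma^{\text{higher}})\cdots$, but the cleanest form is the one already derived in \eqref{eq:_leading_term_bab} and its analogues: solve for $\gamma$ as a polynomial in $\alpha,\beta$, then substitute into $\gamma^i = 0$. Since $\gamma = -\alpha-\beta+\alpha\beta+\beta^2-\alpha\beta^2$ (using $\alpha^2=0$, expanding $(1+\beta)^{-1}=1-\beta+\beta^2$ as $\beta^3=0$), the relation $\gamma=0\in\Lambda(E_8)_{\text{cyc}}$ gives, after discarding the single-arm terms $\alpha,\beta,\beta^2$, the identity $\alpha\beta - \alpha\beta^2 \equiv 0$, and $\gamma^2 = 0$, $\gamma^3=0$, $\gamma^4=0$ give further relations in higher length. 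Cyclically reducing, the cycle words in $\alpha,\beta$ of a given length form a finite list (words must alternate runs of $\alpha$'s of length $\le 1$ with runs of $\beta$'s of length $\le 2$, up to cyclic rotation), and one checks degree by degree — $d=2$: $\alpha\beta$, done by Lemma~\ref{lem:_length_4}; $d=3$: $\alpha\beta^2$ and $\beta\alpha\beta$, done by Lemma~\ref{lem:_length_6}; then $d=4,5,6,7,8$ — that the leading (longest) cyclic word appearing in $\gamma^i=0$ is expressible via strictly shorter cyclic words, which vanish by the inductive hypothesis, plus single-arm terms which vanish by Lemma~\ref{lem:_vanishing_cycles}.

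The main obstacle is bookkeeping: for $d$ up to $8$ one must be careful that (i) the truncation of the geometric series for $(1+\gamma)^{-1}$ is taken to high enough order (it suffices to keep terms of length $\le 8$, and $\gamma^5=0$ anyway), and (ii) the cyclic reduction genuinely isolates a unique longest word with invertible (here, $\pm 1$) coefficient at each length, so that the induction closes — in contrast to $\Pi(E_8)_{\text{cyc}}$, where the corresponding coefficient at length $10$ is $5$ and the argument fails, which is exactly the content of Lemma~\ref{lem:_length_10}. I expect the verification to be a short but attention-demanding hand computation; alternatively one can note that $\Lambda(E_8)$ surjects onto $\Lambda$ of the sub-star-shaped quiver with arms $(1,2,\infty)$, i.e.\ type $D$, where all such words already vanish by Proposition~\ref{prop:_HH_0_D_n} — but the arm-length-$5$ constraint $\gamma^5=0$ must still be invoked to handle words of length $\ge$ the relevant threshold, so I would present the direct induction.
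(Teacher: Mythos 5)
Your overall plan --- turn the central relation into identities among words in $\alpha,\beta$ and kill single-arm terms --- is the right family of ideas, but the specific inductive mechanism you propose breaks down exactly at the one nontrivial case of this lemma, the path-length-$8$ class $\beta\alpha\beta\alpha$. (Note first that ``length'' here is path length in arrows, so $2\le d\le 8$ means words of at most four Greek letters; odd $d$ is vacuous, and up to cyclic rotation and $\alpha^2=0$, $\beta^3=0$ the only surviving length-$8$ word is $\alpha\beta\alpha\beta\sim\beta\alpha\beta\alpha$.) The problem is twofold. First, $\Lambda(E_8)_{\text{cyc}}$ is \emph{not} graded --- the relation $\gamma=-\alpha-\beta+\alpha\beta+\beta^2-\alpha\beta^2$ is inhomogeneous --- so you cannot extract ``the degree-$d$ component'' of $[\gamma^i]=0$; you only get the whole identity. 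Second, your scheme of isolating the \emph{longest} cyclic word of $[\gamma^i]=0$ never produces $\alpha\beta\alpha\beta$: the longest term of $\gamma$ is $-\alpha\beta^2$, so the longest term of $\gamma^2$ is $\alpha\beta^2\alpha\beta^2$ (path length $12$), and the identity $[\gamma^2]=0$ reads, after discarding terms already known to vanish,
\begin{equation*}
[\alpha\beta\alpha\beta] \;-\; 2[\beta\alpha\beta\alpha\beta] \;+\; [\alpha\beta^2\alpha\beta^2] \;=\; 0 ,
\end{equation*}
which entangles the target with the two \emph{longer} classes, one of which ($\beta\alpha\beta\alpha\beta$) is precisely the delicate length-$10$ class of Lemma~\ref{lem:_length_10} requiring the commutator trick. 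So the induction is not on a well-founded order and does not close.

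The paper's fix, which you are missing, is to power up the \emph{other} side of the relation: from $(1+\gamma)^{-1}=(1+\beta)(1+\alpha)$ one gets equation~\eqref{eq:_leading_term_ba}, whose non-$\gamma$ side is exactly $\alpha+\beta+\beta\alpha$ with \emph{unique} top term $\beta\alpha$. Squaring this identity puts all $\gamma$-powers on one side (each a single-arm cycle, hence zero in $\text{cyc}$) and produces on the other side $\beta\alpha\beta\alpha$ plus only length-$4$ and length-$6$ terms, which vanish by Lemmas~\ref{lem:_length_4} and~\ref{lem:_length_6}; the remaining length-$8$ words die by cyclic rotation into $\alpha^2$ or $\beta^3$. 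Finally, your closing aside is also off: the star quiver with arms $(1,2,N)$ is not type $D$ (type $D_n$ has arms $(1,1,n-3)$, so $\beta^2=0$ there, which fails in $E_8$), and the natural map goes $\Lambda(Q_{1,2,N})\twoheadrightarrow\Lambda(E_8)$, not the reverse, so no vanishing can be imported that way.
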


\begin{proof}
By Lemma~\ref{lem:_spanning_set_general},
$\alpha = 0 = \beta \in \Lambda (E_{8})_{\text{cyc}}$. In length 4 and 6
the result follows from Lemmas~\ref{lem:_length_4} and \ref{lem:_length_6} respectively. In length 8, square both sides of Equation
(\ref{eq:_leading_term_ba}). This gives an equation with left-hand side zero
in $\Lambda (E_{8})_{\text{cyc}}$ by Lemma~\ref{lem:_vanishing_cycles} and
right-hand side $\beta \alpha \beta \alpha $ plus length 4 and 6 terms,
which are zero. We conclude that
$\beta \alpha \beta \alpha = 0 \in \Lambda (E_{8})_{\text{cyc}}$. The result
follows as the remaining length 8 paths are
\begin{equation*}
\alpha \beta \alpha \beta = \beta \alpha \beta \alpha = 0 , \ \
\alpha \beta ^{2} \alpha = \alpha ^{2} \beta ^{2} = 0, \ \ \beta ^{2}
\alpha \beta = \beta \alpha \beta ^{2} = \alpha \beta ^{3} = 0.\qedhere
\end{equation*}
\end{proof}

%l4.12 #&#
\begin{lem}
%%LEAP%%%\label{lem4.12}
\label{lem:_length_10}
The length 10 cycle
$\beta \alpha \beta \alpha \beta \in \Pi (E_{8})_{\text{cyc}}$ is non-zero
with $5 \beta \alpha \beta \alpha \beta = 0$ while
$\beta \alpha \beta \alpha \beta \in \Lambda (E_{8})_{\text{cyc}}$ is zero.
\end{lem}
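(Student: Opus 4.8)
The plan is to mirror the strategy of Lemmas~\ref{lem:_length_4} and \ref{lem:_length_6}: the statement in $\Pi(E_8)_{\text{cyc}}$ is already handled by Remark~\ref{rem:_linearly_indep} (the length-$10$ piece of $P$ is $\mathbb{Z}\{\gamma^5\}$ and $-\gamma^5 = 5\,\beta\alpha\beta\alpha\beta$, giving both $5\beta\alpha\beta\alpha\beta = 0$ and non-vanishing), so the only real content is showing $\beta\alpha\beta\alpha\beta = 0 \in \Lambda(E_8)_{\text{cyc}}$. First I would produce an explicit expansion of $\gamma$ in $\Lambda(E_8)$ by inverting the central relation $(1+\alpha)^{-1}(1+\beta)^{-1}(1+\gamma)^{-1} = 1$: since in $E_8$ the shortest arm has length $2$ we have $\alpha^3 = 0$, so $(1+\alpha)^{-1} = 1 - \alpha + \alpha^2$, and the $\beta$-arm gives $\beta^4 = 0$, so $(1+\beta)^{-1} = 1 - \beta + \beta^2 - \beta^3$; multiplying these out yields $1 + \gamma = (1-\alpha+\alpha^2)^{-1}\!{}^{-1}$-style rearrangement, concretely $\gamma + \gamma^2 - \cdots = $ a polynomial in $\alpha,\beta$ whose only term not contained in a single arm is the monomial $\beta\alpha\beta\alpha\beta$ (every monomial in $\alpha,\beta$ that is not a pure power of $\alpha$ or of $\beta$ and has length $<10$ has already been shown to vanish, by Lemma~\ref{lem:_length_8}).

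The key steps, in order: (1) cite Remark~\ref{rem:_linearly_indep} for the $\Pi(E_8)_{\text{cyc}}$ half. (2) Raise Equation~(\ref{eq:_leading_term_ba}), or rather the full inverted central relation in $\Lambda(E_8)$, to the fifth power (or suitably combine powers) so that the left-hand side is a $\mathbb{Z}$-combination of powers $\gamma^i$, each contained in the $\gamma$-arm and hence zero in $\Lambda(E_8)_{\text{cyc}}$ by Lemma~\ref{lem:_vanishing_cycles}. (3) Expand the right-hand side as a polynomial in $\alpha$ and $\beta$; collect terms by length. (4) Invoke Lemma~\ref{lem:_length_8} to kill every summand of length $2,4,6,8$, and kill every length-$10$ monomial that is not $\beta\alpha\beta\alpha\beta$ using the alternation forced by $\alpha^3 = \beta^4 = 0$ together with cyclic reordering in $\Lambda(E_8)_{\text{cyc}}$ — e.g. $\alpha^2\beta^2\alpha\beta$, $\beta^3\alpha\beta^2$, $\alpha\beta^4\cdots$, etc., each either contains a vanishing power or reduces to a shorter cycle already handled. (5) Conclude that the only surviving term is an integer multiple (in fact, up to sign, exactly one copy after cyclic reduction) of $\beta\alpha\beta\alpha\beta$, forcing it to be $0$.

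The main obstacle I expect is bookkeeping in step (4): the number of length-$10$ monomials in two noncommuting variables subject to $\alpha^3 = 0$, $\beta^4 = 0$, and cyclic equivalence is not small, and one must be careful that, after using all shorter-length vanishing results and cyclic permutations, the coefficient of $\beta\alpha\beta\alpha\beta$ that emerges is nonzero as an integer (so that one genuinely concludes $\beta\alpha\beta\alpha\beta = 0$ and not merely that some multiple vanishes — though for the application, $\beta\alpha\beta\alpha\beta = 0$ exactly in $\Lambda$ is what is needed, matching the pattern of the earlier lemmas). This is precisely the kind of finite but tedious verification the introduction flags as computer-assisted; I would organize it by writing the inverted central relation to high enough order once, truncating modulo paths of length $>10$ and modulo $\alpha^3, \beta^4$, and modulo the span of pure powers of $\alpha,\beta,\gamma$ (which lie in single arms), then reading off the image of the relation in the length-$10$ part of $\Lambda(E_8)_{\text{cyc}}$, which by Lemmas~\ref{lem:_spanning_set_general} and \ref{lem:_length_8} is spanned by the single class $\beta\alpha\beta\alpha\beta$. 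Comparing with the analogous computation over $\mathbb{Z}$ for $\Pi$, where the same length-$10$ part is $\mathbb{Z}/5$, pinpoints that the discrepancy is exactly the $5$-torsion class, completing the proof.
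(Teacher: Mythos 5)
Your treatment of the $\Pi(E_8)_{\text{cyc}}$ half (citing Remark~\ref{rem:_linearly_indep}) matches the paper. But the $\Lambda(E_8)_{\text{cyc}}$ half has a genuine gap, and it is located exactly where the paper's proof does its real work. Your plan is to expand a power of the central relation, ``truncate modulo paths of length $>10$,'' and read off the coefficient of $\beta\alpha\beta\alpha\beta$ in the length-$10$ part. This truncation is not legitimate: $\Lambda(E_8)$ is only filtered, not graded, so there is no ``length-$10$ part'' of $\Lambda(E_8)_{\text{cyc}}$, and the expansion of $\gamma^i$ in terms of $\alpha,\beta$ (via $\gamma=-\alpha-\beta+\alpha\beta+\beta^2-\alpha\beta^2$, using $\alpha^2=0$, $\beta^3=0$ --- note your nilpotency orders $\alpha^3,\beta^4$ are off; the arms of $E_8$ have lengths $1,2,4$) produces terms of length up to $30$ that you cannot discard. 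Concretely, $\gamma^2\equiv 0$ yields $\alpha\beta^2\alpha\beta^2 - 2\beta\alpha\beta\alpha\beta \equiv 0$ modulo length $\le 8$: the length-$12$ cycle $\alpha\beta^2\alpha\beta^2$ is inseparable from the class you want, and showing it vanishes in $\Lambda(E_8)_{\text{cyc}}$ is the heart of the paper's proof (three explicit commutator identities, e.g.\ $[\beta^2\alpha\beta^2\alpha,\beta]-[\alpha\beta^2\alpha\beta^2,\alpha]-[\beta^2\alpha\beta^2\alpha,\beta^2]=\alpha\beta^2\alpha\beta^2$). Nothing in your outline supplies this step.

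There is a second, quantitative reason the leading-order computation cannot suffice: if you do keep only the length-$10$ terms of $\gamma^5$, you get $(\alpha+\beta)^5\equiv 5\beta\alpha\beta\alpha\beta$ --- the \emph{same} coefficient $5$ that appears on the $\Pi$ side in Remark~\ref{rem:_linearly_indep}. So the associated-graded/leading-term information only ever yields $5\beta\alpha\beta\alpha\beta=0$, which already holds in $\Pi(E_8)_{\text{cyc}}$ and proves nothing new; the extra vanishing in $\Lambda$ must come from the higher-length corrections you propose to truncate away. The paper combines $2x=0$ (from $\gamma^2$ plus the vanishing of $\alpha\beta^2\alpha\beta^2$) with $5x=0$ (from the identity $\beta\alpha\beta\alpha\beta\alpha=\alpha\beta^2\alpha\beta^2-5\beta\alpha\beta\alpha\beta$ plus the vanishing of $\beta\alpha\beta\alpha\beta\alpha$) to conclude $x=\beta\alpha\beta\alpha\beta=0$ over $\mathbb{Z}$. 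Your step (5), which expects a unit coefficient to emerge from a single power of the relation, will not materialize.
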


\begin{proof}
The statement in $\Pi (Q)_{\text{cyc}}$ holds by Remark~\ref{rem:_linearly_indep}. To show
$\beta \alpha \beta \alpha \beta = 0 \in \Lambda (E_{8})_{\text{cyc}}$,
notice
\begin{align*}
0 = \gamma ^{2} &= \alpha \beta ^{2} \alpha \beta ^{2} - \beta ^{2}
\alpha \beta ^{2} - \alpha \beta ^{2} \alpha \beta - \alpha \beta
\alpha \beta ^{2} + \text{(length at most 8 paths)}
\\
& = \alpha \beta ^{2} \alpha \beta ^{2} - 2 \beta \alpha \beta
\alpha \beta + \text{(length at most 8 paths)}
\\
\beta \alpha \beta \alpha \beta \alpha &= \alpha \beta ^{2} \alpha
\beta ^{2} - \beta ^{2} \alpha \beta ^{2} - \beta ^{2} \alpha \beta
\alpha - \beta \alpha \beta ^{2} \alpha - \beta \alpha \beta \alpha
\beta - \alpha \beta ^{2} \alpha \beta - \alpha \beta \alpha \beta ^{2}
- \alpha \beta \alpha \beta \alpha
\\
&= \alpha \beta ^{2} \alpha \beta ^{2} - 5 \beta \alpha \beta \alpha
\beta
\end{align*}
so by Lemma~\ref{lem:_length_8}, it suffices to show
$\alpha \beta ^{2} \alpha \beta ^{2}$ and
$\beta \alpha \beta \alpha \beta \alpha =0 \in \Lambda (E_{8})_{
\text{cyc}}$. Using the relation
\begin{equation*}
\beta ^{2} \alpha \beta ^{2} \alpha = \alpha \beta \alpha \beta
\alpha \beta + \beta ^{2} \alpha \beta ^{2} + \beta ^{2} \alpha
\beta \alpha + \beta \alpha \beta ^{2} \alpha + \beta \alpha \beta
\alpha \beta + \alpha \beta ^{2} \alpha \beta + \alpha \beta \alpha
\beta ^{2} + \alpha \beta \alpha \beta \alpha
\end{equation*}
in $\Lambda (E_{8})$, notice the three commutators:
\begin{itemize}
\item[(1)]
$[\beta ^{2} \alpha \beta ^{2} \alpha , \beta ^{2}] =\alpha \beta
\alpha \beta \alpha \beta ^{2} + \beta ^{2} \alpha \beta \alpha
\beta ^{2} + \beta \alpha \beta ^{2} \alpha \beta ^{2} = \alpha
\beta \alpha \beta \alpha \beta ^{2}$
\item[(2)]
$[ \alpha \beta ^{2} \alpha \beta ^{2}, \alpha ] = \alpha \beta ^{2}
\alpha \beta ^{2} + \alpha \beta ^{2} \alpha \beta \alpha + \alpha
\beta \alpha \beta ^{2} \alpha + \alpha \beta \alpha \beta \alpha
\beta = \alpha \beta ^{2} \alpha \beta ^{2} +\alpha \beta \alpha
\beta \alpha \beta $
\item[(3)]
$[ \beta ^{2} \alpha \beta ^{2} \alpha , \beta ] = \alpha \beta
\alpha \beta \alpha \beta ^{2} + \beta ^{2} \alpha \beta \alpha
\beta + \beta \alpha \beta ^{2} \alpha \beta + \beta \alpha \beta
\alpha \beta ^{2} + \alpha \beta ^{2} \alpha \beta ^{2} + \alpha
\beta \alpha \beta \alpha \beta$ \\
$\phantom{[ \beta ^{2} \alpha \beta ^{2} \alpha , \beta ]}
= \alpha \beta \alpha \beta \alpha \beta ^{2} + \beta \alpha \beta ^{2}
\alpha \beta + \alpha \beta ^{2} \alpha \beta ^{2} + \alpha \beta
\alpha \beta \alpha \beta $
\end{itemize}
implies
\begin{equation*}
\text{(3)}-\text{(2)}-\text{(1)}: [ \beta ^{2} \alpha \beta ^{2} \alpha ,
\beta ] - [ \alpha \beta ^{2} \alpha \beta ^{2}, \alpha ] - [\beta ^{2}
\alpha \beta ^{2} \alpha , \beta ^{2}] = \alpha \beta ^{2} \alpha
\beta ^{2}.
\end{equation*}
Hence by (2), both $\alpha \beta ^{2} \alpha \beta ^{2}$ and
$\beta \alpha \beta \alpha \beta \alpha = 0 \in \Lambda (E_{8})_{
\text{cyc}}$, which implies
$\beta \alpha \beta \alpha \beta = 0 \in \Lambda (E_{8})_{\text{cyc}}$.
\end{proof}

%p4.13 #&#
\begin{prop}
%%LEAP%%%\label{prop4.13}
\label{prop:_HH_0_Em}
$\Lambda (E_{m})_{\text{cyc}} \cong \mathbb{Z}(E_{m})_{0}$ for
$m=6, 7, 8$.
\end{prop}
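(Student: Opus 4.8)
The plan is to reduce, as in the type D case (Proposition~\ref{prop:_HH_0_D_n}), the spanning set of $\Lambda(E_m)_{\text{cyc}}$ from Lemma~\ref{lem:_spanning_set_general} to just $\mathbb{Z}(E_m)_0$, by showing every positive-length cycle at the central vertex $3$ is zero. By Lemma~\ref{lem:_spanning_set_general} it suffices to handle cycles based at $3$ of length at least $4$; such a cycle, after using the multiplicative relation $\gamma = -\alpha - \beta + (\text{higher order in }\alpha,\beta)$ at vertex $3$ to eliminate $\gamma$, becomes a $\mathbb{Z}$-linear combination of words in $\alpha$ and $\beta$ alone. So the whole statement is equivalent to: every word in $\alpha,\beta$ of positive length is zero in $\Lambda(E_m)_{\text{cyc}}$.

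First I would dispose of $E_6$ and $E_7$. For $E_6$ the longest arm has length $2$, so $\gamma^3 = 0$ by Lemma~\ref{lem:_vanishing_cycles}, and the relation truncates to $\gamma = -\alpha-\beta+\alpha\beta+\beta^2-\alpha\beta^2$ (as in Equation~\eqref{eq:_leading_term_bab}), giving $\alpha^3=0$ and $\beta^3=0$ in the subalgebra $e_3\Lambda(E_6)e_3$ as well. Combined with Lemmas~\ref{lem:_vanishing_cycles}, \ref{lem:_length_4}, \ref{lem:_length_6} one sees directly that all words in $\alpha,\beta$ vanish: the only candidates up to cyclic rotation in low length are $\alpha\beta$, $\beta\alpha\beta$, $\alpha\beta^2$, and longer words, and each is reduced via the relation and the established lemmas; a short induction on length (expanding the leading term of a suitable power of the relation, exactly as in Proposition~\ref{prop:_HH_0_D_n}) finishes it. For $E_7$ the longest arm has length $3$ so $\gamma^4=0$ and $\alpha^3=\beta^3=0$ in $e_3\Lambda(E_7)e_3$; the same strategy applies, with slightly more words to check, and this is where the Magma verification (Appendix) is invoked to confirm the finitely many reductions.

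For $E_8$ the longest arm has length $5$, so $\gamma^6 = 0$, $\alpha^3 = 0$, $\beta^3 = 0$ in $e_3\Lambda(E_8)e_3$, and by Lemma~\ref{lem:_length_8} every word in $\alpha,\beta$ of length $\le 8$ is already zero in $\Lambda(E_8)_{\text{cyc}}$. It remains to kill words of length $\ge 10$. The length-$10$ cycle $\beta\alpha\beta\alpha\beta$ is zero by Lemma~\ref{lem:_length_10}, and the remaining length-$10$ words ($\alpha\beta\alpha\beta\alpha = \beta\alpha\beta\alpha\beta$, plus words containing $\alpha^2$ or $\beta^3$, which vanish since $\alpha^2=\beta^3=0$ together with the length-$8$ vanishing) are likewise zero. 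Finally, since in $e_3\Lambda(E_8)e_3$ we have $\gamma = -\alpha-\beta+\cdots$ with $\alpha^3=\beta^3=0=\gamma^6$, every word in $\alpha,\beta$ of length $\ge 12$ can be expressed, up to cyclic permutation, in terms of strictly shorter such words by expanding an appropriate power of the relation $0 = \gamma^k$ and isolating its leading term; an induction on length then shows all of them vanish. I expect the main obstacle to be the $E_8$ bookkeeping in lengths $10$ and $12$: there the relations $\alpha^3=\beta^3=0$ do not immediately collapse enough, so one genuinely needs the commutator identities of the Lemma~\ref{lem:_length_10} type (and their analogues one length up), and it is precisely these finitely many but intricate cyclic-word reductions that are checked by computer in this proposition.
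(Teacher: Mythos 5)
Your overall strategy---reduce to cycles at the central vertex, eliminate $\gamma$ via the multiplicative relation, and kill all words in $\alpha,\beta$---matches the paper's, and your use of Lemmas~\ref{lem:_length_8} and~\ref{lem:_length_10} for lengths up to $10$ is exactly right. But there is a genuine gap in how you dispose of the cycles of length $\geq 12$ in $E_8$. You propose an induction ``expanding an appropriate power of the relation $0=\gamma^k$ and isolating its leading term,'' modelled on Proposition~\ref{prop:_HH_0_D_n}. That works in type D because $\alpha^2=\beta^2=0$ forces every cyclic word in $\alpha,\beta$ to be a power of $\alpha\beta$, so there is exactly one cyclic class per degree and one relation ($\gamma^i=0$) to kill it. In $E_8$ the correct nilpotency degrees are $\alpha^2=0$, $\beta^3=0$, $\gamma^5=0$ (not $\alpha^3$, $\gamma^6$ as you write: the three arms have $1$, $2$, and $4$ arrows), and since $\beta^2\neq 0$ there are several independent cyclic words in each degree---e.g.\ $(\alpha\beta)^3$ and $(\alpha\beta^2)^2$ in path length $12$---while the powers of $\gamma$ supply at most one relation per degree. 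So the mechanism you describe cannot close the induction; one genuinely needs relations of the form $u\,r_{\text{mult}}\,v$ for nontrivial $u,v$ together with further commutators, and it is not clear a priori that these suffice.

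The paper resolves this differently: it presents $e_3\Lambda(E_8)e_3 \cong \mathbb{Z}\langle x,y\rangle/(x^2,\,y^3,\,(x+y-xy-y^2+xy^2)^5)$, computes a noncommutative Gr\"obner basis (this is where the computer assistance for this proposition actually enters---the appendix Magma code verifies the Section~\ref{sec:_construct_isos} isomorphisms, not this statement), lists all basis monomials, and observes that modulo cyclic permutation only eight classes avoid the leading terms, all of path length at most $10$; these are then killed by Lemmas~\ref{lem:_length_8} and~\ref{lem:_length_10}, so no separate treatment of length $\geq 12$ is ever needed. The paper also obtains $E_6$ and $E_7$ for free from the surjections $\Lambda(E_8)_{\text{cyc}} \twoheadrightarrow \Lambda(E_7)_{\text{cyc}} \twoheadrightarrow \Lambda(E_6)_{\text{cyc}}$, rather than by the direct case analyses you sketch (which for $E_7$ you leave entirely to an unspecified computation). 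A small further slip: $\alpha\beta\alpha\beta\alpha$ is not cyclically equal to $\beta\alpha\beta\alpha\beta$; it is cyclically $\alpha^2\beta\alpha\beta=0$.
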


\begin{proof}
The surjections
$\Lambda (E_{8})_{\text{cyc}} \twoheadrightarrow \Lambda (E_{7})_{
\text{cyc}} \twoheadrightarrow \Lambda (E_{6})_{\text{cyc}}$, imply the result
for $E_{6}$ and $E_{7}$ follows from $E_{8}$. We can identify
\begin{equation*}
e_{3} \Lambda (E_{8}) e_{3} \cong
\frac{\mathbb{Z}\langle \alpha , \beta , \gamma \rangle}{(\alpha ^{2}, \beta ^{3}, \gamma ^{5}, \gamma = -\alpha -\beta + \alpha \beta -\beta ^{2} + \alpha \beta ^{2})}
\cong
\frac{\mathbb{Z}\langle x, y \rangle}{(x^{2}, y^{3}, (x + y - xy-y^{2}+xy^{2})^{5})}.
\end{equation*}
Using the lexicographical ordering with $y > x$, this algebra has a Gr\"{o}bner
basis with leading terms
\begin{equation*}
x^{2}, \ \ y^{3}, \ \  yxyxyx, \ \  y^{2}xy^{2}x, \ \  y^{2}xyxy^{2}xy^{2},
\ \  yxyxy^{2}xyxy, \ \  yxy^{2}xyxy^{2}xyx
\end{equation*}
and a basis:
\begin{align*}
\{ 1&, \ \  x, \ \  y, \ \  yx, \ \  xy, \ \  y^{2}, \ \  xyx, \ \  y^{2}x,
\ \  yxy, \ \  xy^{2}, \ \  yxyx, \ \  xy^{2}x, \ \  xyxy, \ \
y^{2}xy, \ \  yxy^{2}, \ \  xyxyx,
\\
& y^{2}xyx, \ \  yxy^{2}x, \ \  yxyxy, \ \  xy^{2}xy, \ \  xyxy^{2},
\ \  y^{2}xy^{2}, \ \
xy^{2}xyx, \ \  xyxy^{2}x, \ \  xyxyxy, \ \  y^{2}xyxy,
\\
& yxy^{2}xy, \ \
 yxyxy^{2}, \ \  xy^{2}xy^{2}, \ \
yxy^{2}xyx, \ \  yxyxy^{2}x, \ \  xy^{2}xyxy, \ \  xyxy^{2}xy, \ \  xyxyxy^{2},
\ \  y^{2}xyxy^{2},
\\
& yxy^{2}xy^{2}, \ \ xyxy^{2}xyx, \ \  xyxyxy^{2}x, \ \  y^{2}xyxy^{2}x,
\ \  yxy^{2}xyxy, \ \  yxyxy^{2}xy, \ \  xy^{2}xyxy^{2}, \ \  xyxy^{2}xy^{2},
\\
&yxyxy^{2}xyx, \ \ xy^{2}xyxy^{2}x, \ \  xyxy^{2}xyxy, \ \  xyxyxy^{2}xy,
\ \  y^{2}xyxy^{2}xy, \ \  yxy^{2}xyxy^{2}, \ \  yxyxy^{2}xy^{2},
\\
&xyxyxy^{2}xyx, \ \ y^{2}xyxy^{2}xyx, \ \  yxy^{2}xyxy^{2}x, \ \  xy^{2}xyxy^{2}xy,
\ \  xyxy^{2}xyxy^{2}, \ \  xyxyxy^{2}xy^{2},
\\
&xy^{2}xyxy^{2}xyx, \ \  xyxy^{2}xyxy^{2}x, \ \  yxy^{2}xyxy^{2}xy,
\ \
xyxy^{2}xyxy^{2}xy \}.
\end{align*}
Modulo cyclic permutation, there are eight elements not containing the
leading term of an element in the Gr\"{o}bner basis:
\begin{align*}
\{ 1, \  x, \  y, \  yx = xy, \  y^{2}, \  y^{2}x = yxy = xy^{2}, \  yxyx
= xyxy, \  y^{2}xyx = yxy^{2}x = yxyxy = xy^{2}xy = xyxy^{2} \}.
\end{align*}
By Lemma~\ref{lem:_length_8} and Lemma~\ref{lem:_length_10} all but
$1$ are necessarily zero.
\end{proof}

\begin{cor}
\label{cor:HH_0_Em_R}
$\Lambda_R(E_m)_{\text{cyc}} = R(E_m)_0$ for any commutative ring $R$, and $m=6, 7, 8$.
\end{cor}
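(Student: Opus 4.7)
The plan is to bootstrap from the integral statement already established in Proposition~\ref{prop:_HH_0_Em} via a base change argument, exactly in the spirit of Corollary~\ref{cor:HH0_An_R}. The key observations are (i) that for star-shaped $Q$ no localization is needed, so $\Lambda_R(Q)$ is presented by the \emph{same} finite list of relations over any commutative ring $R$, and (ii) that the functor $A \mapsto A/[A,A]$ commutes with base change.

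First I would verify that $\Lambda_R(E_m) \cong \Lambda_{\mathbb{Z}}(E_m) \otimes_{\mathbb{Z}} R$. By Corollary~\ref{cor:_inverses} (via Proposition~\ref{prop:_partial_equal}), the elements $1+a^*a$ being inverted in the definition of $\Lambda_R(Q)$ are already invertible modulo the preprojective relations at non-central vertices, so for $Q$ star-shaped $\Lambda_R(Q)$ is simply the quotient of $R\overline{Q}$ by the two-sided ideal generated by the finitely many coefficients of $r_{\text{mult}}$. Since $R\overline{Q} = \mathbb{Z}\overline{Q} \otimes_{\mathbb{Z}} R$ and both the ideal $I_{\mathbb{Z}}$ of relations and its image $I_R = I_{\mathbb{Z}} \otimes_{\mathbb{Z}} R$ are compatible under tensoring, we obtain the desired base change identity.

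Second, I would check that $(-)_{\text{cyc}} = (-)/[-,-]$ commutes with $-\otimes_{\mathbb{Z}} R$. This is routine: for any $\mathbb{Z}$-algebra $A$ and any commutative ring $R$, the commutator subspace of $A \otimes_{\mathbb{Z}} R$ is generated over $R$ by elements of the form $[x,y] \otimes 1$ for $x,y \in A$, i.e.\ it equals $[A,A] \otimes_{\mathbb{Z}} R$, and right-exactness of tensor product yields
\begin{equation*}
(A \otimes_{\mathbb{Z}} R)_{\text{cyc}} \;=\; A_{\text{cyc}} \otimes_{\mathbb{Z}} R.
\end{equation*}

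Combining these with Proposition~\ref{prop:_HH_0_Em} gives
\begin{equation*}
\Lambda_R(E_m)_{\text{cyc}} \;=\; \Lambda_{\mathbb{Z}}(E_m)_{\text{cyc}} \otimes_{\mathbb{Z}} R \;=\; \mathbb{Z}(E_m)_0 \otimes_{\mathbb{Z}} R \;=\; R(E_m)_0,
\end{equation*}
as required. There is no real obstacle here; the only point requiring care is confirming the base change identity for $\Lambda_R$ itself, since the multiplicative preprojective algebra is \emph{a priori} defined using a localization that need not commute with $-\otimes_{\mathbb{Z}} R$ in general. This is precisely why the star-shaped reduction of Section~\ref{sec:_background} is essential: it eliminates the localization and makes $\Lambda_R(E_m)$ into a plain quotient of $R\overline{Q}$, after which the argument is formal.
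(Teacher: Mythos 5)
Your proposal is correct and follows exactly the route the paper intends (the paper leaves this corollary unproved, appealing to the same base-change principle it states for Corollary~\ref{cor:HH0_An_R}): since $\Lambda_{\mathbb{Z}}(E_m)_{\text{cyc}}=\mathbb{Z}(E_m)_0$ is free over $\mathbb{Z}$ and both $\Lambda_{(-)}(E_m)$ (thanks to the star-shaped reduction eliminating the localization) and $(-)_{\text{cyc}}$ commute with $-\otimes_{\mathbb{Z}}R$, the result over $R$ follows from Proposition~\ref{prop:_HH_0_Em}. You correctly isolate the only delicate point, namely that the localization in the definition of $\Lambda_R$ is rendered unnecessary by Proposition~\ref{prop:_partial_equal}.
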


%c4.14 #&#
\begin{cor}
%%LEAP%%%\label{cor4.14}
\label{cor:_no_iso_E67}
Let $R$ be a commutative ring and let $m=6, 7, 8$. Then,
$\Lambda _{R}(E_{m}) \ncong \Pi _{R}(E_{m})$ if
$2, 3 \notin R^{\times}$. And
$\Lambda _{R}(E_{8}) \ncong \Pi _{R}(E_{8})$ if
$5 \notin R^{\times}$.
\end{cor}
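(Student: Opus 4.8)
The plan is to deduce this corollary from the Hochschild homology computations exactly as in the proof of Corollary~\ref{cor:_no_iso_Dn}. First, for $m=6,7,8$ I would invoke Corollary~\ref{cor:HH_0_Em_R}, which gives $\Lambda_R(E_m)_{\text{cyc}} = R(E_m)_0$ over any commutative ring $R$; this is the ``multiplicative side is as small as possible'' half of the argument. Second, for the ``additive side is strictly bigger'' half, I would observe that the base change $\Pi_{\mathbb{Z}}(E_m) \to \Pi_R(E_m)$ induces a map on zeroth Hochschild homology, and that the relevant torsion class survives: if $2,3 \notin R^\times$, pick a maximal ideal $\mathfrak{m}$ containing a common prime divisor among $\{2,3\}$ realized in $R$ (more carefully: if $2 \notin R^\times$ use a maximal ideal containing $2$, if instead $3 \notin R^\times$ use one containing $3$), giving a residue field $k$ of characteristic $2$ or $3$, and then apply Lemma~\ref{lem:_length_4} or Lemma~\ref{lem:_length_6} to conclude $\alpha\beta \neq 0$ or $\beta\alpha\beta \neq 0$ in $\Pi_k(E_m)_{\text{cyc}}$, hence the corresponding class is nonzero in $\Pi_R(E_m)_{\text{cyc}}$. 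For $E_8$ with $5 \notin R^\times$, the same argument with a maximal ideal containing $5$ and Lemma~\ref{lem:_length_10} gives $\beta\alpha\beta\alpha\beta \neq 0$ in $\Pi_k(E_8)_{\text{cyc}}$.

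The conclusion in each case is then immediate: $\Lambda_R(E_m)_{\text{cyc}} = R(E_m)_0$ while $\Pi_R(E_m)_{\text{cyc}} \supsetneq R(E_m)_0$ (it contains a nonzero class in a positive degree), so $\Lambda_R(E_m)_{\text{cyc}} \ncong \Pi_R(E_m)_{\text{cyc}}$ as $R$-modules, and since $\text{HH}_0$ is an invariant of the algebra, $\Lambda_R(E_m) \ncong \Pi_R(E_m)$ as $R$-algebras. The hypotheses are tailored so that the bad prime in question (2 or 3 for $E_6, E_7$; 2, 3, or 5 for $E_8$) is genuinely non-invertible and hence lies in some maximal ideal, which is what makes the reduction to a residue field work.

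There is essentially no obstacle here: the corollary is a formal consequence of Corollary~\ref{cor:HH_0_Em_R} together with Lemmas~\ref{lem:_length_4}, \ref{lem:_length_6}, and \ref{lem:_length_10}, exactly paralleling Corollary~\ref{cor:_no_iso_Dn}. The only mild subtlety worth stating carefully is that ``$2,3 \notin R^\times$'' is phrased as a conjunction but we only need \emph{one} of the two bad primes to be non-invertible in order to find the residue field and the obstruction; so the hypothesis as stated is (harmlessly) stronger than necessary, and I would phrase the proof to pick whichever of the two primes is non-invertible. One should also remark, as after Corollary~\ref{cor:_no_iso_Dn}, that the argument sharpens after passing to the residue field: a corrected isomorphism (Corollary~\ref{cor:_correct_iso}) would send $\beta\alpha\beta$ (resp.\ $\beta\alpha\beta\alpha\beta$) in normal form to itself plus higher-order terms, and since this class vanishes in $\Lambda_k(E_m)_{\text{cyc}}$ but not in $\Pi_k(E_m)_{\text{cyc}}$ (using the grading on $\Pi$), no such isomorphism can exist — so one need not even compute all of $\Lambda(E_m)_{\text{cyc}}$, only the relevant lemmas.
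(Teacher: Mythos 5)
Your proposal is correct and follows essentially the same route as the paper: the paper's proof likewise combines Corollary~\ref{cor:HH_0_Em_R} (giving $\Lambda_R(E_m)_{\text{cyc}} = R(E_m)_0$) with Lemmas~\ref{lem:_length_4}, \ref{lem:_length_6}, and \ref{lem:_length_10} to see that $\Pi_R(E_m)_{\text{cyc}}$ is strictly larger when the relevant bad prime is non-invertible, and the paper makes your final ``sharpened'' observation (reduction modulo a maximal ideal plus Corollary~\ref{cor:_correct_iso}) in the remark immediately following the corollary. Your explicit passage to a residue field just spells out why non-invertibility of the prime guarantees the torsion class survives base change, which the paper leaves implicit.
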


\begin{proof}
By Corollary~\ref{cor:HH_0_Em_R}
$\Lambda _{R}(E_{m})_{\text{cyc}} = R(E_{m})_{0}$ while by Lemma~\ref{lem:_length_4} (respectively Lemma~\ref{lem:_length_6} and Lemma~\ref{lem:_length_10}) $\Pi _{R}(E_{m})_{\text{cyc}}$ is strictly larger
if $2 \notin R^{\times}$ (respectively $3 \notin R^{\times}$ and
$5 \notin R^{\times}$).
\end{proof}

As before, reducing modulo a maximal ideal containing $3$ (respectively
$5$) and using Corollary~\ref{cor:_correct_iso} the argument only relies
on computing the single class in Lemma~\ref{lem:_length_6} (respectively
Lemma~\ref{lem:_length_10}).

%s5 #&#
\section{Construction of isomorphisms}
%%LEAP%%%\label{sec5}
\label{sec:_construct_isos}

Shaw constructs an explicit isomorphism
\begin{equation*}
\Lambda _{\mathbb{Z}[1/2]}(D_{4}) \cong \Pi _{\mathbb{Z}[1/2]}(D_{4})
\qquad\quad
\begin{array}{c}
a \mapsto a \left (1-\frac{1}{2} b^{*} b \right ) , \ a^{*} \mapsto
\left (1+\frac{1}{2} b^{*} b \right )a^{*}
\\
a' \mapsto a' \text{ for } a' \in \{ e_{i}, b, b^{*}, c, c^{*} \}_{i = 1,
2, 3, 4}.
\end{array}
\end{equation*}
He then laments ``However, this example doesn't really suggest how this
question can be answered in general, because it is not practical to attempt
this analysis for the larger Dynkin quivers.'' \cite[pg. 128]{Shaw05} Practicality
aside, this is precisely what we do.

For type A,
$\Lambda _{\mathbb{Z}}(A_{n}) = \Pi _{\mathbb{Z}}(A_{n})$ follows from
Proposition~\ref{prop:_partial_equal}. In type D, Shaw's map extends to
a uniform isomorphism over $\mathbb{Z}[1/2]$. In type E, finding an isomorphism
is more cumbersome, as we now explain.%

Let $R$ be any commutative ring. Note that the space of corrected functions
$\overline{Q}_1 \rightarrow \Pi _{R}(Q)$ (see Definition~\ref{def:_corrected}) can be identified with
\begin{equation*}
\text{Hom}_{\text{corr}}( \overline{Q}_1, \Pi _{R}(Q) ) \cong \bigoplus _{a
\in \overline{Q}_{1}} e_{s(a)} \Pi ^{\geq 3}_{R}(Q) e_{t(a)}
\end{equation*}
where each arrow $a \in \overline{Q}_{1}$ is sent to itself plus an element
in $e_{s(a)} \Pi ^{\geq 3}_{R}(Q) e_{t(a)}$. A choice of $R$-basis for
$\oplus _{a \in \overline{Q}_{1}} e_{s(a)} \Pi ^{\geq 3}_{R}(Q) e_{t(a)}$
is a choice of identification with the affine space
$\mathbb{A}^{N}_{R}$ of dimension $N$. Note that $N=50, 120$, and
$354$ for $Q= E_{6}, E_{7}$, and $E_{8}$ respectively. These corrected
functions extend uniquely to a corrected $R$-algebra map
$R \overline{Q} \rightarrow \Pi _{R}(Q)$. Hence the space of corrected
isomorphisms $\varphi : \Lambda _{R}(Q) \rightarrow \Pi _{R}(Q)$ can be
identified with the vanishing set of a single element in
$\Pi _{R}(Q)$, namely the image of $r_{\text{mult}}$. Expanding
$\varphi (r_{\text{mult}})$ in a basis for
$\oplus _{i \in Q_{0}} e_{i} \Pi ^{\geq 4}_{R}(Q) e_{i}$, gives a system
of $M$ equations in $\mathbb{A}^{N}_{R}$, where $M = 22$, $61$, and
$178$ for $Q= E_{6}$, $E_{7}$, and $E_{8}$ respectively.

Since we can uniformly write the central relation
$e_{3} r_{\text{mult}}$ as
\begin{equation*}
(1+ \alpha )^{-1} (1+ \beta )^{-1} (1+ \gamma )^{-1} = 1 \implies (1-
\alpha ) = (1+ \gamma )(1+\beta ) \implies 0 = \alpha + \beta +
\gamma + \gamma \beta
\end{equation*}
these equations are at most quartic and, by design, have a linear term
that can be used for substitutions. After tediously carrying out these
substitutions and inverting bad primes, one can find a solution by exploiting
the fact that many variables can be taken to be zero. Despite the difficulty
in finding these maps, one can check they are isomorphisms more easily.
One simply needs to show that the image of $r_{\text{mult}}$ is in the ideal
$I := (r_{\text{add}})$. In the appendix we show this using Magma.

%s5.1 #&#
\subsection{Type D}
\label{sec5.1}

Define $p(x): = \sum _{i=0}^{n-3} \left ( \frac{-x}{2} \right )^{i}$. Denote
by
$p^{\text{even}}(x) := \sum _{i=0, i \ \text{even}}^{n-3} \left (
\frac{-x}{2} \right )^{i}$ and similarly by $p^{\text{odd}}(x)$ the odd
part of $p(x)$. We give the following basic facts:

%p5.1 #&#
\begin{prop}
%%LEAP%%%\label{prop5.1}
\label{prop:_poly_facts}
Consider $p(x)$ as an element of $\mathbb{Z}[1/2][x]/(x^{n-2})$.
\begin{itemize}
\item[(1)] $p(x)$ has inverse $q(x) := 1+ \frac{1}{2} x$. Hence
$p(-x)$ has inverse $q(-x)$.
\item[(2)] $p^{\text{odd}}(x) = -\frac{1}{2} x p^{\text{even}}(x)$ which
implies $x p^{\text{even}}(x) = p(-x)- p(x)$. Hence
$x p^{\text{even}}(x)+p(x) = p(-x)$.
\end{itemize}
\end{prop}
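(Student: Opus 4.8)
The plan is to verify both claims by direct manipulation of the polynomial $p(x) = \sum_{i=0}^{n-3}(-x/2)^i$ inside the truncated ring $\mathbb{Z}[1/2][x]/(x^{n-2})$, treating $p(x)$ as a geometric-series partial sum. For part (1), I would first observe that $p(x)$ is the truncation of the formal series $\sum_{i\ge 0}(-x/2)^i = 1/(1+x/2)$; concretely, multiplying out gives the telescoping identity $(1+\tfrac12 x)\,p(x) = \sum_{i=0}^{n-3}(-x/2)^i + \sum_{i=0}^{n-3}(-1)^i(x/2)^{i+1} = 1 + (-1)^{n-3}(x/2)^{n-2}$, and the last term vanishes modulo $x^{n-2}$. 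Hence $q(x)p(x) = 1$ in $\mathbb{Z}[1/2][x]/(x^{n-2})$, so $q(x) = 1+\tfrac12 x$ is the inverse of $p(x)$. Substituting $x \mapsto -x$ (an automorphism of the ring, since $(-x)^{n-2} = \pm x^{n-2}$) immediately gives that $p(-x)$ has inverse $q(-x)$.

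For part (2), I would split $p(x) = p^{\text{even}}(x) + p^{\text{odd}}(x)$ by parity of the exponent $i$. In $p^{\text{odd}}(x) = \sum_{i \text{ odd}}(-x/2)^i$ each term is $(-x/2)\cdot(-x/2)^{i-1}$ with $i-1$ even ranging over the even exponents that appear in $p^{\text{even}}$ — here one must check the index ranges match, i.e. that the largest odd $i \le n-3$ is exactly one more than the largest even $i' \le n-3$ with $i'$ contributing to $p^{\text{even}}$, which holds because the shift $i \mapsto i-1$ is a bijection from odd indices in $[0,n-3]$ to even indices in $[0,n-4] \subseteq [0,n-3]$ and the missing top even term (when $n-3$ is even) is $(-x/2)^{n-3}$, whose product with $(-x/2)$ lands in degree $n-2$ and so vanishes. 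Thus $p^{\text{odd}}(x) = (-x/2)\,p^{\text{even}}(x) = -\tfrac12 x\, p^{\text{even}}(x)$ in the truncated ring. Then $x\,p^{\text{even}}(x) = -2\,p^{\text{odd}}(x) = p^{\text{even}}(x) - p^{\text{odd}}(x) - (p^{\text{even}}(x)+p^{\text{odd}}(x)) + 2p^{\text{even}}(x)$; more cleanly, since replacing $x$ by $-x$ negates $p^{\text{odd}}$ and fixes $p^{\text{even}}$, we have $p(-x) - p(x) = 2\,(-p^{\text{odd}}(x)) = x\,p^{\text{even}}(x)$, and adding $p(x) = p^{\text{even}}(x)+p^{\text{odd}}(x)$ to both sides gives $x\,p^{\text{even}}(x) + p(x) = p^{\text{even}}(x) - p^{\text{odd}}(x) = p(-x)$.

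The only genuine subtlety — and the step I would be most careful about — is the bookkeeping of truncation: making sure that the "dropped" top-degree terms really do have degree $\ge n-2$ in each identity, so that the equalities hold in $\mathbb{Z}[1/2][x]/(x^{n-2})$ and not merely in the polynomial ring up to a correction. This is elementary but is exactly where an off-by-one in the summation bound would cause trouble, so I would track the extremal exponents explicitly in both the $q(x)p(x)$ computation and the even/odd splitting. Everything else is routine algebra in a commutative ring where $2$ is invertible.
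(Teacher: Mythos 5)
Your proposal is correct and follows essentially the same route as the paper: the telescoping product $(1+\tfrac{1}{2}x)p(x)=1-(-x/2)^{n-2}$ for (1), and the even/odd decomposition combined with the substitution $x \mapsto -x$ for (2). Your extra bookkeeping of the dropped top-degree term in $p^{\text{odd}}(x) = -\tfrac{1}{2}x\,p^{\text{even}}(x)$ when $n-3$ is even is a detail the paper dismisses as ``clear from the definition,'' and you handle it correctly.
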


\begin{proof}
(1) The second statement follows from the first replacing $x$ with
$-x$. And indeed the product
\begin{equation*}
p(x) q(x) = \sum _{i=0}^{n-3} \left ( \frac{-x}{2} \right )^{i}
\left (1+\frac{1}{2} x \right ) = \sum _{i=0}^{n-3} \left (
\frac{-x}{2} \right )^{i} - \left ( \frac{-x}{2} \right )^{i+1} = 1 -
\left (\frac{-x}{2}\right )^{n-2} = 1.
\end{equation*}
(2) The first statement is clear from the definition of $p$. The remaining parts
follow from the fact that the odd part of a function of $f$ can be written
as $f^{\text{odd}}(x) = \frac{f(x) - f(-x)}{2}$, hence
$-2f^{\text{odd}}(x) = f(-x) -f(x)$.
\end{proof}

Recall, we label the $D_{n}$ quiver
\begin{equation*}
\xymatrix{ 1 \ar[rd]^a & & & & \\
D_{n} = \qquad\quad & 3 & 4 \ar[l]_{c_{1}} & \ar [l]_{c_{2}} \cdots & \ar [l]_{c_{n-3}} n \\
2 \ar[ru]_b & & & &}
\end{equation*}
and choose an ordering on the arrows $a^{*} < b^{*} < c_{1}^{*}$ with source
3. We write $\gamma _{i} := c_{i}^{*} c_{i}$ and write
$\gamma := \gamma _{1}$.

%p5.2 #&#
\begin{prop}
%%LEAP%%%\label{prop5.2}
\label{prop:_iso_Dn}
Let $Q = D_{n}$ with vertices and arrows labelled as above. Define
$\phi : \mathbb{Z}[1/2] \overline{Q} \rightarrow \Pi _{\mathbb{Z}[1/2]}(Q)$
on generators by
\begin{equation*}
\phi (a) = a p(\gamma ) \quad \phi (a^{*}) = q(\gamma ) a^{*} \quad
\phi (b) = b \quad \phi (b^{*}) = b^{*} \quad \phi (c_{i}) = c_{i} p(-
\gamma _{i}) \quad \phi (c_{i}^{*}) = c_{i}^{*}
\end{equation*}
where $p(x): = \sum _{i=0}^{n-3} \left ( \frac{-x}{2} \right )^{i}$ and
$q(x) = 1+ \frac{1}{2} x$. Then $\phi $ descends to an isomorphism on the
quotient
$\Lambda _{\mathbb{Z}[1/2]}(Q) \rightarrow \Pi _{\mathbb{Z}[1/2]}(Q)$.
\end{prop}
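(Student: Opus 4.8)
The plan is to split the argument into (i) a formal part, showing that any well-defined \emph{corrected} algebra map $\Lambda_{\mathbb{Z}[1/2]}(Q)\to\Pi_{\mathbb{Z}[1/2]}(Q)$ is automatically an isomorphism, and (ii) the verification that $\phi$ is well defined, which carries all the content. For (i): since $p(\gamma),q(\gamma)$ and each $p(-\gamma_i)$ have constant term $1$, the map $\phi$ sends every arrow of $\overline{Q}$ to itself plus a $\mathbb{Z}[1/2]$-combination of strictly longer paths, so $\phi$ is corrected in the sense of Definition~\ref{def:_corrected}. Filter $R\overline{Q}$ by path length; this descends to a bounded filtration on both $\Pi_{\mathbb{Z}[1/2]}(Q)$ (where it is the grading) and $\Lambda_{\mathbb{Z}[1/2]}(Q)$, boundedness holding because $\Pi_{\mathbb{Z}[1/2]}(D_n)$ is a finitely generated module concentrated in finitely many degrees. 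By Remark~\ref{rem:_relations_agree_mod_4}, $r_{\text{mult}}=r_{\text{add}}+(\text{higher order})$, so the leading-term ideal of the defining ideal of $\Lambda_{\mathbb{Z}[1/2]}(Q)$ contains $(r_{\text{add}})$ and there is a canonical surjection $\Pi_{\mathbb{Z}[1/2]}(Q)\twoheadrightarrow\operatorname{gr}\Lambda_{\mathbb{Z}[1/2]}(Q)$. Composing it with $\operatorname{gr}\phi$ fixes every vertex and every arrow, hence equals the identity of $\Pi_{\mathbb{Z}[1/2]}(Q)$; therefore that surjection is injective, $\operatorname{gr}\phi$ is an isomorphism, and so $\phi$ is an isomorphism.

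For (ii), I use that $\Lambda_{\mathbb{Z}[1/2]}(Q)$ is a quotient of $R\overline{Q}$ (consequence of Proposition~\ref{prop:_partial_equal}), so $\phi$ is well defined on $R\overline{Q}$ and it suffices to check $\phi(e_i r_{\text{mult}})=0$ in $\Pi_{\mathbb{Z}[1/2]}(Q)$ for every vertex $i$. At the leaves the relations are $aa^{*}=0$, $bb^{*}=0$, $c_{n-3}c_{n-3}^{*}=0$; $\phi$ preserves them since $\phi(aa^{*})=a\,p(\gamma)q(\gamma)\,a^{*}=aa^{*}$ by Proposition~\ref{prop:_poly_facts}(1), $\phi$ fixes $b,b^{*}$, and $\phi(c_{n-3}c_{n-3}^{*})=\sum_{k}2^{-k}(c_{n-3}c_{n-3}^{*})^{k+1}=0$. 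At a valence-two vertex $j$ of the long arm ($4\le j\le n-1$), with outgoing arrow $c_{j-3}$ and incoming arrow $c_{j-2}$, the relation reads $c_{j-3}c_{j-3}^{*}=c_{j-2}^{*}c_{j-2}$; using the identity $c_{j-3}\gamma_{j-3}^{\,k}c_{j-3}^{*}=(c_{j-3}c_{j-3}^{*})^{k+1}=\gamma_{j-2}^{\,k+1}$ inside $\Pi_{\mathbb{Z}[1/2]}(Q)$ one computes $\phi(c_{j-3}c_{j-3}^{*})=\gamma_{j-2}\,p(-\gamma_{j-2})=\phi(c_{j-2}^{*}c_{j-2})$, so the two sides of $e_j r_{\text{mult}}$ agree after applying $\phi$.

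The only substantial case is the central vertex $3$, where the relation is $(1+\gamma)(1+\beta)(1+\alpha)=1$. Here $\phi(\beta)=\beta$, while (using $\gamma^{n-2}=0$, which holds since $\gamma$ lies in an arm, together with Proposition~\ref{prop:_poly_facts}) one has $\phi(\gamma)=\gamma\,p(-\gamma)=\gamma(1-\tfrac{\gamma}{2})^{-1}$ and $\phi(\alpha)=q(\gamma)\,\alpha\,p(\gamma)=(1+\tfrac{\gamma}{2})\,\alpha\,(1+\tfrac{\gamma}{2})^{-1}$. Substituting these, the required identity $(1+\phi\gamma)(1+\phi\beta)(1+\phi\alpha)=1$ reduces, after left/right multiplication by $(1\pm\tfrac{\gamma}{2})$, to $(1+\beta)(1+\tfrac{\gamma}{2})(1+\alpha)=1-\tfrac{\gamma}{2}$ in $\Pi_{\mathbb{Z}[1/2]}(D_n)$; expanding and using the preprojective relation $\alpha+\beta+\gamma=0$ at vertex $3$ together with $\alpha^{2}=\beta^{2}=0$ (the $a$- and $b$-arms have length one, Lemma~\ref{lem2.8}), this becomes $\tfrac12\gamma\alpha+\beta\alpha+\tfrac12\beta\gamma+\tfrac12\beta\gamma\alpha=0$, which holds because $\gamma\alpha=\beta\gamma=-\beta\alpha$ and $\beta\gamma\alpha=0$.

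I expect this central-vertex computation to be the main obstacle: one must expand the noncommutative product $(1+\phi\gamma)(1+\phi\beta)(1+\phi\alpha)$ with care, keep in mind that $\phi(\alpha)$ is a \emph{conjugate} of $1+\alpha$ rather than a polynomial in $\alpha,\beta,\gamma$, and invoke the nilpotence $\gamma^{n-2}=0$ to pass between $p(\pm\gamma)$ and $(1\mp\tfrac{\gamma}{2})^{-1}$. Everything else is either formal (the associated-graded argument of step (i)) or a short telescoping computation assembled from Proposition~\ref{prop:_poly_facts}.
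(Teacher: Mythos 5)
Your proposal is correct, and its overall architecture coincides with the paper's: reduce to checking that $\phi(e_i r_{\text{mult}})=0$ vertex by vertex (possible since $\Lambda_{\mathbb{Z}[1/2]}(Q)$ is a quotient of the uncompleted path algebra by Proposition~\ref{prop:_partial_equal}), handle the leaf and arm relations by the telescoping identities $p(x)q(x)=1$ and $c_i\gamma_i^k c_i^* = \gamma_{i+1}^{k+1}$, and then deduce invertibility from unitriangularity via the associated-graded argument (the surjection $\Pi(Q)\twoheadrightarrow \operatorname{gr}\Lambda(Q)$ composed with $\operatorname{gr}\phi$ being the identity). Where you genuinely diverge is the central-vertex verification, which is the heart of the matter. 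The paper works additively: it rewrites the relation as $-\alpha\beta+\alpha+\beta+\gamma$ and grinds through a normal-form reduction, establishing auxiliary identities ($\gamma\alpha=-\alpha\gamma-\gamma^2$, $\gamma^2\alpha=\alpha\gamma^2$, $\alpha\gamma^{2i+1}\beta=0$) to express everything in the basis $\{\alpha\gamma^j\}$ and then cancel using both parts of Proposition~\ref{prop:_poly_facts}. You instead keep the relation in its multiplicative form $(1+\gamma)(1+\beta)(1+\alpha)=1$ and observe that $1+\phi(\alpha)=q(\gamma)(1+\alpha)q(\gamma)^{-1}$ is a conjugate of $1+\alpha$ while $1+\phi(\gamma)=q(\gamma)q(-\gamma)^{-1}$, so that after cancelling units the whole computation collapses to the two-line identity $(1+\beta)(1+\tfrac{\gamma}{2})(1+\alpha)=1-\tfrac{\gamma}{2}$, verified from $\alpha+\beta+\gamma=0$ and $\alpha^2=\beta^2=0$. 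I checked this reduction and the final cancellation ($\gamma\alpha=\beta\gamma=-\beta\alpha$, $\beta\gamma\alpha=0$); it is valid, and arguably cleaner and more conceptual than the paper's expansion --- it makes visible \emph{why} the map works (it conjugates the $\alpha$-factor and rescales the $\gamma$-factor of the group-like relation). What it does not obviously do, and what the paper's additive normal-form method is better suited for, is generalize to the type~E cases, where no such conjugation structure is available and the paper resorts to solving the quartic system directly.
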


\begin{proof}
The additive preprojective relations are:
\begin{equation*}
\alpha + \beta + \gamma , \quad aa^{*}, \quad bb^{*}, \quad c_{n-3} c_{n-3}^{*},
\quad c_{i}c_{i}^{*} - c_{i+1}^{*} c_{i+1} \ i = 1, \dots , n-4
\end{equation*}
so in particular $\alpha ^{2} = \beta ^{2} = \gamma ^{n-2}=0$ and we can
use the results of Proposition~\ref{prop:_poly_facts}. The multiplicative
preprojective relations are:
\begin{equation*}
-\alpha \beta + \alpha + \beta + \gamma , \quad aa^{*}, \quad bb^{*},
\quad c_{n-3} c_{n-3}^{*}, \quad c_{i}c_{i}^{*} - c_{i+1}^{*} c_{i+1}
\ i = 1, \dots , n-4.
\end{equation*}
Notice $\phi $ descends to a map on the multiplicative preprojective algebra
since:
\begin{align*}
\phi (a a^{*}) &= a p(\gamma ) q(\gamma ) a^{*} = a a^{*} - a \left (
\frac{-\gamma}{2} \right )^{n-2} a^{*} \equiv 0
\\
\phi (b b^{*}) &= b b^{*} \equiv 0
\\
\phi ( c_{i} c_{i}^{*}) &= c_{i} p(-\gamma _{i}) c_{i}^{*} \equiv
\begin{cases}
c_{i+1}^{*} c_{i+1} p(-\gamma _{i}) = \phi ( c_{i+1}^{*} c_{i+1}) &
\text{if } i<n-3
\\
0 & \text{if } i = n-3.
\end{cases}
\end{align*}

It remains to show $\phi(-\alpha \beta + \alpha + \beta + \gamma) \equiv \alpha + \beta + \gamma \equiv 0$. 
First notice, we have the identities 
\begin{align}
0 = \alpha + \beta +\gamma \quad & \implies \quad \beta = -\alpha - \gamma \label{eq: elim beta} \\
0 = (\beta)^{2} = (-\alpha - \gamma)^{2} = \alpha \gamma + \gamma \alpha + \gamma^2 \quad &\implies \quad \gamma \alpha = -\alpha \gamma - \gamma^{2}   \label{eq: identities}
\end{align}
and hence we will rewrite words in $\alpha$, $\beta$, and $\gamma$ as linear combinations of the paths $\{ \alpha \gamma^{j} \}_{j=0, \dots, n-3}$.
Next observe that 
\begin{align}
\gamma^{2} \alpha &= \gamma[ -\alpha \gamma - \gamma^2] = \alpha \gamma^{2} + \gamma^{3} - \gamma^{3} = \alpha \gamma^{2}   \label{eq: commuting alpha} \\
\gamma^{2} \beta &= \beta \gamma^{2}  \label{eq: commuting beta}
\end{align}
and 
\begin{equation} \label{eq: odd vanishing}
\alpha \gamma^{2i+1} \beta = \alpha [-\alpha  -\beta]^{2i+1}\beta = \alpha [\alpha (\beta \alpha)^i + (\beta \alpha)^{i} \beta ] \beta = 0.
\end{equation}
 Therefore, 
\begin{align}
\phi(\alpha \beta) = \phi(\alpha) \phi(\beta) &= q(\gamma) \alpha p(\gamma) \beta \overset{\ref{eq: odd vanishing}}{=} q(\gamma) \alpha p^{\even}(\gamma) \beta \nonumber \\ 
&\overset{\ref{eq: commuting beta}}{=} q(\gamma) \alpha \beta  p^{\even}(\gamma) \overset{\ref{eq: elim beta}}{=} - q(\gamma) \alpha \gamma p^{\even}(\gamma).  \label{eq: alpha beta term}
\end{align}
Hence, since $\phi(\gamma) = \gamma p(-\gamma)$, we have:
\begin{align*}
\phi(-\alpha \beta + \alpha +  \beta + \gamma) &\overset{\ref{eq: alpha beta term}}{=} q(\gamma) \alpha \gamma p^{\even}(\gamma) + q(\gamma) \alpha p(\gamma) +\beta + \gamma p(-\gamma) \\
&=q(\gamma) \alpha [\gamma p^{\even}(\gamma) +  p(\gamma)] +\beta + \gamma p(-\gamma) \\
\footnotesize{\text{Prop } \ref{prop: poly facts} \text{ (2)}} &= q(\gamma) \alpha [ p(-\gamma)] + \beta + \gamma p(-\gamma) \\
%\footnotesize{\text{(def. of }} \substack{\scalebox{1.0}{$q$}}  \footnotesize{\text{)}} 
&= \left( 1 + \frac{1}{2} \gamma \right )  \alpha [ p(-\gamma)] + \beta + \gamma p(-\gamma) \\
 &\overset{\ref{eq: identities}}{=}  \alpha q(-\gamma) [ p(-\gamma) ] - \frac{1}{2} \gamma^2 [p(-\gamma)] + \beta + \gamma p(-\gamma) \\
\footnotesize{\text{Prop } \ref{prop: poly facts} \text{(1)}} &=  \alpha+\beta  + \gamma \left [ -\frac{1}{2} \gamma p(-\gamma) + p(-\gamma) \right ] \\
&=  \alpha+\beta  + \gamma \left [ q(-\gamma) p(-\gamma)  \right ] \\
\footnotesize{\text{Prop } \ref{prop: poly facts} \text{(1)}} &= \alpha + \beta +\gamma.
\end{align*}

So $\phi $ descends to a map
$\psi : \Lambda _{\mathbb{Z}[1/2]}(Q) \rightarrow \Pi _{\mathbb{Z}[1/2]}(Q)$.
Notice $\psi $ takes each path to itself plus a $\mathbb{Z}[1/2]$-linear
combination of longer paths. This implies $\psi $ is invertible as we now
explain.

Write $\Pi (Q) := \Pi _{\mathbb{Z}[1/2]}(Q)$,
$\Lambda (Q) := \Lambda _{\mathbb{Z}[1/2]}(Q)$ and denote by
$\Pi ^{\geq d}(Q)$ the vector subspace of $\Pi (Q)$ spanned by paths of
length at least $d$. Observe that the composition
\begin{equation*}
\Lambda (Q) \overset{\psi}{\rightarrow} \Pi (Q)
\overset{\text{gr}}{\rightarrow} \oplus _{i=0}^{2n} \Pi ^{\geq i}(Q)/
\Pi ^{\geq i+1}(Q)
\end{equation*}
is surjective and hence $\psi $ is surjective. For injectivity, notice
that the projection
$k \overline{Q} \rightarrow \text{gr}( \Lambda (Q))$ factors through
$\pi : \Pi (Q) \rightarrow \text{gr}( \Lambda (Q))$ since
$\text{gr}(r_{\text{mult}}) = r_{\text{add}}$. The composition
\begin{equation*}
\text{gr}(\psi ) \circ \pi : \Pi (Q) \rightarrow \text{gr}(\Lambda (Q))
\rightarrow \text{gr}(\Pi (Q)) \cong \Pi (Q)
\end{equation*}
is the identity. Since $\pi $ is surjective, $\text{gr}(\psi )$ is injective
and hence both are isomorphisms. We conclude that $\psi $ is injective
and hence a $\mathbb{Z}[1/2]$-algebra isomorphism.
\end{proof}

%c5.3 #&#
\begin{cor}
\label{cor5.3}
Let $R$ be a commutative ring and $n \geq 4$. Then
$\Lambda _{R}(D_{n}) \cong \Pi _{R}(D_{n})$ if and only if
$2 \in R^{\times}$.
\end{cor}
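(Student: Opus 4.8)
The plan is to prove both directions using results already established in the excerpt. The forward direction ($2 \in R^{\times} \implies \Lambda_R(D_n) \cong \Pi_R(D_n)$) is essentially a base-change argument off Proposition~\ref{prop:_iso_Dn}: we have an isomorphism $\psi : \Lambda_{\mathbb{Z}[1/2]}(D_n) \to \Pi_{\mathbb{Z}[1/2]}(D_n)$, and when $2 \in R^\times$ there is a unique ring map $\mathbb{Z}[1/2] \to R$, so tensoring gives $\Lambda_R(D_n) = \Lambda_{\mathbb{Z}[1/2]}(D_n) \otimes_{\mathbb{Z}[1/2]} R \cong \Pi_{\mathbb{Z}[1/2]}(D_n) \otimes_{\mathbb{Z}[1/2]} R = \Pi_R(D_n)$. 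The only thing to check carefully is that forming the (multiplicative) preprojective algebra commutes with base change of the ground ring; for $\Pi$ this is immediate since it is a quotient of a path algebra by an ideal with a $\mathbb{Z}$-defined generator, and for $\Lambda$ over a star-shaped quiver one invokes Proposition~\ref{prop:_partial_equal} (or Corollary~\ref{cor:_inverses}) to see that $\Lambda_R(D_n)$ is likewise a quotient of $R\overline{Q}$ by a base-changeable ideal, so the localization plays no role.

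The reverse direction ($\Lambda_R(D_n) \cong \Pi_R(D_n) \implies 2 \in R^\times$) is the contrapositive of Corollary~\ref{cor:_no_iso_Dn}, which is already proved in the excerpt: if $2 \notin R^\times$ then $\Lambda_R(D_n)_{\mathrm{cyc}} = R(D_n)_0$ (Corollary~\ref{cor:_HH0_Dn}) while $\alpha\beta \neq 0$ in $\Pi_R(D_n)_{\mathrm{cyc}}$ (Lemma~\ref{lem:_length_4}), so the two algebras have non-isomorphic zeroth Hochschild homology and hence are not isomorphic as $R$-algebras.

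So the proof is essentially a two-line assembly: ``If $2 \in R^\times$, apply Proposition~\ref{prop:_iso_Dn} and base change along $\mathbb{Z}[1/2] \to R$; conversely, if $2 \notin R^\times$, apply Corollary~\ref{cor:_no_iso_Dn}.'' I would add one sentence justifying base-change compatibility for the multiplicative side. The main (and only mild) obstacle is being careful that $\Lambda$ commutes with base change — the localization at $(1+a^*a)$ could a priori interfere — but Proposition~\ref{prop:_partial_equal} neutralizes this for star-shaped $Q$, since it identifies $\Lambda_R(D_n)$ with a quotient of $R\overline{Q}$ by the ideal generated by $r_{\mathrm{mult}}$ (with only finitely many, $\mathbb{Z}$-defined, higher-order terms once we expand the inverses via nilpotence), and such quotients manifestly commute with $- \otimes_{\mathbb{Z}} R$.

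\begin{proof}
Suppose first that $2 \in R^{\times}$. Then the structure map $\mathbb{Z} \to R$ factors uniquely through $\mathbb{Z}[1/2]$. Both $\Pi_{-}(D_n)$ and $\Lambda_{-}(D_n)$ are compatible with base change along a ring homomorphism: for $\Pi$ this is clear, as it is the quotient of the path algebra of $\overline{D_n}$ by the ideal generated by $r_{\mathrm{add}}$, whose coefficients are integers; for $\Lambda$, Proposition~\ref{prop:_partial_equal} (applied with central vertex $v = 3$, using that each $a^{*}a$ is nilpotent by Corollary~\ref{cor:_inverses}, so that the defining ideal of $\Lambda$ may be generated inside $R\overline{D_n}$ by $r_{\mathrm{mult}}$, a fixed $\mathbb{Z}$-defined element after expanding the geometric series for the inverses) shows that $\Lambda_R(D_n)$ is the quotient of $R\overline{D_n}$ by that same ideal. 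Hence $\Lambda_R(D_n) \cong \Lambda_{\mathbb{Z}[1/2]}(D_n) \otimes_{\mathbb{Z}[1/2]} R$ and $\Pi_R(D_n) \cong \Pi_{\mathbb{Z}[1/2]}(D_n) \otimes_{\mathbb{Z}[1/2]} R$. Applying $- \otimes_{\mathbb{Z}[1/2]} R$ to the isomorphism $\psi$ of Proposition~\ref{prop:_iso_Dn} gives an $R$-algebra isomorphism $\Lambda_R(D_n) \cong \Pi_R(D_n)$.

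Conversely, suppose $2 \notin R^{\times}$. By Corollary~\ref{cor:_no_iso_Dn}, $\Lambda_R(D_n) \ncong \Pi_R(D_n)$.
\end{proof}
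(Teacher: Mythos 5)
Your proposal is correct and follows essentially the same route as the paper: the forward direction transports the explicit isomorphism of Proposition~\ref{prop:_iso_Dn} along $\mathbb{Z}[1/2] \to R$, and the converse is exactly Corollary~\ref{cor:_no_iso_Dn}. Your extra paragraph justifying that $\Lambda$ commutes with base change (via Proposition~\ref{prop:_partial_equal}) makes explicit a point the paper treats more casually by ``regarding the coefficients of $\psi$ as living in $R$,'' but it is the same argument.
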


\begin{proof}
If $2 \in R^{\times}$ then the unique (unital) ring map
$\mathbb{Z}\rightarrow R$ factors through $\mathbb{Z}[1/2]$, giving a non-zero
map $\mathbb{Z}[1/2] \rightarrow R$. Hence one can regard the coefficients
in the map $\psi $ in Proposition~\ref{prop:_iso_Dn} as living in
$R$. So $\psi $ defines an isomorphism of $R$-algebras
$\Lambda _{R}(D_{n}) \cong \Pi _{R}(D_{n})$. Corollary~\ref{cor:_no_iso_Dn} establishes the converse.
\end{proof}

%s5.2 #&#
\subsection{Type E}
\label{sec5.2}

For ease of notation, in this section $S := \mathbb{Z}[1/2, 1/3]$.
%
%p5.4 #&#
\begin{prop}
%%LEAP%%%\label{prop5.4}
\label{prop:_iso_E6}
The map $\varphi : S\overline{E}_{6} \rightarrow \Pi _{S}(E_{6})$ defined
to be the identity on all vertices and arrows except:
\begin{equation*}
\varphi (b) = b r(\alpha , \gamma ) , \quad \varphi (b^{*}) = s(
\alpha , \gamma ) b^{*} \quad \varphi (c) = ct(\alpha , \gamma ),
\quad \varphi (c^{*}) = u(\alpha , \gamma )c^{*}
\end{equation*}
where
\begin{align*}
r(\alpha , \gamma ) &:= (1-1/2\alpha -1/12 \alpha \gamma -1/24
\alpha \gamma ^{2}) , \quad s(\alpha , \gamma ) := (1-1/2 \gamma -1/6
\alpha \gamma )
\\
t(\alpha , \gamma ) &:= (1- 1/12 \gamma \alpha ), \quad u(\alpha ,
\gamma ) :=(1+1/2 \gamma + 1/12 \gamma \alpha - 1/24 \alpha \gamma
\alpha ),
\end{align*}
descends to an isomorphism
$\Lambda _{S}(E_{6}) \cong \Pi _{S}(E_{6})$.
\end{prop}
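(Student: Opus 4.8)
The plan is to mimic the two-stage structure of the proof of Proposition~\ref{prop:_iso_Dn}. First I would verify that $\varphi$ descends to a homomorphism $\Lambda_S(E_6)\to\Pi_S(E_6)$, i.e. that $\varphi(r_{\text{mult}})$ lies in the two-sided ideal $(r_{\text{add}})$ of $\Pi_S(E_6)$; then I would deduce that this homomorphism is an isomorphism by the same associated-graded argument used there. Only the first stage requires real work.

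To set up the descent, record the additive relations of $E_6$ in the notation of Section~\ref{sec:_HH_0}: at the central vertex $3$ we have $\alpha+\beta+\gamma=0$, and since the three arms have lengths $1,2,2$ this forces $\alpha^2=0$, $\beta^3=0$, $\gamma^3=0$; the remaining relations are the arm relations, in particular $aa^*=dd^*=ee^*=0$, $bb^*=d^*d$, $cc^*=e^*e$. Because $\varphi$ is the identity on every arrow of $\overline{E}_6$ except $b,b^*,c,c^*$, the component $\varphi(e_i r_{\text{mult}})$ equals $e_i r_{\text{mult}}$ at the vertices $1,4,6$, which meet only unmodified arrows, and $e_i r_{\text{mult}}$ already vanishes in $\Pi_S(E_6)$ there by Proposition~\ref{prop:_partial_equal}. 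It then remains to check the relation at vertices $2$, $5$ and at the central vertex $3$. The vertex-$2$ check reduces to showing $b\,r(\alpha,\gamma)s(\alpha,\gamma)\,b^*=bb^*$ in $\Pi_S(E_6)$: one multiplies out $r(\alpha,\gamma)s(\alpha,\gamma)$, reduces modulo $\alpha^2=0$, $\gamma^3=0$ and $\gamma=-\alpha-\beta$ to an expression of the form $1+\tfrac12\beta+(\text{words of length}\ \ge 6)$, and then sandwiches between $b$ and $b^*$, using $b\beta^{k}b^*=(bb^*)^{k+1}$ together with $(bb^*)^2=(d^*d)^2=d^*(dd^*)d=0$ to kill everything but $bb^*$. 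The vertex-$5$ check is identical with $t(\alpha,\gamma),u(\alpha,\gamma)$ in place of $r(\alpha,\gamma),s(\alpha,\gamma)$.

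The central relation is where the specific coefficients of $r,s,t,u$ earn their keep, and this is the step I expect to be the main obstacle. Writing $e_3 r_{\text{mult}}$ as $\alpha+\beta+\gamma+\gamma\beta$ (using $\alpha^2=0$, exactly as in Section~\ref{sec:_construct_isos}), one expands $\varphi(\alpha+\beta+\gamma+\gamma\beta)$ from $\varphi(\alpha)=\alpha$, $\varphi(\beta)=s(\alpha,\gamma)\,\beta\,r(\alpha,\gamma)$, $\varphi(\gamma)=u(\alpha,\gamma)\,\gamma\,t(\alpha,\gamma)$ and $\varphi(\gamma\beta)=u(\alpha,\gamma)\,\gamma\,t(\alpha,\gamma)\,s(\alpha,\gamma)\,\beta\,r(\alpha,\gamma)$, substitutes $\beta=-\alpha-\gamma$ throughout, rewrites every monomial as a combination of words in $\alpha,\gamma$ using $\alpha^2=0$ and $\gamma^3=0$, and then applies the arm-vanishing facts (a cycle contained in one arm is zero; ``odd'' sandwiched monomials vanish as in Equation~(\ref{eq: odd vanishing})) together with the additive relation $\alpha+\beta+\gamma=0$ to see that the total collapses to $\alpha+\beta+\gamma\equiv0$. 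Concretely this means tracking the length-$6$, $8$ and higher monomials produced by the quartic term $\gamma\beta$ and confirming that they cancel; the polynomials $r,s,t,u$ were produced precisely to make this happen, by solving the (at most quartic) system of Section~\ref{sec:_construct_isos} after inverting $2$ and $3$, and the Appendix records a Magma check that $\varphi(r_{\text{mult}})\in(r_{\text{add}})$.

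For the second stage, note that $r(\alpha,\gamma),s(\alpha,\gamma),t(\alpha,\gamma),u(\alpha,\gamma)$ are each $1$ plus a combination of paths of positive length, so $\varphi$ sends every arrow to itself plus paths of length $\ge 3$; in particular $\varphi$ is corrected in the sense of Definition~\ref{def:_corrected}. The rest is verbatim the last paragraph of the proof of Proposition~\ref{prop:_iso_Dn}: the composite $\Lambda_S(E_6)\xrightarrow{\varphi}\Pi_S(E_6)\to\text{gr}\,\Pi_S(E_6)$ hits every generator and hence is surjective, so $\varphi$ is surjective; and since $\text{gr}(r_{\text{mult}})=r_{\text{add}}$ (Remark~\ref{rem:_relations_agree_mod_4}) the projection $S\overline{E}_6\to\text{gr}\,\Lambda_S(E_6)$ factors through a surjection $\pi:\Pi_S(E_6)\to\text{gr}\,\Lambda_S(E_6)$ with $\text{gr}(\varphi)\circ\pi=\mathrm{id}$, forcing $\pi$, hence $\text{gr}(\varphi)$, hence $\varphi$, to be an isomorphism (the path-length filtration on $\Pi_S(E_6)$ is bounded, so inducing an isomorphism on associated gradeds suffices).
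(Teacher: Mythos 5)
Your proposal is correct and follows essentially the same route as the paper: reduce the descent check to the relations at vertices $2$, $5$, and $3$ (the others being untouched), verify $b\,rs\,b^*=bb^*$, $c\,tu\,c^*=cc^*$, and $\varphi(\alpha+\beta+\gamma+\gamma\beta)\equiv 0$, then conclude invertibility via the associated-graded argument from Proposition~\ref{prop:_iso_Dn}. The only differences are cosmetic: you defer the explicit polynomial identities to the appendix's Magma verification where the paper records them in the proof, and your justification for killing the length-$\ge 6$ terms at vertex $2$ via $b\beta^k b^*=(bb^*)^{k+1}$ and $(bb^*)^2=0$ is slightly underpowered since those terms (e.g.\ $b\alpha\beta\alpha\beta b^*$) are not pure powers of $\beta$ --- though they do vanish, and the paper likewise only asserts this.
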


\begin{proof}
We need to show $\varphi (r_{\text{mult}}) = 0$ and hence in this case it
suffices to show
\begin{equation*}
b r(\alpha , \gamma ) s(\alpha , \gamma ) b^{*} - d^{*}d = 0, \quad c t(
\alpha , \gamma ) u(\alpha , \gamma ) c^{*} - e^{*} e = 0
\end{equation*}
\begin{equation*}
\alpha + s(\alpha , \gamma ) \beta r(\alpha , \gamma ) + u(\alpha ,
\gamma ) \gamma t(\alpha , \gamma ) + u(\alpha , \gamma ) \gamma t(
\alpha , \gamma )s(\alpha , \gamma ) \beta r(\alpha , \gamma ) = 0.
\end{equation*}
For the first relation notice
\begin{equation*}
r(\alpha , \gamma )s(\alpha , \gamma ) = 1/144 \alpha \beta \alpha
\beta ^{2} + 1/72 \alpha \beta \alpha \beta + 1/2 \beta + 1
\end{equation*}
and hence
$b r(\alpha , \gamma ) s(\alpha , \gamma ) b^{*} = b b^{*}$ since
$b \beta b^{*} = 0$ and $b \alpha \beta \alpha \beta b^{*} =0$. Similarly,
\begin{equation*}
t(\alpha , \gamma ) u(\alpha , \gamma ) = u(\alpha , \gamma ) t(
\alpha , \gamma ) = 1/144 \gamma \alpha \gamma ^{2} + 1/144 \gamma ^{2}
\alpha \gamma + 1/24 \alpha \gamma ^{2} + 1/24 \gamma \alpha \gamma + 1/2
\gamma + 1
\end{equation*}
and hence $c t(\alpha , \gamma ) u(\alpha , \gamma ) c^{*} = cc^{*}$. Finally,
one can compute
\begin{equation*}
u(\alpha , \gamma ) \gamma t(\alpha , \gamma ) + s(\alpha , \gamma )
\beta r(\alpha , \gamma ) = \gamma + \beta + u(\alpha , \gamma )
\gamma t(\alpha , \gamma ) s(\alpha , \gamma ) \beta r(\alpha ,
\gamma )
\end{equation*}
as desired, with each side equal to\nopagebreak
\begin{equation*}
1/24 \gamma ^{2}\alpha \gamma ^{2} + 1/12 \gamma ^{2} \alpha \gamma + 1/2
\gamma ^{2} \alpha + \gamma \beta + \gamma + \beta .
\end{equation*}
We conclude that $\varphi $ descends to a map
$\Lambda _{S}(E_{6}) \cong \Pi _{S}(E_{6})$, which is an isomorphism as
argued in the proof of Proposition~\ref{prop:_iso_Dn} since
$r(\alpha , \gamma ), s(\alpha , \gamma ), t(\alpha , \gamma )$, and
$u(\alpha , \gamma )$ are all invertible.
\end{proof}

%p5.5 #&#
\begin{prop}
%%LEAP%%%\label{prop5.5}
\label{prop:_iso_E7}
$\Lambda _{S}(E_{7}) \cong \Pi _{S}(E_{7})$.
\end{prop}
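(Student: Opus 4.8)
The plan is to follow the same strategy used for $E_{6}$ in Proposition~\ref{prop:_iso_E6}, but now carrying the extra arm. Recall from Fig.~\ref{fig:Dynkin_diagrams} that $E_{7}$ has arms of length $1$ (through $a$), $2$ (through $b,d$), and $4$ (through $c,e,f$), all oriented toward the central vertex $3$, so $\alpha = a^{*}a$, $\beta = b^{*}b$, $\gamma = c^{*}c$ satisfy $\alpha^{2}=0$, $\beta^{3}=0$, and $\gamma^{5}=0$ in $\Pi_{S}(E_{7})$, where $S = \mathbb{Z}[1/2,1/3]$. By Corollary~\ref{cor:_inverses} the multiplicative preprojective algebra is a quotient of $S\overline{E}_{7}$ without localization, and by Remark~\ref{rem:_relations_agree_mod_4} we have $\mathrm{gr}(r_{\text{mult}}) = r_{\text{add}}$. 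So, exactly as in the $E_{6}$ case, I would posit a corrected map $\varphi : S\overline{E}_{7} \to \Pi_{S}(E_{7})$ that is the identity on all vertices and on the arrows $a, a^{*}, d, d^{*}, e, e^{*}, f, f^{*}$, and sends
\begin{equation*}
\varphi(b) = b\, r(\alpha,\gamma), \quad \varphi(b^{*}) = s(\alpha,\gamma)\, b^{*}, \quad \varphi(c) = c\, t(\alpha,\gamma), \quad \varphi(c^{*}) = u(\alpha,\gamma)\, c^{*}
\end{equation*}
for suitable polynomials $r,s,t,u$ in the noncommuting variables $\alpha,\gamma$ with coefficients in $S$, each with constant term $1$ (hence invertible in the completed, and in fact in the finite-dimensional, algebra).

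The key steps are then: (1) write out $e_{3} r_{\text{mult}}$ in the uniform form $0 = \alpha + \beta + \gamma + \gamma\beta$ as in Section~\ref{sec:_construct_isos}, together with the valence-two relations $bb^{*} - d^{*}d$, $cc^{*} - e^{*}e$, $dd^{*}-a^{*}a$... wait, more precisely the chain of relations along each arm; (2) impose $\varphi(r_{\text{mult}}) = 0$, which reduces (since $\varphi$ is the identity on the outer arrows) to the three identities
\begin{equation*}
b\, r(\alpha,\gamma) s(\alpha,\gamma)\, b^{*} - d^{*}d = 0, \qquad c\, t(\alpha,\gamma) u(\alpha,\gamma)\, c^{*} - e^{*}e = 0,
\end{equation*}
\begin{equation*}
\alpha + s(\alpha,\gamma)\,\beta\, r(\alpha,\gamma) + u(\alpha,\gamma)\,\gamma\, t(\alpha,\gamma) + u(\alpha,\gamma)\,\gamma\, t(\alpha,\gamma) s(\alpha,\gamma)\,\beta\, r(\alpha,\gamma) = 0
\end{equation*}
in $\Pi_{S}(E_{7})$, using that $b\beta^{2}b^{*} = b b^{*} b^{*}b$... rather, using the relations $b\beta b^{*}=0$ etc. coming from the arm relations, exactly as in the $E_{6}$ proof; (3) solve the resulting system for the coefficients of $r,s,t,u$ by the substitution procedure described at the end of Section~\ref{sec:_construct_isos} — the linear term in the central relation lets one eliminate variables, and many coefficients can be taken to be zero — working modulo the nilpotence relations $\alpha^{2}=0$, $\beta^{3}=0$, $\gamma^{5}=0$; (4) conclude as in Proposition~\ref{prop:_iso_Dn} that the resulting map $\varphi$ descends to $\Lambda_{S}(E_{7}) \to \Pi_{S}(E_{7})$ and is an isomorphism because it is corrected (identity modulo higher-order terms) and $\mathrm{gr}(r_{\text{mult}}) = r_{\text{add}}$ forces $\mathrm{gr}(\varphi)$ to be an isomorphism; (5) verify, with the Magma code in the appendix, that the explicit polynomials produced in step (3) actually send $r_{\text{mult}}$ into the ideal $(r_{\text{add}})$.

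The main obstacle is step (3): unlike $E_{6}$, where the arm of length $2$ makes $\beta^{3}=0$ and keeps the polynomials short, in $E_{7}$ the longest arm has length $4$ so $\gamma^{5}=0$ and the relevant completed subalgebra $e_{3}\Pi_{S}(E_{7})e_{3}$ is substantially larger; the number of free parameters is $N = 120$ with $M = 61$ equations (as recorded in Section~\ref{sec:_construct_isos}), so finding a solution of the quartic system by hand is impractical and the computation is genuinely computer-assisted — this is exactly why the statement is recorded as a bare proposition and its verification is deferred to the appendix. A secondary subtlety is checking that $r,s,t,u$ remain invertible: their constant terms are $1$ and all of $\alpha,\beta,\gamma$ act nilpotently on $e_{3}\Pi_{S}(E_{7})e_{3}$, so invertibility is automatic, and the $\mathrm{gr}$ argument of Proposition~\ref{prop:_iso_Dn} then applies verbatim to upgrade the algebra homomorphism to an isomorphism.
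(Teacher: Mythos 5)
Your overall strategy --- posit a corrected map, reduce $\varphi(r_{\text{mult}})=0$ to relations at the vertices $2$, $5$ and $3$, solve the resulting polynomial system, verify with Magma, and conclude via the associated-graded argument of Proposition~\ref{prop:_iso_Dn} --- is exactly the paper's. But the specific ansatz you commit to is provably unsolvable, and this is precisely where $E_7$ differs from $E_6$. You take $\varphi$ to be the identity on $e,e^{*},f,f^{*}$ and modify only $b,b^{*},c,c^{*}$ by factors $r,s,t,u \in e_{3}\Pi_{S}(E_7)e_{3}$ with constant term $1$. The relation at vertex $5$ then forces $c\,(tu-1)\,c^{*}=0$. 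In $E_6$ the arm through $c$ has two arrows, so $c\gamma c^{*}=(cc^{*})^{2}=(e^{*}e)^{2}=e^{*}(ee^{*})e=0$ and this constraint is harmless; in $E_7$ that arm has three arrows (so $\gamma^{4}=0$, not $\gamma^{5}=0$ as you state --- you have described the $E_8$ arm) and $(cc^{*})^{2}=e^{*}f^{*}fe\neq 0$. Since $(e_{5}\Pi e_{5})_{4}$ is $2$-dimensional (compute the $t^{4}$ coefficient of $(1-Ct+t^{2})^{-1}$), the only linear relation among $c\alpha c^{*},c\beta c^{*},c\gamma c^{*}$ is that they sum to zero; hence, writing $t_{2}=t_{a}\alpha+t_{c}\gamma$, $u_{2}=u_{a}\alpha+u_{c}\gamma$, etc.\ for the length-$2$ parts, vertex $5$ forces $u_{2}=-t_{2}$. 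Likewise $(e_{2}\Pi e_{2})_{4}$ is spanned by $b\gamma b^{*}=-b\alpha b^{*}$ (as $b\beta b^{*}=(d^{*}d)^{2}=0$), so vertex $2$ forces $r_{a}+s_{a}=r_{c}+s_{c}$. Substituting $u_{2}=-t_{2}$ into the length-$4$ part of the central relation $\alpha+s\beta r+u\gamma t+u\gamma t\,s\beta r=0$ and expanding in the basis $\{\alpha\beta,\beta\alpha,\beta^{2}\}$ of $(e_{3}\Pi e_{3})_{4}$ (using $\gamma=-\alpha-\beta$ and $\alpha^{2}=0$) yields $t_{a}+s_{a}-s_{c}=1$ and $t_{a}=r_{a}-r_{c}$, whence $1=t_{a}+s_{a}-s_{c}=(r_{a}+s_{a})-(r_{c}+s_{c})=0$, a contradiction. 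This is why the paper's map necessarily perturbs the next arrows out along the long arm, $e\mapsto e+\tfrac14 ecc^{*}$ and $e^{*}\mapsto e^{*}+\tfrac14 cc^{*}e^{*}$, which relaxes the vertex-$5$ constraint to $c(tu)c^{*}=cc^{*}+\tfrac12(cc^{*})^{2}$ and renders the system consistent (the paper's coefficients satisfy $t_{c}+u_{c}=\tfrac12$, $t_{a}+u_{a}=0$).

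Beyond the failed ansatz, the proposal never exhibits the map: the entire content of the proposition is the existence of an explicit solution over $\mathbb{Z}[1/2,1/3]$, and ``solve the quartic system by computer'' is a plan rather than a proof. The parts you do argue --- invertibility of the correction factors from nilpotence of $\alpha,\beta,\gamma$, and the upgrade from algebra homomorphism to isomorphism via the associated graded --- are correct and match the paper.
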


\begin{proof}
We claim that the map
$\phi : S \overline{E_{7}} \rightarrow \Pi _{S}(E_{7})$ given by the identity
on vertices and the arrows $a, a^{*}, d, d^{*}, f, f^{*}$ and sending
\begin{align*}
e &\mapsto 1/4ecc^{*} + e \quad e^{*} \mapsto 1/4cc^{*}e^{*} + e^{*}
\\
b &\mapsto -1/864b(\alpha \gamma )^{3}\alpha + 1/432b(\alpha \gamma )^{3}
+ 1/288b(\alpha \gamma )^{2}\alpha - 1/144b(\gamma \alpha )^{2}
\\
& \quad - 1/144b\alpha \gamma ^{2}\alpha - 1/144b(\alpha \gamma )^{2} +
1/24b\alpha \gamma \alpha - 1/12b\alpha \gamma - 1/2b\alpha + b;
\\
b^{*} &\mapsto 1/144\alpha \gamma \alpha \gamma b^{*} - 1/6\alpha
\gamma b^{*} - 1/2\gamma b^{*} + b^{*}
\\
c &\mapsto -1/6912c(\alpha \gamma )^{3}\alpha + 1/432c(\alpha \gamma )^{2}
\gamma \alpha + 7/3456c(\alpha \gamma )^{3} - 1/288c\gamma \alpha
\gamma ^{2}\alpha
\\
& \quad - 1/288c(\alpha \gamma )^{2}\gamma + 1/144c\gamma \alpha
\gamma ^{2} - 1/72c\alpha \gamma ^{2}\alpha - 1/144c(\alpha \gamma )^{2}
\\
& \quad + 1/24c\gamma \alpha \gamma + 1/24c\alpha \gamma ^{2} + 1/24c
\alpha \gamma \alpha + 1/12c\gamma ^{2} - 1/12c\gamma \alpha + 1/2c
\gamma + c
\\
c^{*} &\mapsto -1/3456\gamma \alpha \gamma \alpha \gamma \alpha c^{*} +
1/12\gamma ^{2}c^{*} + 1/12\gamma \alpha c^{*} + c^{*}
\end{align*}
descends to an isomorphism
$\Lambda _{S}(E_{7}) \cong \Pi _{S}(E_{7})$. The fact that the map descends
is verified in the appendix and its form ensures that the map is an isomorphism.
\end{proof}

%c5.6 #&#
\begin{cor}
\label{cor5.6}
Let $R$ be a commutative ring and let $m=6, 7$. Then
$\Lambda _{R}(E_{m}) \cong \Pi _{R}(E_{m})$ if and only if
$2, 3 \in R^{\times}$.
\end{cor}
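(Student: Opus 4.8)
The plan is to copy the structure of the proof of Corollary~\ref{cor5.3} (the type~$D$ statement), now feeding in the explicit isomorphisms constructed over $S=\mathbb{Z}[1/2,1/3]$ in Propositions~\ref{prop:_iso_E6} and~\ref{prop:_iso_E7} for one direction and the Hochschild-homology obstruction of Corollary~\ref{cor:_no_iso_E67} for the other.

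First I would handle the ``if'' direction. Suppose $2,3\in R^{\times}$. Then the unique unital ring homomorphism $\mathbb{Z}\to R$ sends $2$ and $3$ to units, hence factors uniquely through $S=\mathbb{Z}[1/2,1/3]$, producing a ring map $S\to R$. I would then reinterpret the coefficients in the formulas for $\varphi$ (Proposition~\ref{prop:_iso_E6}, case $m=6$) and for $\phi$ (Proposition~\ref{prop:_iso_E7}, case $m=7$) as elements of $R$ via this map. The equalities verifying $\varphi(r_{\text{mult}})\in(r_{\text{add}})$, resp.\ $\phi(r_{\text{mult}})\in(r_{\text{add}})$, are identities among specific elements of the path algebra valid over $S$, so they persist after applying $-\otimes_{S}R$; hence the corrected $R$-algebra map descends to a homomorphism $\Lambda_{R}(E_{m})\to\Pi_{R}(E_{m})$. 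That this homomorphism is an isomorphism follows from the associated-graded argument already spelled out in the proof of Proposition~\ref{prop:_iso_Dn}: it uses only that the map is corrected (each arrow goes to itself plus higher-order terms) and that $\mathrm{gr}(r_{\text{mult}})=r_{\text{add}}$, both of which hold over any base. Equivalently, one observes that forming $\Lambda$ and $\Pi$ of an ADE quiver commutes with base change, each being a quotient of $R\overline{Q}$ by the relations coming from $r_{\text{mult}}$, resp.\ $r_{\text{add}}$, and that base change of an isomorphism is an isomorphism.

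For the ``only if'' direction, suppose $\Lambda_{R}(E_{m})\cong\Pi_{R}(E_{m})$. If $2\notin R^{\times}$, then Corollary~\ref{cor:_no_iso_E67} --- via Lemma~\ref{lem:_length_4}, since $\alpha\beta$ is nonzero in $\Pi_{R}(E_{m})_{\text{cyc}}$ but zero in $\Lambda_{R}(E_{m})_{\text{cyc}}=R(E_{m})_{0}$ --- gives a contradiction; and if $3\notin R^{\times}$, then Corollary~\ref{cor:_no_iso_E67} --- via Lemma~\ref{lem:_length_6}, using $\beta\alpha\beta$ --- again gives a contradiction. Hence $2,3\in R^{\times}$.

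I do not expect a genuine obstacle: every ingredient is already in place. The only point meriting a sentence of care is transferring the ``corrected $\Rightarrow$ isomorphism'' argument of Proposition~\ref{prop:_iso_Dn} from $\mathbb{Z}[1/2]$ to an arbitrary $R$ with $2,3$ invertible, and that is immediate because the argument is purely formal.
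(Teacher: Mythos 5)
Your proposal is correct and follows exactly the paper's argument: the forward direction transports the explicit isomorphisms of Propositions~\ref{prop:_iso_E6} and~\ref{prop:_iso_E7} along the map $\mathbb{Z}[1/2,1/3]\to R$ obtained from the invertibility of $2$ and $3$, and the converse is Corollary~\ref{cor:_no_iso_E67}. The extra remarks you make about base change and the associated-graded argument are fine elaborations of what the paper leaves implicit.
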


\begin{proof}
The assumption that $2, 3 \in R^{\times}$ implies that $R$ receives a non-zero
map from $S$, and allows one to view the coefficients of $\varphi $ in
Proposition~\ref{prop:_iso_E6} and the coefficients of $\phi $ in Proposition~\ref{prop:_iso_E7} in $R$, giving the isomorphisms. Corollary~\ref{cor:_no_iso_E67} establishes the converse.
\end{proof}

%p5.7 #&#
\begin{prop}
%%LEAP%%%\label{prop5.7}
\label{prop:_iso_E8}
Let $S' :=\mathbb{Z}[1/2, 1/3, 1/5]$. Then
$\Lambda _{S'}(E_{8}) \cong \Pi _{S'}(E_{8})$.
\end{prop}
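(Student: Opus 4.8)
The plan is to follow verbatim the strategy of Propositions~\ref{prop:_iso_E6} and \ref{prop:_iso_E7}: exhibit an explicit \emph{corrected} $S'$-algebra homomorphism $\phi : S'\overline{E_8} \rightarrow \Pi_{S'}(E_8)$ that is the identity on all vertices and on most arrows, and that modifies the remaining arrows (guided by the $E_7$ case, one expects $b,b^{*},c,c^{*},e,e^{*}$, and possibly $f,f^{*}$) by adding to each arrow a polynomial in the central cycles $\alpha = a^{*}a$ and $\gamma = c^{*}c$ multiplied by that arrow. One then checks that $\phi$ descends to $\Lambda_{S'}(E_8)$ and that it is an isomorphism.

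To produce the map, I would use the recipe of Section~\ref{sec:_construct_isos}: parametrize corrected functions $\overline{E_8}_1 \rightarrow \Pi_{S'}(E_8)$ by the affine space $\mathbb{A}^{354}_{S'}$, so that ``$\phi$ descends'' becomes the vanishing of $\phi(r_{\text{mult}})$, a system of $178$ equations. Writing the central relation $e_3 r_{\text{mult}}$ in the form $0 = \alpha + \beta + \gamma + \gamma\beta$, each equation is at most quartic and has a usable linear term; performing the resulting substitutions (and exploiting, as in type $E_7$, that most of the $354$ coordinates can be taken to be zero) reduces the system to a small one that can be solved by hand. Inverting $2$, $3$, and $5$ is precisely what is needed to carry out the substitutions; that no further prime is required is forced by the fact that the relevant deformation class in $\text{HH}^2(\mathcal{G}^{\text{add}}_{E_8})$ vanishes over $S'$ (Lekili--Ueda, cf.\ the discussion after Theorem~\ref{thm1.3}), so a solution with denominators only among $\{2,3,5\}$ must exist.

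Once a candidate $\phi$ is written down, the claim that it descends to a homomorphism $\Lambda_{S'}(E_8) \rightarrow \Pi_{S'}(E_8)$ amounts to $\phi(r_{\text{mult}}) \in (r_{\text{add}})$, which I would verify by the Magma computation in the appendix (this is one of the four computer-assisted steps noted in the introduction). Finally, since $\phi$ is corrected it sends each arrow, and hence each path, to itself plus strictly longer paths, so the associated-graded map $\text{gr}(\phi)$ is the identity on $\Pi_{S'}(E_8)$; the argument at the end of the proof of Proposition~\ref{prop:_iso_Dn} then applies verbatim -- the projection $S'\overline{E_8} \rightarrow \text{gr}(\Lambda_{S'}(E_8))$ factors through $\Pi_{S'}(E_8)$ because $\text{gr}(r_{\text{mult}}) = r_{\text{add}}$, and $\text{gr}(\phi)\circ\pi = \mathrm{id}$, whence $\phi$ is bijective.

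The main obstacle is purely computational: actually finding the coefficients of $\phi$. The bookkeeping is far heavier than in type $E_6$ or $E_7$ -- $354$ variables, $178$ quartic relations, and a longest arm of length $6$ so that powers of $\gamma$ up to $\gamma^{7}$ survive in $\Pi_{S'}(E_8)$ -- and one must simultaneously produce a solution and certify that its denominators involve no prime beyond $2$, $3$, $5$. Everything downstream of that (the descent check and the associated-graded isomorphism criterion) is routine and identical to the lower-rank cases.
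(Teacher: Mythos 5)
Your strategy is exactly the paper's: write down an explicit corrected map $\overline{E_8}_1 \to \Pi_{S'}(E_8)$, verify $\varphi(r_{\text{mult}}) \in (r_{\text{add}})$ by the Magma computation in the appendix, and conclude bijectivity by the associated-graded argument from the proof of Proposition~\ref{prop:_iso_Dn}. Those last two steps are correctly identified and are indeed routine.

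The gap is that the entire substance of the paper's proof -- the explicit homomorphism -- is never produced. What you offer in its place is a procedure for finding it together with an existence heuristic (``a solution with denominators only among $\{2,3,5\}$ must exist because the relevant class in $\text{HH}^2(\mathcal{G}^{\text{add}}_{E_8})$ vanishes over $S'$''). As written that heuristic does not close the gap: the Lekili--Ueda computation gives vanishing over \emph{fields} of good characteristic, hence existence of an isomorphism over each such field, and upgrading this to an isomorphism over the ring $S'$ requires either the deformation-theoretic argument over $S'$ itself together with the truncation/strong-convergence step of Etg\"u--Lekili (as in the proof of Theorem~\ref{thm:_dg_main}), or the explicit map. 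Since your stated plan is the constructive route, the proof is incomplete until the coefficients are actually exhibited (in the paper's solution, incidentally, every arrow except $g, g^{*}$ is modified -- including $a, a^{*}, d, d^{*}$ -- not just the ones you anticipate), and the Magma verification can only be run against a concrete candidate.
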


\begin{proof}
To condense notation write $C := \alpha \gamma $. Define
$\varphi : \overline{E_{8}} \rightarrow \Pi _{S'}(E_{8})$ to be the identity
on vertices and the arrows $g, g^{*}$ and sending
\begin{align*}
a \mapsto p &:= 53383/93312000a\gamma C^{6} + 461/1036800a\gamma C^{2}
\gamma C\alpha + 199/129600a\gamma C^{3}\alpha + 1/4320a\gamma C^{2}
\gamma \alpha + a
\\
a^{*} \mapsto q &:= -53383/93312000\gamma C^{6}a^{*} - 26639/24883200C^{6}a^{*}
+ 259/691200C^{5}\gamma a^{*} + 1/2073600C^{5}a^{*}
\\
&+ 91/518400C^{4}\gamma a^{*} -1/3456C^{2}\gamma Ca^{*} + 7/17280C^{3}
\gamma a^{*} + 11/2880C^{3}a^{*} - 1/720C\gamma Ca^{*}
\\
&+ 1/720C^{2}\gamma a^{*} +1/144C^{2}a^{*} + 1/12Ca^{*} + a^{*}
\end{align*}
\begin{align*}
b \mapsto r &:= 773/9331200bC^{6}\alpha - 307/2488320bC^{5}\alpha - 17/172800bC^{3}
\gamma C\alpha + 2261/3110400bC^{4}\gamma \alpha
\\
&+ 749/3110400bC^{5} + 301/1036800b\gamma C^{2}\gamma C\alpha + 371/518400bC^{4}
\alpha - 1/17280b\gamma C^{2}\gamma C
\\
&+ 17/103680bC^{2}\gamma C\alpha - 37/57600bC^{3}\gamma \alpha - 13/28800bC^{4}
+ 11/17280b\gamma C^{2}\gamma \alpha + 7/17280bC^{3}\alpha
\\
&+ 1/1728bC\gamma C\alpha + 1/4320bC^{2}\gamma \alpha - 5/1728bC^{3} +
1/144b\gamma C\alpha + 1/360bC^{2} + 1/12b\gamma \alpha
\\*
&- 1/12bC - 1/2b\alpha + b
\\
b^{*} \mapsto s &:=1301/15552000C^{5}\gamma \alpha b^{*} + 2431/6220800
\gamma C^{4}\gamma \alpha b^{*} - 2507/3110400C^{4}\gamma \alpha b^{*}
- 53/518400\gamma C^{4}b^{*}
\\
&- 203/518400C^{4}\alpha b^{*} - 19/172800\gamma C^{3}\alpha b^{*} + 23/51840C^{3}
\gamma \alpha b^{*} + 199/259200C^{4} b^{*}
\\
&- 1/2160\gamma C^{3}\gamma \alpha b^{*} + 1/17280\gamma C^{3}b^{*} - 1/2880C^{2}
\gamma Cb^{*} + 1/1080C^{3}\alpha b^{*} + 1/216C^{3}b^{*}
\\
&+ 1/720C\gamma Cb^{*} - 1/720C^{2}\alpha b^{*} - 1/240\gamma ^{2}Cb^{*}
- 1/144C\gamma \alpha b^{*} + 1/24\gamma Cb^{*} - 1/12\gamma \alpha b^{*}
\\
&- 1/6Cb^{*} - 1/2\gamma b^{*} + b^{*}
\end{align*}
\begin{align*}
c \mapsto t &:= 51683/248832000cC^{6}\alpha - 162703/373248000cC^{5}
\gamma \alpha - 77/1036800c\gamma C^{4}\gamma - 1/77760cC^{3}\gamma C
\alpha
\\
&+ 2447/3110400cC^{4}\gamma \alpha + 37/124416cC^{5} + 17/1036800cC^{4}
\gamma + 203/259200cC^{4}\alpha
\\
&+ 23/86400c\gamma C^{3}\alpha - 73/259200cC^{2}\gamma C\alpha - 1/1350cC^{3}
\gamma \alpha - 463/518400cC^{4} + 7/8640c\gamma C^{3}
\\
&+ 7/17280cC^{2}\gamma C - 7/17280cC^{3}\alpha + 1/4320c(\gamma C)^{2}
- 1/2160c\gamma C^{2}\alpha - 1/2160cC\gamma C\alpha
\\
&+ 1/1080cC^{2}\gamma \alpha - 1/270cC^{3} + 1/720c\gamma C\gamma
\alpha + 1/720cC\gamma C + 1/240c\gamma C\gamma + 1/360c\gamma C
\alpha
\\
&+ 1/720cC\gamma \alpha + c
\\
c^{*} \mapsto u &:= 7853/18662400\gamma C^{5}\alpha c^{*} - 1157/3110400
\gamma C^{5}c^{*} + 5833/12441600C^{5}\gamma c^{*} - 59/12441600C^{5}
\alpha c^{*}
\\
&- 97/1036800\gamma C^{2}\gamma C\alpha c^{*} - 37/518400\gamma C^{4}c^{*}
- 17/16200C^{4} \alpha c^{*} - 49/259200\gamma C^{3}\alpha c^{*}
\\
&+ 7/34560C^{2}\gamma C\alpha c^{*} - 7/34560C^{3}\gamma \alpha c^{*} -
7/17280\gamma C^{2}\gamma \alpha c^{*} - 7/8640\gamma C^{3}c^{*} + 47/17280C^{3}
\gamma c^{*}
\\
&- 1/432C^{2}\gamma \alpha c^{*} + 1/432C^{3}c^{*} + 1/1440\gamma C
\gamma \alpha c^{*} + 1/1440\gamma C^{2}c^{*} + 1/480C^{2}\gamma c^{*}
- 1/240\gamma C\gamma c^{*}
\\
&- 1/360\gamma C\alpha c^{*} - 1/720C\gamma \alpha c^{*} - 1/24
\gamma ^{2}\alpha c^{*} - 1/24\gamma Cc^{*} - 1/24C\gamma c^{*} - 1/24C
\alpha c^{*} + 1/6\gamma ^{2}c^{*}
\\
&+ 1/2\gamma c^{*} + c^{*}
\end{align*}
\begin{align*}
d \mapsto v &:= -1/9953280b^{*}C^{5}\gamma \alpha b^{*} + 7/17280b^{*}C^{2}
\gamma Cb^{*} - 1/5760b^{*}C^{2}\gamma \alpha b^{*} + d
\\
d^{*} \mapsto w &:= 1/5760bC^{2}\gamma \alpha b^{*}d^{*} + 1/1728bC^{2}
\alpha b^{*}d^{*} + d^{*}
\\
e \mapsto x &:= 13/2073600ecC^{4}\alpha c^{*} + 13/345600ecC^{2}
\gamma C\alpha c^{*} + 13/172800ec(\gamma C)^{2}\alpha c^{*}
\\
&+ 19/172800ecC^{3}\gamma c^{*} + e
\\
e^{*} \mapsto y &:= -19/172800cC^{3}\gamma c^{*}e^{*} + 13/172800cC^{3}
\alpha c^{*}e^{*} + 1/6c\gamma c^{*}e^{*} + 1/2cc^{*}e^{*} + e^{*}
\\
f \mapsto z &:= 13/172800fecC\gamma C\alpha c^{*}e^{*} + f
\\
f^{*} \mapsto h &:= 13/345600ecC^{3}\alpha c^{*}e^{*}f^{*} - 13/172800ecC
\gamma C\alpha c^{*}e^{*}f^{*} + 1/2ee^{*}f^{*} + f^{*}
\end{align*}
One can check these denominators are indeed multiplies of only the primes
2, 3, and 5. This map descends to $\Lambda _{S'}(E_{8})$, since
$\varphi (r_{\text{mult}}) \in (r_{\text{add}})$, as shown in the appendix.
The descended map is an isomorphism as explained in the proof of Proposition~\ref{prop:_iso_Dn}.
\end{proof}

%c5.8 #&#
\begin{cor}
\label{cor5.8}
Let $R$ be a commutative ring. Then
$\Lambda _{R}(E_{8}) \cong \Pi _{R}(E_{8})$ if and only if
$2, 3, 5 \in R^{\times}$.
\end{cor}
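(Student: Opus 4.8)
The plan is to reproduce, in the $E_8$ case, the structure of the proofs of Corollaries~\ref{cor5.3} and \ref{cor5.6}: one implication comes from the explicit isomorphism over $S' = \mathbb{Z}[1/2,1/3,1/5]$ constructed in Proposition~\ref{prop:_iso_E8}, and the other from the zeroth Hochschild homology obstruction recorded in Corollary~\ref{cor:_no_iso_E67}. No genuinely new computation should be needed; the only work is transporting these two facts to an arbitrary base ring $R$.

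First I would handle the direction ($\Leftarrow$). Assuming $2, 3, 5 \in R^{\times}$, the unique unital ring map $\mathbb{Z} \to R$ factors through $S'$, giving a ring homomorphism $S' \to R$. Since $E_8$ is star-shaped, Proposition~\ref{prop:_partial_equal} exhibits $\Lambda_{S'}(E_8)$ as the quotient $S'\overline{E_8}/((1-e_3)r_{\text{mult}})$ with no localization needed, so both $\Lambda(E_8)$ and $\Pi(E_8)$ are obtained from path-algebra presentations by base change. Applying $-\otimes_{S'} R$ to the map $\varphi\colon S'\overline{E_8} \to \Pi_{S'}(E_8)$ of Proposition~\ref{prop:_iso_E8} produces an $R$-algebra map $R\overline{E_8} \to \Pi_R(E_8)$; because $\varphi(r_{\text{mult}}) \in (r_{\text{add}})$ over $S'$ (the appendix verification), the same containment holds after base change, so the map descends to a map $\Lambda_R(E_8) \to \Pi_R(E_8)$. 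Each arrow is still sent to itself plus higher-length paths, so the associated-graded argument from the proof of Proposition~\ref{prop:_iso_Dn} — which uses only that the ground ring is commutative, never that it is a field — shows the descended map is an isomorphism.

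For the converse ($\Rightarrow$), I would argue by contraposition: if $2 \notin R^{\times}$ or $3 \notin R^{\times}$ then Corollary~\ref{cor:_no_iso_E67} already gives $\Lambda_R(E_8) \ncong \Pi_R(E_8)$, and the same corollary gives non-isomorphism when $5 \notin R^{\times}$; hence any isomorphism $\Lambda_R(E_8) \cong \Pi_R(E_8)$ forces $2, 3, 5 \in R^{\times}$. The main — and essentially the only — subtlety is the base-change bookkeeping in the previous paragraph: one must note that $\Pi_R(E_8) = \Pi_{S'}(E_8)\otimes_{S'}R$ and $\Lambda_R(E_8) = \Lambda_{S'}(E_8)\otimes_{S'}R$ as presented quotients, and that the invertibility argument of Proposition~\ref{prop:_iso_Dn} is insensitive to the ground ring. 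Both are routine, so the substantive content of the corollary lies entirely in Proposition~\ref{prop:_iso_E8} (the difficult construction) and Corollary~\ref{cor:_no_iso_E67} (the Hochschild homology obstruction).
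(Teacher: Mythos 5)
Your proposal is correct and follows the paper's own proof exactly: the forward implication is Corollary~\ref{cor:_no_iso_E67}, and the reverse implication transports the isomorphism of Proposition~\ref{prop:_iso_E8} along the map $\mathbb{Z}[1/2,1/3,1/5]\to R$ obtained from the invertibility of $2,3,5$. The extra base-change bookkeeping you supply (via Proposition~\ref{prop:_partial_equal} and the associated-graded argument of Proposition~\ref{prop:_iso_Dn}) is a more careful spelling-out of what the paper compresses into ``one can regard the coefficients as living in $R$,'' not a different route.
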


\begin{proof}
If $2, 3, 5 \in R^{\times}$, then one can regard the coefficients of
$\varphi $ in Proposition~\ref{prop:_iso_E8} as living in $R$, giving the
isomorphisms. Corollary~\ref{cor:_no_iso_E67} establishes the converse.
\end{proof}

This completes the proof of the main theorem in the more general setting
of commutative rings.

%s6 #&#
\section{Applications}
%%LEAP%%%\label{sec6}
\label{sec:_applications}

%s6.1 #&#
\subsection{Multiplicative Ginzburg DG algebras}
%%LEAP%%%\label{sec6.1}
\label{ss:_dg_results}

The multiplicative Ginzburg dg-algebra is a differential graded algebra
whose zeroth homology is the multiplicative preprojective algebra. Etg\"{u}--Lekili
showed that this algebra naturally arises in 4-dimensional symplectic geometry;
its category of dg-modules is equivalent to the wrapped Fukaya category
of a plumbing of cotangent bundles of 2-spheres. Their equivalence factors
through the Chekanov-Eliashberg dg-algebra associated to the Legendrian
knot whose filling gives the generating (non-compact) Lagrangians. See
Section 2 of Etg\"{u}--Lekili \cite{EL17} for more details. In this section,
we apply our main theorem to the dg-setting.

We work over a field $k$, but all statements hold over a commutative ring
$R$ with the caveat that the condition $\text{char}(k)$ is good be replaced
with the condition all bad primes are invertible in $R$.

%d6.1 #&#
\begin{defn}
\label{defn6.1}
The \emph{Ginzburg dg-algebra} (with zero potential) for the quiver
$Q$, denoted $\mathcal{G}_{Q}^{\text{add}}$, is the free algebra
$k \overline{Q} *_{kQ_{0}} kQ_{0}[ s_{\text{add}}]$ with
$s_{\text{add}}$ a formal variable. The grading is given by
$|\overline{Q}|=0$ and $|s_{\text{add}}| = -1$. The differential $d$ is
a degree $1$ derivation defined on generators by
$d \mid _{\overline{Q}} \equiv 0$ and
$d(s_{\text{add}}) = r_{\text{add}}$. The
\emph{multiplicative Ginzburg dg-algebra} for $Q$, denoted
$\mathcal{G}_{Q}^{\text{mult}}$, is the localized free algebra
$k \overline{Q}[(1+a^{*}a)^{-1}]_{a \in Q_{1}} *_{kQ_{0}} kQ_{0}[ s_{
\text{mult}}]$ with grading $| \overline{Q} | =0$ and
$|s_{\text{mult}}| =-1$ and differential determined by
$d \mid _{\overline{Q}} \equiv 0$ and
$d(s_{\text{mult}}) = r_{\text{mult}}$.
\end{defn}

\begin{rem}
The multiplicative Ginzburg dg-algebra does \emph{not} agree with the derived multiplicative preprojective algebra of Etg\"{u}--Lekili \cite[page 779]{EL17}, but as explained in \cite[Remark 4.7]{KS20} these dg-algebras are quasi-isomorphic, which is sufficient for our results.
\end{rem}

%r6.2 #&#
\begin{rem}
\label{rem6.2}
Since $Q$ is star-shaped, $a^{*}a$ is nilpotent on homology in both the
Ginzburg and multiplicative Ginzburg dg-algebras, by Corollary~\ref{cor:_inverses}. Hence we can also invert $(1+a^{*}a)$ in
$\mathcal{G}_{Q}^{\text{add}}$ without changing its quasi-isomorphism class.
Together with the fact that
$r_{\text{mult}} = r_{\text{add}} + \text{(higher order terms)}$, one can
regard $\mathcal{G}_{Q}^{\text{mult}}$ as a filtered deformation of
$\mathcal{G}_{Q}^{\text{add}}$ using the descending path length filtration:
\begin{equation*}
\mathcal{F}_{\text{mult}}^{\bullet} := \quad \mathcal{F}_{\text{mult}}^{
\geq 0} :=\mathcal{G}_{Q}^{\text{mult}} \supset \cdots \supset
\mathcal{F}_{\text{mult}}^{\geq \ell} := (k \overline{Q})^{\geq \ell} *_{kQ_{0}}
kQ_{0}[ s_{\text{mult}}] \supset \cdots
\end{equation*}
where $k \overline{Q}^{\geq \ell}$ is the $k$-linear span of paths of length
at least $\ell $. Using the Rees construction, one can view this filtered
deformation as a formal deformation that lives in positive degree in
$\text{HH}^{2}(\mathcal{G}_{Q}^{\text{add}})$.
\end{rem}

%t6.3 #&#
\begin{thm}
%%LEAP%%%\label{thm6.3}
\label{thm:_dg_main}
Let $Q$ be an ADE Dynkin quiver. Then
$\mathcal{G}_{Q}^{\text{mult}} \cong \mathcal{G}_{Q}^{\text{add}}$ are quasi-isomorphic
if and only if $\text{char}(k)$ is good for $Q$.
\end{thm}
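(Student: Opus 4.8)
The plan is to prove the two implications separately: the ``only if'' direction will reduce immediately to Theorem~\ref{thm:_main}, while the ``if'' direction will be imported from the Hochschild cohomology computation of Lekili--Ueda through the deformation-theoretic picture of Remark~\ref{rem6.2}.

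For the ``only if'' direction I would argue as follows. Suppose $\text{char}(k)$ is bad for $Q$. Any zigzag of quasi-isomorphisms of dg-algebras induces an isomorphism of graded cohomology rings, in particular a ring isomorphism on zeroth cohomology. Since $s_{\text{mult}}$ and $s_{\text{add}}$ lie in degree $-1$ with $d(s_{\text{mult}}) = r_{\text{mult}}$ and $d(s_{\text{add}}) = r_{\text{add}}$, one has $H^{0}(\mathcal{G}^{\text{mult}}_{Q}) = \Lambda_{k}(Q)$ and $H^{0}(\mathcal{G}^{\text{add}}_{Q}) = \Pi_{k}(Q)$, so a quasi-isomorphism would produce a $k$-algebra isomorphism $\Lambda_{k}(Q) \cong \Pi_{k}(Q)$, contradicting Theorem~\ref{thm:_main}. (Equivalently, such an isomorphism would descend to $\Lambda_{k}(Q)_{\cyc} \cong \Pi_{k}(Q)_{\cyc}$, which fails in bad characteristic since $\Lambda_{k}(Q)_{\cyc} = kQ_{0} \ne \Pi_{k}(Q)_{\cyc}$ by Section~\ref{sec:_HH_0}.)

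For the ``if'' direction, assume $\text{char}(k)$ is good for $Q$; here I would run the deformation-theoretic argument set up in Remark~\ref{rem6.2}. First invert $(1+a^{*}a)$ in $\mathcal{G}^{\text{add}}_{Q}$, which is harmless for the quasi-isomorphism type because each $a^{*}a$ is nilpotent on homology by Corollary~\ref{cor:_inverses}; the descending path-length filtration then realizes $\mathcal{G}^{\text{mult}}_{Q}$ as a filtered deformation of $\mathcal{G}^{\text{add}}_{Q}$, and via the Rees construction as a one-parameter formal deformation whose class --- together with all higher obstructions to its triviality --- lies in the strictly positive internal-degree (path-length) part of $\HH^{2}(\mathcal{G}^{\text{add}}_{Q})$. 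Lekili--Ueda \cite[Section 5]{LU21} computed $\HH^{\bullet}(\mathcal{G}^{\text{add}}_{Q})$ for $Q$ ADE Dynkin and showed this positive-degree part of $\HH^{2}$ vanishes exactly when $\text{char}(k)$ is good for $Q$. Hence all obstructions vanish, the deformation is gauge-trivial, and (after the routine check that the trivializing gauge transformation converges and specializes at both endpoints of the deformation parameter, which is immediate because the path-length filtration has finite-dimensional graded pieces for a Dynkin quiver) one concludes $\mathcal{G}^{\text{mult}}_{Q} \cong \mathcal{G}^{\text{add}}_{Q}$.

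The arithmetic content is thus entirely carried by Theorem~\ref{thm:_main} (for ``only if'') and by \cite{LU21} (for ``if''); the only point that will require care is making the deformation-theoretic dictionary precise, namely that the filtered deformation of Remark~\ref{rem6.2} is classified, with its full obstruction theory, by the positive-weight summand of $\HH^{2}(\mathcal{G}^{\text{add}}_{Q})$, so that Lekili--Ueda's graded computation applies verbatim. I would also record, as Section~\ref{ss:_dg_results} promises, the translation of the $\HH_{0}$ computations of Section~\ref{sec:_HH_0} into $\HH^{2}$ statements via Van den Bergh duality, which reinterprets the obstruction and makes the comparison with \cite{LU21} transparent.
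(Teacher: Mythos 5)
Your proposal is correct and follows the same architecture as the paper's proof: the ``only if'' direction is exactly the paper's (pass to zeroth homology and invoke the non-isomorphism $\Lambda_k(Q) \ncong \Pi_k(Q)$ in bad characteristic), and the ``if'' direction runs the same deformation-theoretic argument via Remark~\ref{rem6.2}. The one substantive difference is where the vanishing of the positive-weight part of $\HH^{2}(\mathcal{G}_{Q}^{\text{add}})$ comes from: you import it from Lekili--Ueda \cite{LU21}, whereas the paper derives it internally via Van den Bergh duality, $\HH^{2}(\mathcal{G}_{Q}^{\text{add}}) \cong \HH_{0}(\Pi(Q))$, combined with either Schedler's computation or the paper's own isomorphism $\HH_0(\Pi(Q)) \cong \HH_0(\Lambda(Q)) \cong kQ_0$ in good characteristic. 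Both routes are valid (the paper explicitly acknowledges both in the paragraph preceding its proof); the paper's choice keeps the argument closer to self-contained and makes the duality dictionary you mention at the end an actual ingredient rather than an afterthought.

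One caution: the step you describe as a ``routine check that the trivializing gauge transformation converges and specializes at both endpoints'' is precisely the content of Etg\"{u}--Lekili's argument, which the paper spells out as three steps --- truncating the completed quasi-isomorphism to an isomorphism of uncompleted algebras, correcting it to a chain map, and invoking strong convergence of the spectral sequences attached to the path-length filtrations (\cite[Theorem 13, Lemma 12]{EL17}). You have identified the right reason this works (finite-dimensionality of $\Lambda(Q)$ for Dynkin $Q$, which is what lets \cite[Lemma 12]{EL17} extend from type D to type E), but in a complete write-up this passage from formal triviality of the deformation to a genuine quasi-isomorphism of the uncompleted dg-algebras should be attributed and not dismissed as immediate.
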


In type D this result is \cite[Theorem 13]{EL17}. In type E the quasi-isomorphism
in good characteristic follows from the vanishing of positive degree classes
in $\text{HH}^{2}(\mathcal{G}_{Q}^{\text{add}})$ proven in
\cite[Section 5]{LU21}, which we explain also follows from our results
or \cite[Theorem 13.1.1]{Schedler16}. The obstruction in bad characteristic
is new in type E for $\text{char}(k) = 3, 5$.

\begin{proof}
Suppose there exists a quasi-isomorphism
$\mathcal{G}_{Q}^{\text{mult}} \cong \mathcal{G}_{Q}^{\text{add}}$. This
map identifies their zeroth homology
\begin{equation*}
\Lambda (Q) = H_{0}(\mathcal{G}_{Q}^{\text{mult}}) \cong H_{0}(
\mathcal{G}_{Q}^{\text{add}}) = \Pi (Q)
\end{equation*}
which by Corollary~\ref{cor:_no_iso_Dn} and Corollary~\ref{cor:_no_iso_E67} is only possible if $\text{char}(k)$ is good for
$Q$.

Conversely, the Ginzburg dg-algebra is 2-Calabi--Yau and concentrated in
non-positive degrees. Hence by Van den Bergh duality
\cite[Theorem 1]{VdB_duality},
\begin{equation*}
\text{HH}^{2}(\mathcal{G}_{Q}^{\text{add}} ) \cong \text{HH}_{0}(
\mathcal{G}_{Q}^{\text{add}} ) = \text{HH}_{0}( \Pi (Q))
\end{equation*}
which in good characteristic vanishes in positive degree either by
\cite[Theorem 13.1.1]{Schedler16} or indirectly by our isomorphisms
\begin{equation*}
\text{HH}_{0}( \Pi (Q)) \cong \text{HH}_{0}( \Lambda (Q)) \cong kQ_{0}.
\end{equation*}
We conclude that $\mathcal{G}_{Q}^{\text{mult}}$ is trivial as a
\emph{formal} deformation of $\mathcal{G}_{Q}^{\text{add}}$. Therefore, one
obtains a quasi-isomorphism \emph{after completing}
$\mathcal{G}_{Q}^{\text{mult}}$ and $\mathcal{G}_{Q}^{\text{add}}$ with respect
to the descending path-length filtrations
\begin{equation*}
\mathcal{F}_{\text{mult}}^{\geq \ell} := (k \overline{Q})^{\geq \ell} *_{kQ_{0}}
kQ_{0}[ s_{\text{mult}}] \quad \text{and} \quad \mathcal{F}_{\text{add}}^{
\geq \ell} := (k \overline{Q})^{\geq \ell} *_{kQ_{0}} kQ_{0}[ s_{
\text{add}}].
\end{equation*}
To prove the existence of a quasi-isomorphism
$\mathcal{G}_{Q}^{\text{mult}} \cong \mathcal{G}_{Q}^{\text{add}}$, Etg\"{u}--Lekili:
(I) truncate the completed quasi-isomorphism to an algebra isomorphism
of uncompleted algebras, (II) correct the truncation to a chain map, and
(III) use the strong convergence of the spectral sequences associated to
the filtrations \cite[Theorem 13]{EL17}. Note that
\cite[Lemma 12]{EL17} establishes the strong convergence of the spectral
sequence associated to $\mathcal{F}_{\text{mult}}^{\bullet}$. This lemma
is stated in type D, but their proof holds in type E essentially using
the fact that $\Lambda (Q)$ is finite-dimensional in this case.
\end{proof}

We want to emphasize that the key ideas are already in Etg\"{u}--Lekili.
But a central question, the triviality of certain deformation classes of
the Ginzburg dg-algebra, is detectable using the zeroth Hochschild homology
of the preprojective algebra and the multiplicative preprojective algebra.
Namely we have the following:

%c6.4 #&#
\begin{cor}
\label{cor6.4}
$\text{HH}_{0}(\Pi (Q)) = \text{HH}_{0}(\Lambda (Q))$ if and only if
$\mathcal{G}_{Q}^{\text{mult}}$ is a trivial deformation of
$\mathcal{G}_{Q}^{\text{add}}$ as a dg-algebra.
\end{cor}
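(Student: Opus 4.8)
The plan is to deduce this directly from Theorem~\ref{thm:_dg_main} together with the Hochschild-homology computations of Section~\ref{sec:_HH_0} and the Van den Bergh duality identification used in the proof of that theorem. The key reduction is that $\HH_0(\Lambda(Q)) = kQ_0$ for \emph{every} ADE Dynkin $Q$, independently of $\text{char}(k)$ — this is Corollary~\ref{cor:_HH0_Dn} in type D and Corollary~\ref{cor:HH_0_Em_R} in type E — so the equality $\HH_0(\Pi(Q)) = \HH_0(\Lambda(Q))$ is equivalent to $\HH_0(\Pi(Q))$ being concentrated in degree zero, i.e. $\HH_0(\Pi(Q)) = kQ_0$. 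By Schedler \cite[Theorem 13.1.1]{Schedler16} (equivalently Remark~\ref{rem:_linearly_indep}), the latter holds precisely when $\text{char}(k)$ is good for $Q$. Thus the corollary becomes the assertion that $\mathcal{G}_Q^{\text{mult}}$ is a trivial deformation of $\mathcal{G}_Q^{\text{add}}$ if and only if $\text{char}(k)$ is good for $Q$, which is Theorem~\ref{thm:_dg_main} unwound.

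For the forward implication I would argue as follows. Assume $\HH_0(\Pi(Q)) = \HH_0(\Lambda(Q))$, so $\HH_0(\Pi(Q)) = kQ_0$ has no positive-degree part. By Remark~\ref{rem6.2}, the descending path-length filtration exhibits $\mathcal{G}_Q^{\text{mult}}$ (via the Rees construction) as a formal deformation of $\mathcal{G}_Q^{\text{add}}$ whose class lies in the positive-degree part of $\HH^2(\mathcal{G}_Q^{\text{add}})$. Since $\mathcal{G}_Q^{\text{add}}$ is $2$-Calabi--Yau and concentrated in non-positive degrees, Van den Bergh duality \cite[Theorem 1]{VdB_duality} gives $\HH^2(\mathcal{G}_Q^{\text{add}}) \cong \HH_0(\mathcal{G}_Q^{\text{add}}) = \HH_0(\Pi(Q))$, which is concentrated in degree zero. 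Hence the deformation class vanishes in positive degree and the deformation is trivial; as in the proof of Theorem~\ref{thm:_dg_main}, this triviality propagates through the completed algebras — using strong convergence of the spectral sequences associated to the two filtrations together with Etg\"{u}--Lekili's truncation/correction argument — to an honest quasi-isomorphism $\mathcal{G}_Q^{\text{mult}} \cong \mathcal{G}_Q^{\text{add}}$.

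For the converse, a trivial deformation is in particular quasi-isomorphic to $\mathcal{G}_Q^{\text{add}}$ as a dg-algebra, so taking zeroth homology yields an algebra isomorphism $\Lambda(Q) = H_0(\mathcal{G}_Q^{\text{mult}}) \cong H_0(\mathcal{G}_Q^{\text{add}}) = \Pi(Q)$, and applying the functor $\HH_0(-)$ gives $\HH_0(\Lambda(Q)) \cong \HH_0(\Pi(Q))$. I expect the only delicate point to be fixing the precise meaning of ``trivial deformation'' and checking that vanishing of the class in $\HH^2$ — an a priori completed statement — does yield a quasi-isomorphism of the \emph{uncompleted} Ginzburg dg-algebras; but this is exactly the chain of implications already carried out in the proof of Theorem~\ref{thm:_dg_main}, so the present corollary requires no new work beyond recording that $\HH_0(\Lambda(Q)) = kQ_0$ in all characteristics.
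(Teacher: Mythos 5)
Your argument is correct and follows exactly the route the paper intends: the corollary is stated without a separate proof because it is read off from the proof of Theorem~\ref{thm:_dg_main}, using precisely the reduction you make — $\HH_0(\Lambda(Q)) = kQ_0$ in all characteristics (Corollaries~\ref{cor:HH0_An_R}, \ref{cor:_HH0_Dn}, \ref{cor:HH_0_Em_R}), so both sides of the equivalence are detected by the vanishing of the positive-degree part of $\HH_0(\Pi(Q)) \cong \HH^2(\mathcal{G}_Q^{\text{add}})$ via Van den Bergh duality. No gap; your caveat about the meaning of ``trivial deformation'' is handled at the same level of detail as in the paper's proof of Theorem~\ref{thm:_dg_main}.
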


We conclude this section by recording our zeroth Hochschild homology calculation
in the dg-setting.

%p6.5 #&#
\begin{prop}
\label{prop6.5}
For $Q$ ADE Dynkin, the space of formal deformations of
$\mathcal{G}_{Q}^{\text{mult}}$ as a dg-algebra is $kQ_{0}$.
\end{prop}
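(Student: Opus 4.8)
The plan is to identify the space of formal deformations of $\mathcal{G}_{Q}^{\text{mult}}$ with its second Hochschild cohomology, and then to compute that group by Van den Bergh duality together with the zeroth Hochschild homology calculations of Section~\ref{sec:_HH_0}. Recall that infinitesimal deformations of a dg-algebra $A$ over $kQ_{0}$ are classified by $\text{HH}^{2}(A) = \text{Ext}^{2}_{A^{e}}(A,A)$, with obstructions to integrating a first-order deformation lying in $\text{HH}^{3}(A)$; so it suffices to establish that $\text{HH}^{2}(\mathcal{G}_{Q}^{\text{mult}}) \cong kQ_{0}$ and that every such class integrates.

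For the computation, first note that $\mathcal{G}_{Q}^{\text{mult}}$ is homologically smooth and $2$-Calabi--Yau -- either by \cite{KS20}, or because by Remark~\ref{rem6.2} it is a filtered deformation of the $2$-Calabi--Yau algebra $\mathcal{G}_{Q}^{\text{add}}$, and both properties are inherited by such deformations -- and that it is concentrated in non-positive degrees. Van den Bergh duality \cite[Theorem 1]{VdB_duality} then gives $\text{HH}^{2}(\mathcal{G}_{Q}^{\text{mult}}) \cong \text{HH}_{0}(\mathcal{G}_{Q}^{\text{mult}})$ (and $\text{HH}^{3}(\mathcal{G}_{Q}^{\text{mult}}) \cong \text{HH}_{-1}(\mathcal{G}_{Q}^{\text{mult}})$). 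Since $\mathcal{G}_{Q}^{\text{mult}}$ is connective with $H_{0} = \Lambda(Q)$, and $\pi_{0}$ of a derived tensor product of connective modules over a connective algebra is the ordinary tensor product of the $\pi_{0}$'s, one has $\text{HH}_{0}(\mathcal{G}_{Q}^{\text{mult}}) = \Lambda(Q) \otimes_{\Lambda(Q)^{e}} \Lambda(Q) = \text{HH}_{0}(\Lambda(Q)) = \Lambda(Q)_{\text{cyc}}$, exactly as in the proof of Theorem~\ref{thm:_dg_main} for the additive case. By Proposition~\ref{prop:_partial_equal} in type A and Propositions~\ref{prop:_HH_0_D_n} and \ref{prop:_HH_0_Em} in types D and E, this equals $kQ_{0}$; hence $\text{HH}^{2}(\mathcal{G}_{Q}^{\text{mult}}) \cong kQ_{0}$.

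It remains to see that this whole space of infinitesimal deformations integrates. Here I would exhibit the $|Q_{0}|$-parameter family given by the multiplicative Ginzburg dg-algebra $\mathcal{G}_{Q}^{\text{mult},q}$ with parameter $q = (q_{i})_{i \in Q_{0}}$, whose zeroth homology is $\Lambda^{q}_{k}(Q)$ and which recovers $\mathcal{G}_{Q}^{\text{mult}}$ at $q = 1$: this is an honest formal deformation over the $|Q_{0}|$-dimensional base $kQ_{0}$, and one checks that its Kodaira--Spencer map to $\text{HH}^{2}(\mathcal{G}_{Q}^{\text{mult}}) = kQ_{0}$ is an isomorphism (it suffices to check injectivity, both sides being $|Q_{0}|$-dimensional, which can be read off the linear term of the deformed relation $r_{\text{mult}}$). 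It follows that the formal deformation functor is smooth with tangent space $kQ_{0}$, realized by these deformations. The step I expect to require the most care is the very first one in the second paragraph: verifying that $\mathcal{G}_{Q}^{\text{mult}}$ is $2$-Calabi--Yau so that Van den Bergh duality applies -- either pinning down a clean reference or checking that homological smoothness and the Calabi--Yau class descend along the filtered deformation of Remark~\ref{rem6.2} -- together with keeping the Hochschild (co)homology degree conventions and the degree-$2$ Calabi--Yau shift straight; everything else reduces to the computation of $\Lambda(Q)_{\text{cyc}}$ already carried out in Section~\ref{sec:_HH_0}.
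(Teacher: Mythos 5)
Your proposal is correct and follows essentially the same route as the paper: Van den Bergh duality for the 2-Calabi--Yau, connective dg-algebra reduces everything to $\text{HH}_0(\mathcal{G}_Q^{\text{mult}}) = \text{HH}_0(\Lambda_k(Q)) = kQ_0$, which is the paper's argument verbatim (the paper justifies the 2-CY property via Etg\"u--Lekili's realization of the module category as a wrapped Fukaya category together with Ganatra's theorem, rather than your proposed alternatives). The only divergence is at the end, where the paper disposes of obstructions simply by noting $\text{HH}^3(\mathcal{G}_Q^{\text{mult}}) = 0$ --- which already follows from your own parenthetical $\text{HH}^3 \cong \text{HH}_{-1}$ plus connectivity --- so the Kodaira--Spencer argument with the $q$-parameter family, whose key verification you leave unchecked, is an unnecessary detour.
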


\begin{proof}
The dg-algebra $\mathcal{G}_{Q}^{\text{mult}}$ is 2-Calabi--Yau: Etg\"{u}--Lekili
realize its category of dg-modules as a wrapped Fukaya category of a Weinstein
4-manifold, known to be 2-Calabi--Yau by Ganatra
\cite[Theorem 1.3]{Ganatra12}. By (1) Van den Bergh duality, (2) the fact
that $\mathcal{G}_{Q}^{\text{mult}}$ is concentrated in non-positive degrees,
and (3) Propositions~\ref{prop:_HH_0_D_n} and \ref{prop:_HH_0_Em} we have
\begin{equation*}
\text{HH}^{2}(\mathcal{G}_{Q}^{\text{mult}})
\overset{\text{(1)}}{\cong} \text{HH}_{0}(\mathcal{G}_{Q}^{\text{mult}})
\overset{\text{(2)}}{=} \text{HH}_{0}( \Lambda _{k}(Q))
\overset{\text{(3)}}{=} kQ_{0}.
\end{equation*}
Since $\text{HH}^{3}(\mathcal{G}_{Q}^{\text{mult}}) =0$, these deformations
are unobstructed.
\end{proof}

%s6.2 #&#
\subsection{Symmetric Frobenius structures}
%%LEAP%%%\label{sec6.2}
\label{ss:_Frobenius}

The goal of this section is to demonstrate that in cases where the additive and multiplicative preprojective algebras are non-isomorphic, they genuinely exhibit different behavior. For instance, for $R$ a commutative ring of $\text{char}(R) = 2$, $\Pi_{R}(E_8)$ is a symmetric Frobenius $R$-algebra by projecting to top degree while $\Lambda_{R}(E_8)$ is not symmetric Frobenius. 

\begin{defn}
Let $R$ be a commutative ring. Let $A$ be an $R$-algebra that is finitely-generated and projective as an $R$-module. A Frobenius structure on $A$ is an $R$-linear map $\lambda: A \rightarrow R$ that is non-degenerate in the sense that $\lambda(a \cdot -) : A \rightarrow R$ is non-zero for all non-zero $a \in A$. A Frobenius form is \emph{symmetric} if $\lambda(ab) = \lambda(ba)$ for all $a, b \in A$. We say $A$ is (symmetric) Frobenius if there exists a (symmetric) Frobenius structure on $A$. 
\end{defn}

If $\lambda$ is a Frobenius structure then there exists a unique automorphism $\eta: A \rightarrow A$ such that $\lambda(ab) = \lambda(b\eta(a))$ for all $a, b \in A$, called the \emph{Nakayama automorphism} with respect to $\lambda$. 

\begin{rem} (Change of Frobenius structure)
 If $A$ is Frobenius then the set of Frobenius structures on $A$ has a simply transitive action by the group $A^{\times}$ of invertible elements of $A$ given by $u \cdot \lambda =  \lambda(- \cdot u)$. If $\lambda$ has Nakayama automorphism $\eta$ then $u \cdot \lambda$ has Nakayama automorphism $u \eta u^{-1}$.
%\[
%\lambda'( b \eta'(a)) = \lambda'( a b ) = \lambda( ab u ) = \lambda ( bu \eta(a) ) = \lambda( b u \eta(a) u^{-1} u) = \lambda'( b u \eta(a) u^{-1}).
%\]
%The non-degeneracy of $\lambda'$ implies $\eta' = u \eta u^{-1}$,
\end{rem}

\begin{prop} \label{prop: sym Frob mult}
$\Lambda_R(Q)$ is not symmetric Frobenius for any pair $(Q, R)$ with $Q \neq A_1$. 
\end{prop}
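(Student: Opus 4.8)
The plan is to show that any symmetric $R$-linear form $\lambda : \Lambda_R(Q) \to R$ must factor through the quotient $\Lambda_R(Q) \to \Lambda_R(Q)_{\text{cyc}} = RQ_0$, and then to exhibit a nonzero element that is annihilated by every such $\lambda(- \cdot \,)$, contradicting non-degeneracy. The first point is immediate from the definition: if $\lambda(ab) = \lambda(ba)$ for all $a, b$, then $\lambda$ vanishes on the $R$-span of commutators $[\Lambda_R(Q), \Lambda_R(Q)]$, hence descends to $\text{HH}_0(\Lambda_R(Q)) = \Lambda_R(Q)/[\Lambda_R(Q), \Lambda_R(Q)]$. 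For $Q$ star-shaped this is $RQ_0$ by Corollary~\ref{cor:_HH0_Dn} and Corollary~\ref{cor:HH_0_Em_R} (and for type $A$ it is $RA_0$ by Corollary~\ref{cor:HH0_An_R}, via Proposition~\ref{prop:_partial_equal}); so $\lambda$ is determined by its values $\lambda(e_i) \in R$ on the vertex idempotents, and in particular $\lambda$ kills every positive-length path when viewed in $\text{HH}_0$.

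Next I would use this to defeat non-degeneracy. Since $Q \neq A_1$, there is at least one arrow $a \in Q_1$, say from vertex $i$ to vertex $j$ with $i \neq j$ (the quiver has no loops). Consider the element $a \in \Lambda_R(Q)$, which is nonzero (it is part of a basis of the degree-one piece, and the defining relations lie in degree $\geq 2$). For any $b \in \Lambda_R(Q)$, the product $ab$ is a sum of cycles based appropriately; more precisely $\lambda(ab) = \lambda(ba)$ and $ba \in e_j \Lambda_R(Q) e_i$, so $ba$ lies in the positive-length part whenever it is supported off the diagonal, and is killed in $\text{HH}_0$. The one case to handle is when $ba$ has a component proportional to a vertex idempotent: since $ba \in e_j(\cdot)e_i$ and $i \neq j$, no such component exists, so $\lambda(ba) = 0$ for all $b$. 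Hence $\lambda(a \cdot -) \equiv 0$ with $a \neq 0$, so $\lambda$ is degenerate. As $\lambda$ was an arbitrary symmetric form, $\Lambda_R(Q)$ is not symmetric Frobenius.

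The main obstacle is making precise that the relevant elements are genuinely nonzero in $\Lambda_R(Q)$ and that the factorization through $RQ_0$ is valid over a general commutative ring $R$ rather than just a field — but both are already in hand: Proposition~\ref{prop:_partial_equal} gives that $\Lambda_R(Q)$ is a quotient of $R\overline{Q}$ with relations in degree $\geq 2$ (so degree-one elements such as $a$ survive), and the corollaries cited above compute $\Lambda_R(Q)_{\text{cyc}} = RQ_0$ over any commutative $R$ in all ADE types. A minor point to address cleanly is the edge case where $Q$ is type $A_n$ with $n \geq 2$, or more generally where one wants $Q$ not necessarily Dynkin; but for the statement as written ($Q \neq A_1$, with the standing assumption that $Q$ is one of the ADE quivers of this section) the computation of $\Lambda_R(Q)_{\text{cyc}}$ is exactly Corollaries~\ref{cor:HH0_An_R}, \ref{cor:_HH0_Dn}, and \ref{cor:HH_0_Em_R}, so no further work is needed.
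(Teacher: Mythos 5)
Your proposal is correct and is essentially the paper's own argument: both rest on the computation $\Lambda_R(Q)_{\text{cyc}} = RQ_0$ (Corollaries~\ref{cor:HH0_An_R}, \ref{cor:_HH0_Dn}, \ref{cor:HH_0_Em_R}), which shows that a symmetric form must vanish on the two-sided ideal $\Lambda^+_R(Q)$ generated by the arrows, and this contradicts non-degeneracy because $a\,\Lambda_R(Q) \subseteq \Lambda^+_R(Q)$ for a nonzero arrow $a$. One small slip: your assertion $ba \in e_j \Lambda_R(Q) e_i$ is false (e.g.\ $b = a^*$ gives the cycle $a^*a \in e_j \Lambda_R(Q) e_j$), but the conclusion you want --- that $ab$ and $ba$ have no idempotent component --- holds anyway, since both lie in $\Lambda^+_R(Q)$, which meets $RQ_0$ trivially because the defining ideal sits in degree $\geq 2$.
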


\begin{proof}
Let $A$ be any symmetric Frobenius algebra with form $\lambda: A \rightarrow R$. The symmetric condition says that $\lambda$ \emph{vanishes} on the space $[A, A]$. The non-degeneracy condition says that $\lambda$ does \emph{not} vanish on any non-trivial left ideal. Hence if $[A, A]$ contains a non-trivial left ideal then $A$ is not symmetric Frobenius. We will establish this for $A =\Lambda_R(Q)$ using the left ideal $\Lambda^{+}_R(Q)$ generated by the set of arrows, which is non-zero if $Q \neq A_1$.

By Corollaries \ref{cor:HH0_An_R}, \ref{cor:_HH0_Dn}, and \ref{cor:HH_0_Em_R},  $\Lambda_R(Q)_{\text{cyc}} = RQ_0$, which implies $[\Lambda_R(Q),  \Lambda_R(Q)]$ contains the ideal $\Lambda^{+}_R(Q)$, as desired.
\end{proof}

It is well-known that $\Pi_R(Q)$ is symmetric Frobenius if an only if $Q = A_1$ or $\text{char}(R) = 2$ and $Q = D_{2n}$, $E_7$, or $E_8$ for $n \geq 2$. For instance:
\begin{prop} \cite[Subsection 5.3]{Eu08}
Let $Q$ be ADE Dynkin. $\Pi_{\mathbb{Z}}(Q)$ is Frobenius over $\mathbb{Z}$ with Nakayama automorphism squaring to the identity. 
Moreover, if $Q = A_1$, $D_{2n}$, $E_7$, or $E_8$ for $n \geq 2$ then the Nakayama automorphism acts trivially on the vertices and sends each arrow $a$ to $\pm a$. Otherwise, the Nakayama automorphism acts non-trivially on vertices. 
\end{prop}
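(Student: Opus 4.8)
The plan is to establish the stated Frobenius property of $\Pi_{\mathbb{Z}}(Q)$ directly by exhibiting a nondegenerate form, and then to compute the Nakayama automorphism and analyze when it is inner. First I would recall the standard fact that for $Q$ ADE Dynkin, $\Pi_{\mathbb{Z}}(Q)$ is a free $\mathbb{Z}$-module of finite rank, concentrated in path-length degrees $0,1,\dots,h-2$ where $h$ is the Coxeter number, and that the top degree piece $\Pi^{\mathrm{top}}_{\mathbb{Z}}(Q) = \Pi^{h-2}_{\mathbb{Z}}(Q)$ is spanned by cycles. The candidate Frobenius form is the composite $\lambda: \Pi_{\mathbb{Z}}(Q) \to \Pi^{\mathrm{top}}_{\mathbb{Z}}(Q) \to \mathbb{Z}$ that projects to top degree and then sums the coefficients of the top-degree cycles based at each vertex (using that the top-degree component at each vertex $i$, namely $e_i \Pi^{h-2}_{\mathbb{Z}}(Q) e_i$, is free of rank one). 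Nondegeneracy amounts to the statement that the pairing $e_i\Pi^d_{\mathbb{Z}}(Q)e_j \times e_j\Pi^{h-2-d}_{\mathbb{Z}}(Q)e_i \to \mathbb{Z}$ given by multiply-then-project is a perfect pairing of free $\mathbb{Z}$-modules; this is the Calabi--Yau-type self-duality of the preprojective algebra, and I would cite it (e.g.\ from the structure of $\Pi_k(Q)$ for $Q$ Dynkin, which is known to be self-injective over a field, together with flatness over $\mathbb{Z}$ to upgrade to a perfect $\mathbb{Z}$-pairing).

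Next I would identify the Nakayama automorphism $\eta$. Since $\lambda$ is homogeneous (it kills everything not in top degree), $\eta$ is a graded automorphism, hence determined by its action on vertices and arrows; it acts on $Q_0$ by some permutation $\sigma$ and sends each arrow $a \in \overline{Q}_1$ to a scalar multiple of an arrow, compatibly with $\sigma$. The defining relation $\lambda(ab) = \lambda(b\eta(a))$, tested on pairs where $a$ is an arrow $i \to j$ and $b$ ranges over the (free, rank-one) space $e_j\Pi^{h-3}_{\mathbb{Z}}(Q)e_i$, pins down both $\sigma$ and the scalars: $\sigma$ must be the automorphism of $Q$ induced by $-w_0$ on the root lattice (the ``diagram automorphism'' coming from the longest Weyl element), which is trivial exactly for $A_1$, $D_{2n}$, $E_7$, $E_8$ and the order-two flip otherwise, and the scalars are forced to be $\pm 1$ since $\eta$ must preserve the relation $r_{\mathrm{add}}$ and the integral structure. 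That $\eta^2 = \mathrm{id}$ follows because $\sigma^2 = \mathrm{id}$ and the square of the arrow-scalars is $1$; alternatively one cites that the Nakayama automorphism of a preprojective algebra has order dividing $2$.

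The main obstacle, I expect, is the nondegeneracy over $\mathbb{Z}$ rather than over a field: one must be careful that the top-degree pairing is \emph{perfect} (unimodular) and not merely nondegenerate after tensoring with $\mathbb{Q}$. I would handle this by invoking flatness of $\Pi_{\mathbb{Z}}(Q)$ over $\mathbb{Z}$ (so that $\Pi_{\mathbb{Z}}(Q) \otimes k = \Pi_k(Q)$ for every field $k$) together with the known self-injectivity of $\Pi_k(Q)$ for $Q$ Dynkin in \emph{every} characteristic, which forces the reduction mod $p$ of the pairing matrix to be nondegenerate for all $p$, hence the integral pairing matrix has determinant a unit in $\mathbb{Z}$, i.e.\ $\pm 1$. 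The remaining computation of $\sigma$ is a finite check per type (or a clean appeal to the action of $-w_0$), and the sign bookkeeping for the arrows is routine given Remark~\ref{rem:_opposite}, which already records the relevant symmetry $a \mapsto a^*$, $a^* \mapsto -a$.
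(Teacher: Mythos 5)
The paper does not prove this proposition; it is quoted from Eu \cite[Subsection 5.3]{Eu08}, so there is no internal proof to compare against. Your sketch follows the standard route (graded Frobenius form by top-degree projection, unimodularity by reduction modulo every prime, identification of the vertex permutation with the one induced by $-w_0$), and most of it is sound at the level of detail expected.

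There is, however, one genuine flaw in the setup of the form. You define $\lambda$ by summing the coefficients of the top-degree \emph{cycles} at each vertex, justified by the claim that $e_i\Pi^{h-2}_{\mathbb{Z}}(Q)e_i$ is free of rank one. That claim holds precisely when the Nakayama permutation $\sigma$ is trivial. In general the rank-one top-degree components are the spaces $e_i\Pi^{h-2}_{\mathbb{Z}}(Q)e_{\sigma(i)}$, and $e_i\Pi^{h-2}_{\mathbb{Z}}(Q)e_j=0$ for $j\neq\sigma(i)$. So for $A_n$ with $n\geq 2$ (e.g.\ $A_2$, where the top degree is spanned by $a$ and $a^*$ and contains no cycle at all), for $D_{2n+1}$, and for $E_6$, the form as you wrote it vanishes on most (or all) of the top degree and is degenerate. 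The fix is to sum the coefficients over a chosen basis of the full top-degree component $\bigoplus_i e_i\Pi^{h-2}_{\mathbb{Z}}(Q)e_{\sigma(i)}$; there is also a mild circularity to untangle, since you intended to \emph{derive} $\sigma$ from the form but the correct definition of the form already references where the top-degree paths land. With that repaired, the remaining steps (perfectness of the integral pairing from self-injectivity of $\Pi_k(Q)$ in every characteristic, $\sigma=-w_0$ trivial exactly for $A_1$, $D_{2n}$, $E_7$, $E_8$, signs $\pm 1$ on arrows, $\eta^2=\mathrm{id}$) are the content of Eu's computation and are correctly identified, though they are being cited rather than proved, just as in the paper.
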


The Frobenius form projects to top degree and hence factors through $\Pi_{\mathbb{Z}}^{\text{top}}(Q)$ where $\text{top}$ is the maximal non-zero degree in $\Pi_{\mathbb{Z}}(Q)$. One can ask if this Frobenius form remains non-degenerate when viewed as a map $\lambda_{R} : \Pi_{R}(Q) \rightarrow R$ with $\text{char}(R) = 2$. The specific form of the Nakayama automorphism over $\mathbb{Z}$ for $Q = D_{2n}$, $E_7$, and $E_8$ implies that it is the identity over $R$. Hence $\lambda_R$ is a symmetric Frobenius form.  

Consequently, if $\text{char}(R) = 2$, and $Q = A_1$, $ D_{2n}$, $E_7$, or $E_8$, then the Nakayama automorphism is the identity map and $\Pi_R(Q)$ is symmetric Frobenius. We now demonstrate the converse, which is probably well-known.  

\begin{prop} 
Let $R$ be a commutative ring of $\text{char}(R) =2$, and let $Q$ be ADE Dynkin. Then $\Pi_{R}(Q)$ is symmetric Frobenius if and only if $Q = A_1$, $D_{2n}$, $E_7$, $E_8$ with $n \geq 2$.
\end{prop}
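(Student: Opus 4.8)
The plan is to pass from the symmetric Frobenius property to a statement about the Nakayama automorphism, and then obstruct that statement using the path-length grading on $\Pi_R(Q)$.

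First I would record the reduction. By \cite[Subsection 5.3]{Eu08}, $\Pi_{\mathbb{Z}}(Q)$ is Frobenius over $\mathbb{Z}$, hence in particular free of finite rank as a $\mathbb{Z}$-module; since the defining presentation gives $\Pi_R(Q) \cong \Pi_{\mathbb{Z}}(Q) \otimes_{\mathbb{Z}} R$, the Frobenius form $\lambda_{\mathbb{Z}}$ base-changes to a Frobenius form $\lambda_R$ on $\Pi_R(Q)$ whose Nakayama automorphism is $\eta := \eta_{\mathbb{Z}} \otimes_{\mathbb{Z}} R$. By the change-of-Frobenius-structure discussion above, every Frobenius form on $\Pi_R(Q)$ has Nakayama automorphism differing from $\eta$ by an inner automorphism, and a Frobenius form is symmetric exactly when its Nakayama automorphism is the identity; hence $\Pi_R(Q)$ is symmetric Frobenius if and only if $\eta$ is an inner automorphism of $\Pi_R(Q)$.

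The \emph{if} direction is already established above: when $\text{char}(R) = 2$ and $Q = A_1, D_{2n}, E_7, E_8$, the automorphism $\eta$ is the identity (as $a = -a$ in characteristic $2$), so it is trivially inner. For the converse, suppose $Q$ is ADE Dynkin but none of these. Then by \cite[Subsection 5.3]{Eu08} the automorphism $\eta_{\mathbb{Z}}$, and therefore $\eta$, acts non-trivially on the vertices; fix $i_0 \in Q_0$ with $\eta(e_{i_0}) = e_{i_1}$ and $i_1 \neq i_0$. I would then show that no inner automorphism of $\Pi_R(Q)$ can do this. Since $Q$ is Dynkin, $\Pi_R(Q) = RQ_0 \oplus \Pi^{\geq 1}_R(Q)$ with $\Pi^{\geq 1}_R(Q)$ a nilpotent ideal, so every unit $u$ has the form $u = u_0 + u_+$ with $u_0 \in (RQ_0)^{\times}$ and $u_+ \in \Pi^{\geq 1}_R(Q)$, and correspondingly $u^{-1} = u_0^{-1} + (\text{terms of positive degree})$. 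Therefore $u e_{i_0} u^{-1} \equiv u_0 e_{i_0} u_0^{-1} = e_{i_0} \pmod{\Pi^{\geq 1}_R(Q)}$, whereas $e_{i_1} - e_{i_0}$ is a nonzero element of $RQ_0$ (since $R \neq 0$); so $u e_{i_0} u^{-1} \neq e_{i_1}$. Thus $\eta$ is not inner and $\Pi_R(Q)$ is not symmetric Frobenius.

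The steps requiring care are bookkeeping rather than genuine obstacles: the compatibility of the Frobenius form and its Nakayama automorphism with the base change $\mathbb{Z} \to R$ (which rests on $\Pi_{\mathbb{Z}}(Q)$ being finite free over $\mathbb{Z}$), and the nilpotence of $\Pi^{\geq 1}_R(Q)$, so that any unit of $\Pi_R(Q)$ has invertible degree-zero part. I would also note that the \emph{only if} implication in fact holds over every nonzero commutative ring, not only in characteristic $2$; the characteristic hypothesis is used solely to collapse $\eta$ to the identity in the \emph{if} direction.
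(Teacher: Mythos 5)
Your proof is correct, but it takes a genuinely different route from the paper's in the \emph{only if} direction. The paper argues directly with the form: a symmetric form must vanish on $[\Pi_R(Q),\Pi_R(Q)]$, and non-degeneracy forbids vanishing on a nonzero left ideal; when $\eta$ moves the vertex $s(p)$, a longest path $p$ runs from $s(p)$ to $t(p)=\eta(s(p))\neq s(p)$, so $p=[p,e_{t(p)}]$ lies in the commutator subspace while $R\cdot p$ is a nonzero left ideal, and no symmetric Frobenius form can exist. You instead convert the question into ``is $\eta$ inner?'' via the simply transitive action of $\Pi_R(Q)^{\times}$ on Frobenius structures, and then rule out innerness by the degree-zero argument: any unit $u$ has invertible image in $RQ_0$, so $ue_{i_0}u^{-1}\equiv e_{i_0}$ modulo the nilpotent ideal $\Pi_R^{\geq 1}(Q)$, whereas $\eta(e_{i_0})=e_{i_1}\neq e_{i_0}$. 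Both arguments are sound. The paper's is more self-contained: it needs only that the longest path is nonzero and connects $s(p)$ to $\eta(s(p))$, and it never invokes the torsor structure on Frobenius forms. Yours is conceptually cleaner and, as you note, shows the obstruction persists over any nonzero commutative ring; but it leans on two facts you should make sure are available in this generality: (i) that the $\mathbb{Z}$-form is a \emph{perfect} pairing (not merely injective into the dual), so that non-degeneracy survives the base change $\mathbb{Z}\to R$, and (ii) the simply transitive action of units on Frobenius structures over a general commutative ring, which the paper states as a remark without proof and which requires the perfect-pairing form of non-degeneracy. Given that the paper asserts both, your argument is a legitimate and slightly more general alternative.
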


\begin{proof}
As in the first paragraph of the proof of Proposition \ref{prop: sym Frob mult}, we need to show that $[\Pi_R(Q), \Pi_R(Q)]$ contains a non-trivial left ideal. Any longest path $p \in \Pi_R(Q)$ gives a minimal left ideal $\langle p \rangle = R \cdot p$. So $\Pi_R(Q)$ is \emph{not} symmetric Frobenius if there exists a longest path $p \in [\Pi_R(Q), \Pi_R(Q)]$. Indeed, since the longest path $p$ has source $s(p) \in Q_0$ and target $t(p) = \eta(s(p)) \in Q_0$, we need $\eta \! \mid_{Q_0} = \text{id}_{Q_0}$ in order for $p$ to be a cycle and therefore not equal $[p, e_{t(p)}] \in [\Pi_R(Q), \Pi_R(Q)]$.
%Alternatively, one can avoid specific knowledge of the source and target of longest paths by appealing to the computation of $\Pi_R(Q)_{\cyc}$ in Section \ref{sec: HH_0}. Namely, by Remark \ref{rem: Pi(Dn)_cyc}, the only non-zero classes in $\Pi_{R}(Q)_{\cyc}$ have length $4i$ for some $i \in \bN$. Whereas, the longest paths have length $2(n-1)-2 \not \in 4 \bZ$ for $Q= D_n$ with $n>3$ odd, and has length $10 \not \in 4 \bZ$, for $Q=E_6$. The negative result in type A is the same as in Proposition \ref{prop: sym Frob mult} since $\Lambda_R(A_n) = \Pi_R(A_n)$.
\end{proof}

%\begin{rem}
%One can likely show that $\Lambda_R(Q)$ is not Frobenius over any ring $R$ in cases where we have shown it is not symmetric Frobenius over rings of characteristic two. It suffices to show that the Frobenius form on $\Pi_R(Q)$ does not deform to $\Lambda^R(Q)$. Writing $\eta$ for its Nakayama automorphism we can compute the set $[A, \eta(A)] = \{ ab - b \eta(a) : a, b \in A \}$
%\end{rem}

%s7 #&#
\section{Appendix: Magma code for type E}
%%LEAP%%%\label{sec7}
\label{sec:_appendix}

The following Magma code verifies that the given maps for type E descend to
the multiplicative preprojective algebra.

Note that we present $\Pi (E_{6})$ as a free algebra modulo the two-sided
ideal $I$. We added the additional generators alpha, beta, and gamma to
conform with the notation in the paper. Then we write the image of
$r_{\text{mult}}$ in $\Pi (E_{6})$ as rel\_mult and check that it lies in
the ideal $I$.
\begin{Verbatim}[fontsize = \small]
QQ := Rationals();
R_E6<a, b, c, d, e, da, db, dc, dd, de, beta, alpha, gamma>  := FreeAlgebra(QQ, 13);
Arrows := [ a, b, c, d, e]; DualArrows := [da, db, dc, dd, de];
Sources := [4, 2, 5, 1, 6]; Targets:= [3, 3, 3, 2, 5];
R_path := [Arrows[i]*Arrows[j]: i in [1..5], j in [1..5]|Targets[i] ne Sources[j]]
  cat [DualArrows[j]*DualArrows[i] : i in [1..5], j in [1..5]|Targets[i] ne Sources[j]]
  cat  [Arrows[i]*DualArrows[j] : i in [1..5], j in [1..5]|Targets[i] ne Targets[j]]
  cat [DualArrows[i]*Arrows[j] : i in [1..5], j in [1..5]|Sources[i] ne Sources[j]];
R_preproj := [d*dd, dd*d - b*db, gamma + beta + alpha, a*da, de*e -c*dc, e*de];
R_greek :=[ alpha-da*a, beta-db*b, gamma-dc*c];
I := ideal< R_E6 | R_path cat R_preproj cat R_greek>;
r := 1-1/2*alpha -1/12*alpha*gamma -1/24*alpha*gamma^2;
s := 1-1/2*gamma -1/6*alpha*gamma;
t := 1- 1/12*gamma*alpha;
u := 1+1/2*gamma + 1/12*gamma*alpha - 1/24*alpha*gamma*alpha;
rel_mult := (b*r*s*db - dd*d) + (c*t*u*dc - de*e)
    + (alpha+s*beta*r+u*gamma*t +u*gamma*t*s*beta*r);
rel_mult in I;
\end{Verbatim}

Let
$\varphi :\mathbb{Z}[1/2, 1/3]\overline{E_{6}} \rightarrow \Pi _{
\mathbb{Z}[1/2, 1/3]}(E_{6})$ be the map sending $b \mapsto br$,
$b^{*} \mapsto sb^{*}$, $c \mapsto ct$, $c^{*} \mapsto uc^{*}$, that is
the identity on all vertices and other arrows. Since
$\varphi (r_{\text{mult}})$ is in the ideal generated by
$r_{\text{add}}$, it follows that this map descends to a map
$\Lambda _{\mathbb{Z}[1/2, 1/3]}(E_{6}) \rightarrow \Pi _{\mathbb{Z}[1/2,
1/3]}(E_{6})$. Since $r, s, t$ and $u$ are invertible, this map is an isomorphism.
The code and argument for $E_{7}$ is similar:
\begin{Verbatim}[fontsize = \small]
QQ := Rationals();
R_E7<a, b, c, d, e, f, da, db, dc, dd, de, df, beta, gamma, alpha> := FreeAlgebra(QQ, 15);
Arrows := [ a, b, c, d, e, f]; DualArrows := [da, db, dc, dd, de, df];
Sources := [4, 2, 5, 1, 6, 7]; Targets:= [3, 3, 3, 2, 5, 6];
R_path := [ Arrows[i]*Arrows[j]: i in [1..6], j in [1..6] | Targets[i] ne Sources[j]]
  cat [DualArrows[j]*DualArrows[i] : i in [1..6], j in [1..6] | Targets[i] ne Sources[j]]
  cat  [Arrows[i]*DualArrows[j] : i in [1..6], j in [1..6] | Targets[i] ne Targets[j]]
  cat [DualArrows[i]*Arrows[j] : i in [1..6], j in [1..6] | Sources[i] ne Sources[j]];
R_preproj := [d*dd, dd*d - b*db, dc*c+db*b+da*a, a*da, de*e -c*dc, df*f -e*de, f*df];
R_greek :=[ alpha-da*a, beta-db*b, gamma-dc*c];
I := ideal< R_E7 | R_path cat R_preproj cat R_greek>;
x := 1/4*e*c*dc + e; y := 1/4*c*dc*de + de;
r :=  -1/864*b*(alpha*gamma)^3*alpha + 1/432*b*(alpha*gamma)^3
    + 1/288*b*(alpha*gamma)^2*alpha - 1/144*b*(gamma*alpha)^2
    - 1/144*b*alpha*gamma^2*alpha - 1/144*b*(alpha*gamma)^2
    + 1/24*b*alpha*gamma*alpha - 1/12*b*alpha*gamma
    - 1/2*b*alpha + b;
s :=  1/144*(alpha*gamma)^2*db - 1/6*alpha*gamma*db - 1/2*gamma*db + db;
t := -1/6912*c*(alpha*gamma)^3*alpha + 1/432*c*(alpha*gamma)^2*gamma*alpha
    + 7/3456*c*(alpha*gamma)^3 - 1/288*c*gamma*alpha*gamma^2*alpha
    - 1/288*c*(alpha*gamma)^2*gamma + 1/144*c*gamma*alpha*gamma^2
    - 1/72*c*alpha*gamma^2*alpha - 1/144*c*(alpha*gamma)^2
    + 1/24*(c*gamma*alpha*gamma + c*alpha*gamma^2 + c*alpha*gamma*alpha)
    + 1/12*c*gamma^2 - 1/12*c*gamma*alpha + 1/2*c*gamma + c;
u := -1/3456*gamma*alpha*gamma*alpha*gamma*alpha*dc + 1/12*gamma^2*dc
    + 1/12*gamma*alpha*dc + dc;
rel_mult := (r*s - dd*d) + (t*u -y*x) + (x*y - df*f)
    +(alpha+s*r+u*t +u*t*s*r);
rel_mult in I;
\end{Verbatim}

The calculation for $E_{8}$ is similar, but in order to produce code that
runs in under a minute we define an additional truncation function to remove
terms of length greater than 28, which are necessarily zero in
$\Pi (E_{8})$.
\begin{Verbatim}[fontsize = \small]
Truncate := function(p, d);
L := Terms(p);  i:=1;
while i le #L do
    if Degree(L[i]) gt d then L := Remove(L, i); i := i-1; end if;
    i := i+1;
end while;
return &+([0] cat L); end function;

QQ := Rationals();
R_E8<a, b, c, d, e, f, g, da, db, dc, dd, de, df, dg, beta, gamma, alpha> := FreeAlgebra(QQ, 17);
Arrows := [ a, b, c, d, e, f, g]; DualArrows := [da, db, dc, dd, de, df, dg];
Sources := [4, 2, 5, 1, 6, 7, 8]; Targets:= [3, 3, 3, 2, 5, 6, 7];
R_path := [ Arrows[i]*Arrows[j]: i in [1..7], j in [1..7] | Targets[i] ne Sources[j]]
    cat [ DualArrows[j]* DualArrows[i] : i in [1..7], j in [1..7] | Targets[i] ne Sources[j]]
    cat  [ Arrows[i]*DualArrows[j] : i in [1..7], j in [1..7] | Targets[i] ne Targets[j] ]
    cat [DualArrows[i]*Arrows[j] : i in [1..7], j in [1..7] | Sources[i] ne Sources[j] ];
R_preproj := [d*dd, dd*d - b*db, dc*c+db*b+da*a, a*da, de*e -c*dc, df*f -e*de, f*df-dg*g, g*dg];
R_greek :=[ alpha-da*a, beta-db*b, gamma-dc*c];
I := ideal< R_E8 | R_path cat R_preproj cat R_greek>;

C := alpha*gamma; //To visually condense the polynomials
p := 53383/93312000*a*gamma*C^6 + 461/1036800*a*gamma*C^2*gamma*C*alpha
    + 199/129600*a*gamma*C^3*alpha + 1/4320*a*gamma*C^2*gamma*alpha + a;
q := -53383/93312000*gamma*C^6*da - 26639/24883200*C^6*da + 259/691200*C^5*gamma*da
    + 1/2073600*C^5*da + 91/518400*C^4*gamma*da -1/3456*C^2*gamma*C*da + 7/17280*C^3*gamma*da
    + 11/2880*C^3*da - 1/720*C*gamma*C*da + 1/720*C^2*gamma*da +1/144*C^2*da + 1/12*C*da + da;
r := 773/9331200*b*C^6*alpha - 307/2488320*b*C^5*alpha - 17/172800*b*C^3*gamma*C*alpha
    + 2261/3110400*b*C^4*gamma*alpha + 749/3110400*b*C^5 + 301/1036800*b*gamma*C^2*gamma*C*alpha
    + 371/518400*b*C^4*alpha - 1/17280*b*gamma*C^2*gamma*C + 17/103680*b*C^2*gamma*C*alpha
    - 37/57600*b*C^3*gamma*alpha - 13/28800*b*C^4 + 11/17280*b*gamma*C^2*gamma*alpha
    + 7/17280*b*C^3*alpha + 1/1728*b*C*gamma*C*alpha + 1/4320*b*C^2*gamma*alpha - 5/1728*b*C^3
    + 1/144*b*gamma*C*alpha + 1/360*b*C^2 + 1/12*b*gamma*alpha - 1/12*b*C - 1/2*b*alpha + b;
s :=1301/15552000*C^5*gamma*alpha*db + 2431/6220800*gamma*C^4*gamma*alpha*db
    - 2507/3110400*C^4*gamma*alpha*db - 53/518400*gamma*C^4*db - 203/518400*C^4*alpha*db
    - 19/172800*gamma*C^3*alpha*db + 23/51840*C^3*gamma*alpha*db + 199/259200*C^4 *db
    - 1/2160*gamma*C^2*gamma*alpha*db + 1/17280*gamma*C^3*db - 1/2880*C^2*gamma*C*db
    + 1/1080*C^3*alpha*db + 1/216*C^3*db + 1/720*C*gamma*C*db - 1/720*C^2*alpha*db
    - 1/240*gamma^2*C*db - 1/144*C*gamma*alpha*db + 1/24*gamma*C*db - 1/12*gamma*alpha*db
    - 1/6*C*db - 1/2*gamma*db + db;
t := 51683/248832000*c*C^6*alpha - 162703/373248000*c*C^5*gamma*alpha
    - 77/1036800*c*gamma*C^4*gamma - 1/77760*c*C^3*gamma*C*alpha
    + 2447/3110400*c*C^4*gamma*alpha + 37/124416*c*C^5 + 17/1036800*c*C^4*gamma
    + 203/259200*c*C^4*alpha + 23/86400*c*gamma*C^3*alpha - 73/259200*c*C^2*gamma*C*alpha
    - 1/1350*c*C^3*gamma*alpha - 463/518400*c*C^4 + 7/8640*c*gamma*C^3 + 7/17280*c*C^2*gamma*C
    - 7/17280*c*C^3*alpha + 1/4320*c*(gamma*C)^2 - 1/2160*c*gamma*C^2*alpha
    - 1/2160*c*C*gamma*C*alpha + 1/1080*c*C^2*gamma*alpha - 1/270*c*C^3
    + 1/720*c*gamma*C*gamma*alpha + 1/720*c*C*gamma*C + 1/240*c*gamma*C*gamma
    + 1/360*c*gamma*C*alpha + 1/720*c*C*gamma*alpha + c;
u := 7853/18662400*gamma*C^5*alpha*dc - 1157/3110400*gamma*C^5*dc + 5833/12441600*C^5*gamma*dc
    - 59/12441600*C^5*alpha*dc - 97/1036800*gamma*C^2*gamma*C*alpha*dc - 37/518400*gamma*C^4*dc
    - 17/16200*C^4 *alpha*dc - 49/259200*gamma*C^3*alpha*dc + 7/34560*C^2*gamma*C*alpha*dc
    - 7/34560*C^3*gamma*alpha*dc - 7/17280*gamma*C^2*gamma*alpha*dc - 7/8640*gamma*C^3*dc
    + 47/17280*C^3*gamma*dc - 1/432*C^2*gamma*alpha*dc + 1/432*C^3*dc
    + 1/1440*gamma*C*gamma*alpha*dc + 1/1440*gamma*C^2*dc + 1/480*C^2*gamma*dc
    - 1/240*gamma*C*gamma*dc - 1/360*gamma*C*alpha*dc - 1/720*C*gamma*alpha*dc
    - 1/24*gamma^2*alpha*dc - 1/24*gamma*C*dc - 1/24*C*gamma*dc - 1/24*C*alpha*dc
    + 1/6*gamma^2*dc + 1/2*gamma*dc + dc;
v := -1/9953280*d*b*C^5*gamma*alpha*db + 7/17280*d*b*C^2*gamma*C*db
    - 1/5760*d*b*C^2*gamma*alpha*db + d;
w := 1/5760*b*C^2*gamma*alpha*db*dd + 1/1728*b*C^2*alpha*db*dd + dd;
x := 13/2073600*e*c*C^4*alpha*dc + 13/345600*e*c*C^2*gamma*C*alpha*dc
    + 13/172800*e*c*(gamma*C)^2*alpha*dc + 19/172800*e*c*C^3*gamma*dc + e;
y :=  -19/172800*c*C^3*gamma*dc*de + 13/172800*c*C^3*alpha*dc*de
    + 1/6*c*gamma*dc*de + 1/2*c*dc*de + de;
z := 13/172800*f*e*c*C*gamma*C*alpha*dc*de + f;
h := 13/345600*e*c*C^3*alpha*dc*de*df - 13/172800*e*c*C*gamma*C*alpha*dc*de*df + 1/2*e*de*df + df;

rel_mult := (v*w) + (r*s - w*v) + (p*q) + (t*u -y*x) + (x*y - h*z) + (z*h - dg*g) +(g*dg)
    + Truncate(q*p+s*r+u*t +Truncate(u*t, 14)*Truncate(s*r, 14), 16);
rel_mult in I;
\end{Verbatim}
%\section*{CRediT authorship contribution statement}

%DataAvailability
\vspace{-5pt}


\begin{thebibliography}{alpha}


\bibitem{BK16}
Roman Bezrukavnikov and Mikhail Kapranov. Microlocal sheaves
and quiver varieties. Ann. Fac. Sci. Toulouse Math. (6)
25(2-3) (2016) 473--516.

\bibitem{CB13}
William Crawley-Boevey. Monodromy for systems of vector bundles
and multiplicative preprojective algebras. \emph{Bull. Lond. Math.
Soc.}, 45(2):309--317, 2013.

\bibitem{CBH98}
William Crawley-Boevey and Martin~P. Holland. Noncommutative
deformations of Kleinian singularities. \emph{Duke Math. J.},
92(3):605--635, 1998.

\bibitem{CBS06}
William Crawley-Boevey and Peter Shaw. Multiplicative preprojective
algebras, middle convolution and the Deligne-Simpson problem.
\emph{Adv. Math.}, 201(1):180--208, 2006.

\bibitem{EL17}
T.~Etg\"{u} and Y.~Lekili. Koszul duality patterns in Floer
theory. \emph{Geom. Topol.}, 21(6):3313--3389, 2017.

\bibitem{EL19}
Tolga Etg\"{u} and Yanki Lekili. Fukaya categories of plumbings
and multiplicative preprojective algebras. \emph{Quantum Topol.},
10(4):777--813, 2019. (preprint available at
\url{https://arxiv.org/abs/1703.04515v4}).

\bibitem{Eu08}
Ching-Hwa Eu. The product in the Hochschild cohomology ring
of preprojective algebras of Dynkin quivers. \emph{J. Algebra},
320(4):1477--1530, 2008.

\bibitem{Ganatra12}
Sheel Ganatra. \emph{Symplectic Cohomology and Duality for
the Wrapped Fukaya Category}. ProQuest LLC, Ann Arbor, MI, 2012. Thesis (Ph.D.)--Massachusetts Institute of Technology.

\bibitem{KS20}
Daniel Kaplan and Travis Schedler. Multiplicative preprojective
algebras are 2-Calabi--Yau,  2020.

\bibitem{LU21}
Y. Lekili\ and\ K. Ueda, Homological mirror symmetry for Milnor fibers of simple singularities, 
Algebr. Geom. {\bf 8} (2021), no.~5, 562--586. 

\bibitem{Malcev42}
A.~Malcev. On the representation of an algebra as a direct sum
of the radical and a semi-simple subalgebra. \emph{C. R. (Doklady)
Acad. Sci. URSS (N.S.)}, 36:42--45, 1942.

\bibitem{MOV06}
Anton Malkin, Viktor Ostrik, and Maxim Vybornov. Quiver varieties
and Lusztig's algebra. \emph{Adv. Math.}, 203(2):514--536, 2006.

\bibitem{Schedler16}
Travis Schedler. Zeroth Hochschild homology of preprojective
algebras over the integers. \emph{Adv. Math.}, 299:451--542, 2016.

\bibitem{Shaw05}
Peter Shaw. Generalisations of preprojective algebras.
Ph.D. thesis, University of Leeds- available at
\url{https://www.math.uni-bielefeld.de/~wcrawley/shaw-thesis.pdf}, 2005.

\bibitem{VdB_duality}
Michel Van~den Bergh. A relation between Hochschild homology
and cohomology for Gorenstein rings. \emph{Proc. Amer. Math.
Soc.}, 126(5):1345--1348, 1998.

\bibitem{VdB_Double}
Michel Van~den Bergh. Double Poisson algebras. \emph{Trans.
Amer. Math. Soc.}, 360(11):5711--5769, 2008.

\bibitem{Yamakawa08}
Daisuke Yamakawa. Geometry of multiplicative preprojective algebra.
\emph{Int. Math. Res. Pap. IMRP}, pages Art. ID rpn008, 77pp,
2008.

\end{thebibliography}
\end{document}